\newcommand{\field}[1]{\mathbb{#1}}
\newcommand{\N}{\field{N}}
\newcommand{\Z}{\field{Z}}
\newcommand{\R}{\field{R}}
\newcommand{\Q}{\field{Q}}
\newcommand{\SL}{\operatorname{SL}}
\def \a{\alpha}
\newcommand{\bea}{\begin{eqnarray}}
\newcommand{\eea}{\end{eqnarray}}
\newcommand{\be}{\begin {equation}}
\newcommand{\ee}{\end{equation}}
\newcommand{\n}{\frak n}
\newcommand{\la}{\langle}
\newcommand{\ra}{\rangle}
\newcommand{\ord}{\operatorname{ord}}
\newcommand{\gen}{\text{gen}}
\newcommand{\legendre}[2]{\left( \frac{#1}{#2} \right)}
\newcommand{\pfrac}[2]{\left( \frac{#1}{#2} \right)}
\newif \ifdetails 
\newif\ifdiscs 
\newenvironment{extradetails}{\ifdetails \noindent  \newline \noindent \bf
************************** Begin Extra Details **************************
\rm \\ \bf\fi } {\ifdetails \noindent \newline
\noindent \bf  ************************** End Extra Details ***************************\rm\\ \fi }
\numberwithin{equation}{section}
\numberwithin{table}{section}
\newtheorem{theorem}{\textbf{Theorem}}
\numberwithin{theorem}{section}
\newtheorem{lemma}[theorem]{\textbf{Lemma}}
\newtheorem{proposition}[theorem]{\textbf{Proposition}}
\newtheorem{remark}[theorem]{Remark}
\newtheorem{corollary}[theorem]{\textbf{Corollary}}
\theoremstyle{remark}
\newtheorem{example}[theorem]{\textbf{Example}}
\renewcommand{\pmod}[1]{\, \left(  \mathrm{mod} \,  #1 \right)}
\begin{document}
\title{Explicit Class number formulas for Siegel--Weil averages of ternary quadratic forms}
\author{Ben Kane}
\address{Department of Mathematics, University of Hong Kong, Pokfulam, Hong Kong}
\email{bkane@hku.hk}
\author{Daejun Kim}
\address{School of Mathematics, Korea Institute for Advanced Study, Seoul 02455, Korea}
\email{dkim01@kias.re.kr}
\author{Srimathi Varadharajan}
\address{Department of Mathematics, University of Hong Kong, Pokfulam, Hong Kong}
\email{srimathi1388@gmail.com}
\thanks{The research of the first author was supported by grants from the Research Grants Council of the Hong Kong SAR, China (project numbers HKU 17301317 and 17303618). This work of the second author was supported by Basic Science Research Program through NRF funded by the Minister of Education (NRF-2020R1A6A3A03037816) while his staying at HKU as an honorary research associate, and by a KIAS Indivisual Grant (MG085501) at Korea Institute for Advanced Study.}
\keywords{Ternary quadratic forms, class numbers, Siegel--Weil averages, Watson transformations}
\subjclass[2010]{11E20,11E41,11H55}
\date{\today}
\begin{abstract}
In this paper, we investigate the interplay between positive-definite integral ternary quadratic forms and class numbers. We generalize a result of Jones relating the theta function for the genus of a quadratic form to the Hurwitz class numbers, obtaining an asymptotic formula (with a main term and error term away from finitely many bad square classes $t_j\Z^2$) relating the number of lattices points in a quadratic space of a given norm with a sum of class numbers related to that norm and the squarefree part of the discriminant of the quadratic form on this lattice. 
\end{abstract}
\maketitle

\section{Introduction and statement of results}
Sums of squares and representations of integers by quadratic forms have a long and storied history. Consider, for example, the Euclidean norm in three-dimensional space, which for a vector $\bm{x}=(x_1,x_2,x_3)\in\R^3$ is given by 
\[
\|\bm{x}\|^2:=\sum_{j=1}^3 x_j^2.
\]
If one restricts to the lattice $\Z^3$, then counting the number of points on this lattice of norm $n$ is equivalent to solving the Diophantine equation 
\begin{equation}\label{eqn:sum3squares}
\sum_{j=1}^{3}x_j^2=n.
\end{equation}
Of course, one can also count points on other lattices in an analogous way. Counting the number of points on the lattice $\sqrt{2}\Z\times\Z^2$ of norm $n$ yields the Diophantine equation 
\[
2x_1^2+x_2^2+x_3^2=n.
\]
Fermat conjectured in 1638 that every positive integer may be written as the sum of four squares and three triangular numbers $T_x=\frac{x(x+1)}{2}$ (which is equivalent to \eqref{eqn:sum3squares} with $n\mapsto 8n+3$ by completing the square), which can be interpreted in Euclidean space via the above geometric relation. These statements were proven in celebrated works of Lagrange in 1770 and Gauss in 1796. Gauss went further, giving the following precise count of the number of solutions $r_3(n)$ to \eqref{eqn:sum3squares} in terms of the the Hurwitz class number $H(N)$ ($N>0$) associated to quadratic orders in the ring of integers in $\Q(\sqrt{-N})$ (see \eqref{def-Hurwitz-class-num} for a definition):
\begin{equation}\label{eqn:Gauss}
r_3(n)=\begin{cases}
12H(4n)&\text{if }n\equiv 1,2\pmod{4},\\
24H(n)&\text{if }n\equiv 3\pmod{8},\\
r_3\left(\frac{n}{4}\right)&\text{if }4\mid n,\\
0&\text{otherwise}.
\end{cases}
\end{equation}
Formulas like \eqref{eqn:Gauss} for the number of solutions to more general Diophantine equations resembling \eqref{eqn:sum3squares} are somewhat special in a sense, but also not entirely isolated in another sense. To describe this, we recall that a \begin{it}positive-definite integer-valued quadratic form\end{it} is a homogeneous polynomial $Q:\Z^{\ell}\to\N_0:=\N\cup\{0\}$ of degree two that satisfies $Q(\bm{x})=0$ if and only if $\bm{x}=\bm{0}$. Hence for some $a_{i,j}\in\Z$, one can write 
\[
Q(\bm{x})=\sum_{1\leq i\leq j\leq\ell} a_{i,j} x_ix_j.
\]
We restrict to the case that $\ell=3$ (the ternary case) and we also further restrict our consideration to quadratic forms that satisfy $a_{i,j}\in 2\Z$ for $i\neq j$ (these are called \begin{it}integral\end{it} quadratic forms). For $n\in\N$, we let
\[
r_Q(n):=\#\{\bm{x}\in\Z^{\ell}: Q(\bm{x})=n\}
\]
 denote the number of solutions $\bm{x}\in\Z^{\ell}$ to $Q(\bm{x})=n$. Although equations like \eqref{eqn:Gauss} are ``rare'' for $r_Q(n)$ in general, one expects that they satisfy formulas that ``approximate'' \eqref{eqn:Gauss}. Namely, $r_Q(n)$ should essentially satisfy a formula like \eqref{eqn:Gauss} up to an error that in general grows slower than the class numbers as $n\to\infty$. Taking a natural weighted average $r_{\gen(Q)}(n)$ (see \eqref{eqn:SiegelWeil}), known as the Siegel--Weil average, over the classes in what is called the \begin{it}genus\end{it} $\gen(Q)$ (roughly speaking, those quadratic forms that cannot be distinguished from $Q$ by looking $p$-adically one prime at a time; one cannot $p$-adically distinguish $x_1^2+4x_2^2+9x_3^2$ from $x_1^2+x_2^2+36x_3^2$, for example), one expects that $r_{\gen(Q)}(n)$ is related to class numbers in a manner similar to \eqref{eqn:Gauss}, while for $n$ away from finitely many ``bad'' square classes (i.e., $n\notin t_j\Z^2$ for some finite set of squarefree positive integers $\{t_1,\dots,t_r\}$), Duke and Schulze-Pillot \cite{DukeSchulzePillot} have proven that 
\begin{equation}\label{eqn:DukeSchulzePillot}
r_Q(n)=r_{\gen(Q)}(n) + O\left(n^{\frac{3}{7}+\varepsilon}\right). 
\end{equation}
Moreover, Jones \cite[Theorem 86]{Jones} has proven that for $n$ relatively prime to twice the \begin{it}discriminant\end{it} (the determinant of the matrix whose $(i,j)$-th component is $\frac{1+\delta_{i=j}}{2} a_{i,j}$) $d_Q$, the numbers $r_{\gen(Q)}(n)$ are proportional to certain class numbers associated to binary quadratic forms of discriminant $-Dn$ and Siegel \cite{SiegelQuadratische} has proven that the class numbers grow faster than $(Dn)^{\frac{1}{2}-\varepsilon}$. So for $n$ away from finitely many bad square classes (these are known as spinor exceptional square classes due to a connection to the spinor norm map when considering the algebraic theory of quadratic lattices), $r_Q(n)$ satisfies a formula like \eqref{eqn:Gauss} up to a ``small'' error for those $n$ relatively prime to $2d_Q$. The error is zero in the case of \eqref{eqn:Gauss} because the quadratic form given by the sum of three squares lies in a genus with only one class, and hence the average $r_{\gen(Q)}(n)$ equals $r_Q(n)$. 
 
As noted in \cite[(1.3)]{BKTernaryQF}, one can uniformly write \eqref{eqn:Gauss} as 
\[
r_3(n)=12H(4n)-24H(n).
\]
A number of similar formulas relating $r_Q(n)$ with sums of class numbers are found in \cite[Lemmas 4.1 to 4.56]{BKTernaryQF} when $Q$ is diagonal (i.e., $a_{i,j}=0$ for $i\neq j$) quadratic forms that lie in a genus with only one class. In \cite{BKTernaryQF}, analytic techniques were used to obtain the formulas one-by-one. In this paper, we employ algebraic techniques in order to obtain these types of approximate formulas.
\begin{theorem}\label{thm:sumclassnumapproximate}
Let $Q$ be an integral positive-definite ternary quadratic form and let $s_Q$ be the squarefree part of $d_Q$. Then there exist $M\in\N$ and constants $a_{Q,f}(m)$ and $c_Q(m)$ only depending on $m$ modulo $M$ such that for $n$ not in any of the finitely-many spinor exceptional square classes $t_j\Z^2$ we have 
\[
r_Q(n)=c_Q(n)\sum_{f\mid 2d_Q} a_{Q,f}(n)\cdot H\pfrac{4s_Qn}{f^2}+O\left(n^{\frac{3}{7}+\varepsilon}\right).
\]
In particular,
\[
r_{\gen(Q)}(n)= c_Q(n)\sum_{f\mid 2d_Q} a_{Q,f}(n)\cdot H\pfrac{4s_Qn}{f^2}.
\]
\end{theorem}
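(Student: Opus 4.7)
The plan is to combine the Duke--Schulze-Pillot bound \eqref{eqn:DukeSchulzePillot} with Jones's class number formula (\cite[Theorem 86]{Jones}) and then bridge the gap at primes dividing $2d_Q$ using Watson's transformations (already signalled by the paper's keywords). The first step is to observe that \eqref{eqn:DukeSchulzePillot} already produces the error $O(n^{3/7+\varepsilon})$ once $n$ avoids spinor exceptional square classes, so it suffices to prove the second (exact) displayed formula for $r_{\gen(Q)}(n)$; the first formula then follows for free. For $n$ coprime to $2d_Q$ this exact formula is essentially Jones's theorem, which supplies a single class-number term $c_Q(n)\cdot H(4s_Qn)$ with coefficients depending only on $n\pmod{4d_Q}$ via genus characters.

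The core of the argument is handling the primes $p\mid 2d_Q$. For each such $p$, I would use the Watson operator $\lambda_p$, which sends a ternary form $Q$ to a form $\lambda_p(Q)$ of strictly smaller $p$-adic discriminant and whose genus representation numbers at $n$ can be expressed locally as a linear combination of $r_{\gen(Q)}(n)$, $r_{\gen(Q)}(p^{-2}n)$ (when $p^2\mid n$), and in some cases $r_{\gen(Q)}(p^2 n)$, with coefficients determined by the $p$-adic Jordan decomposition of $Q$ and by $n\pmod{p^k}$ for a bounded $k$. Inverting this local relation expresses $r_{\gen(Q)}(n)$ as a finite combination of $r_{\gen(\lambda_p(Q))}$-values evaluated at shifted arguments. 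I would iterate this prime-by-prime and descend inductively on $d_Q$ until the discriminant of the transformed form is coprime to the $p$-part one is trying to eliminate, at which stage Jones's formula applies and yields class numbers of the shape $H(4s_Q n/f^2)$ for various $f\mid 2d_Q$; the factor $s_Q$ is preserved because $\lambda_p$ changes the discriminant only by even powers of $p$, and each descent step contributes a new $f$.

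Two bookkeeping tasks will consume most of the work. First, I have to verify that after all descents, every class number argument appearing is indeed of the form $4s_Qn/f^2$ with $f\mid 2d_Q$, and combine the coefficients into the stated sum; here the key is that $\lambda_p$ acts by substitutions of the form $n\mapsto p^{\pm 2}n$, so the conductor shifts are exactly by divisors of $2d_Q$. Second, the coefficients $a_{Q,f}(n)$ and $c_Q(n)$ must be shown to depend only on $n$ modulo some fixed $M$; since each $\lambda_p$-step splits $n$ into residue classes modulo a bounded power of $p$, one may take $M$ to be the product of those local moduli over $p\mid 2d_Q$, which is a finite quantity independent of $n$.

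The main obstacle, as I see it, is the local-to-global tracking through the Watson descent: one must confirm that the various Watson operators at different primes interact cleanly (they act on disjoint local components of the form and therefore commute up to isomorphism of genera), and that spinor exceptional square classes of $Q$ map to spinor exceptional square classes of $\lambda_p(Q)$, so that the hypothesis $n\notin t_j\Z^2$ is preserved throughout the induction. Once this compatibility is established, the assembly of the coefficients $a_{Q,f}(n)$ and $c_Q(n)$ from the $p$-adic data is a bounded and essentially mechanical computation.
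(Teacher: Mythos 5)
Your overall architecture (reduce to the exact genus identity via \eqref{eqn:DukeSchulzePillot}, then descend at primes dividing $2d_Q$ by Watson-type transformations) matches the paper's, but there is a genuine gap at the base of the descent. Watson transformations only lower $\ord_p(d_L)$ while the lattice is \emph{not} stable at $p$; once the reduced lattice is stable (so $d$ is essentially squarefree, but typically still divisible by several odd primes and possibly $2$), the operators $\lambda_p$ satisfy $\lambda_p^2(L)=L$ and give no further reduction, and Jones's theorem does not apply unless $(n,2d)=1$. So your terminal case ``Jones supplies $c_Q(n)H(4s_Qn)$'' covers only $n$ coprime to the reduced discriminant, whereas the theorem must hold for $n$ divisible by arbitrarily high powers of the primes $p\mid s_Q$. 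This is exactly where the bulk of the paper's work lies: Theorem \ref{mainthm} (and its extension, Theorem \ref{formula-n,P-coprime}) proves the multi-term identity $r(n,\gen(L))=c\sum_{f\mid\mathfrak{P}}\bigl[\prod_{p\mid f}S_p^\ast(L)\bigr]fH\pfrac{e d_L n}{f^2}$ for stable $L$ and \emph{all} locally represented $n$, by (i) evaluating $r(n,\gen(L))$ for essentially squarefree $n$ via the Minkowski--Siegel formula and Dirichlet's class number formula (Lemma \ref{ternary-generic-theta}), and (ii) propagating up square classes $n\mapsto nq^2$ by matching ratios of local densities with ratios of Hurwitz class numbers (Propositions \ref{prop1}--\ref{prop3}, resting on the explicit local density formulas of Lemmas \ref{localdensities-stable1}--\ref{localdensities-stable2} and the recursion of Corollary \ref{cor-Hurwitz-reduce-q}). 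Your proposal contains no mechanism for this step, and without it the claimed formula--with coefficients depending only on $n$ modulo a fixed $M$--cannot be established for $n$ highly divisible by primes dividing $s_Q$.

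Two further points. First, your bookkeeping justification ``$\lambda_p$ changes the discriminant only by even powers of $p$'' is false: in the paper's reduction formulas (Theorem \ref{reduction-formulas}) one has $d_{\lambda_p(L)}=d_L/p$ or $d_K=d_L/p$ in several cases, so the squarefree part itself can change by $p$; the arguments $4s_Qn/f^2$ only come out right because the odd-power change of the discriminant is compensated by the shift $n\mapsto n/p$, which your sketch does not track. Moreover, in the isotropic case $L_p\cong\left(\begin{smallmatrix}0&1\\1&0\end{smallmatrix}\right)\perp\la p^m\varepsilon\ra$ a pure Watson descent does not yield an invertible relation at all: the paper needs the Ju--Lee--Oh generalization, introducing an auxiliary genus $K$ defined only locally together with a mass computation showing $2p\,w(K)/w(L)=2$, giving the two-term recursion $r(epn,\gen(L))=2r(en,\gen(K))-r(en/p,\gen(\lambda_{ep}(L)))$. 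Second, your worry about spinor exceptional square classes being preserved through the induction is moot: the exact genus identity is proved for all $n$, and the spinor exceptional classes enter only at the very end through the Duke--Schulze-Pillot approximation.
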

\begin{remark}
The assumption that $Q$ is integral (i.e., $a_{i,j}\in 2\Z$ for $i\neq j$) can be removed by noting that $r_Q(n)=r_{2Q}(2n)$, yielding a result for what are known as integer-valued quadratic forms.
\end{remark}
The constants $a_{Q,f}(m)$ and $c_Q(m)$ in Theorem \ref{thm:sumclassnumapproximate} are recursively constructed by applying a certain reduction theory using transformations of Watson from quadratic forms to ``simpler'' quadratic forms coming from what are known as \begin{it}stable lattices\end{it}. In practice, one can compute these constants given an explicit form $Q$ (see Example \ref{ex:arbitrary}, where the case $Q(\bm{x})=x_1^2+x_2^2+75x_3^2$ is worked out in detail), but it is difficult to obtain a general formula for these constants due to their recursive definition. However, for the ``base case'' of a quadratic form coming from a stable lattice (see \eqref{def-quadform-assoc-lattice} for a definition of a quadratic form coming form a lattice), we obtain the constants explicitly. To state the theorem, for $Q$ coming from a stable lattice $L$, we set (see \eqref{defn-mathfrakP} for a definition of $\mathfrak{P}$ for arbitrary lattices/quadratic forms)
\[
\mathfrak{P}=\mathfrak{P}_Q:=2^{1-2\ord_2(d_Q)} d_Q,
\]
we set $e_Q:=4$ if $L$ is an odd lattice and $e_Q:=1$ if $L$ is an even lattice (see Section \ref{sec:prelim} for a definition of even and odd lattices), and for a prime $p$ we let $S_p^*(L)$ denote a modified \begin{it}Hasse symbol\end{it} defined in \eqref{HasseSymbol-IsoAniso} (which takes values in $\{\pm 1\}$).
\begin{theorem}\label{mainthm}
	Let $Q$ be a positive-definite integral ternary quadratic form associated to a stable lattice $L$. Then for any $n\in \N$ not contained in the finitely-many spinor exceptional square classes $t_j\Z^2$, we have
	\[
	r_Q(n)= 12\prod_{p\mid \mathfrak{P}} \left(p+S_p^\ast(L)\right)^{-1} \times \sum_{f\mid \mathfrak{P}} \left[\prod_{p\mid f} S_p^\ast(L) \right] \cdot f \cdot H\pfrac{e_Q d_Q n}{f^2}+O\left(n^{\frac{3}{7}+\varepsilon}\right).
	\]
In particular, 
\[
r_{\gen(Q)}(n)= 12\prod_{p\mid \mathfrak{P}} \left(p+S_p^\ast(L)\right)^{-1} \times \sum_{f\mid \mathfrak{P}} \left[\prod_{p\mid f} S_p^\ast(L) \right] \cdot f \cdot H\pfrac{e_Q d_Q n}{f^2}.
\]
\end{theorem}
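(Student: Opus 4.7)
The plan is to start from the Duke--Schulze-Pillot bound \eqref{eqn:DukeSchulzePillot}, which already absorbs $r_Q(n)-r_{\gen(Q)}(n)$ into the target error $O(n^{3/7+\varepsilon})$; it therefore suffices to evaluate $r_{\gen(Q)}(n)$ in closed form. For this, I would appeal to the Siegel--Weil formula, which writes $r_{\gen(Q)}(n)$ (up to an archimedean factor) as a convergent product $\prod_p \alpha_p(Q,n)$ of local representation densities. The bulk of the argument is then to turn this Euler product into the sum over divisors $f\mid\mathfrak{P}$ that appears in the statement.

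The natural benchmark is the sum-of-three-squares form $x_1^2+x_2^2+x_3^2$, whose genus consists of a single class and whose representation numbers are given by Gauss's formula \eqref{eqn:Gauss} in terms of $H$. At each prime $p\nmid\mathfrak{P}$, $\alpha_p(Q,n)$ differs from $\alpha_p(x_1^2+x_2^2+x_3^2,\,e_Q d_Q n)$ only by a Kronecker-symbol twist, which matches exactly the $p$-Euler factor of the $L$-series underlying $H(e_Q d_Q n)$. At each prime $p\mid\mathfrak{P}$, the hypothesis that $L$ is stable forces the $p$-adic Jordan decomposition into a single normalised form, so $\alpha_p(Q,n)$ reduces to a short closed expression depending only on $S_p^\ast(L)\in\{\pm1\}$ and on $\ord_p n$.

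The central calculation is then purely local at each prime $p\mid\mathfrak{P}$: using the recursion satisfied by $H$ at prime-power divisors of its argument, I would rewrite this short closed form for $\alpha_p(Q,n)$ as a combination of the local factors of $H(e_Q d_Q n)$ and $H(e_Q d_Q n/p^2)$, weighted respectively by $(p+S_p^\ast(L))^{-1}$ and by $S_p^\ast(L)\cdot p\cdot(p+S_p^\ast(L))^{-1}$. Since both sides are multiplicative in the primes dividing $\mathfrak{P}$, assembling these identities and pairing them with the unchanged prime-to-$\mathfrak{P}$ Euler factors (which reassemble $H(e_Q d_Q n)$ itself) immediately produces the sum over $f\mid\mathfrak{P}$ in the statement, with the overall constant $12$ emerging from the archimedean factor.

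The main obstacle I anticipate is the $2$-adic accounting. The very definition $\mathfrak{P}=2^{1-2\ord_2 d_Q}d_Q$ strips most of the $2$-part of $d_Q$, and the choice $e_Q\in\{1,4\}$ according to whether $L$ is even or odd is tuned precisely so that $e_Q d_Q n$ is the correct argument for the Hurwitz class number. Ensuring that these two choices conspire to yield a single uniform formula---rather than Gauss's original three-case split modulo $4$---requires a careful analysis of the $2$-adic Jordan decomposition of a stable lattice, together with an examination of which square classes modulo $\mathfrak{P}$ can be spinor exceptional and are therefore excluded by the hypothesis $n\notin t_j\Z^2$. The odd primes $p\mid\mathfrak{P}$, neatly partitioned by $S_p^\ast(L)=\pm1$ into isotropic and anisotropic types, should then follow a uniform and essentially routine pattern.
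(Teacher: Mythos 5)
Your plan is correct and is essentially the paper's own route: the bound \eqref{eqn:DukeSchulzePillot} reduces everything to the genus average, which is then evaluated via the Minkowski--Siegel formula, the explicit local densities at the stable primes (Lemmas \ref{localdensities-stable1} and \ref{localdensities-stable2}), and the Hurwitz class number recursion (Corollary \ref{cor-Hurwitz-reduce-q}), with exactly the local conversion you describe at $p\mid\mathfrak{P}$ appearing in the paper as \eqref{mainthm:eq2} and \eqref{mainthm:eq4}. The only differences are organizational: the paper first reduces to squarefree $n$ via the ratio Propositions \ref{prop1}--\ref{prop3} (treating locally non-represented $n$ separately) and then handles the base case through Dirichlet's class number formula in Lemma \ref{ternary-generic-theta}, whereas you run a single prime-by-prime matching for all $n$ at once; note also that the local densities at $p\mid\mathfrak{P}$ depend on the unit part of $n$ modulo $p$ (or modulo $8$), not only on $S_p^\ast(L)$ and $\ord_p n$, but this dependence is mirrored by the Kronecker symbol in the class-number factors, so the matching still goes through.
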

The paper is organized as follows. In Section \ref{sec:prelim}, we introduce the theory of quadratic lattices needed for our purposes, including local densities and the Siegel--Weil average $r_{\gen(Q)}(n)$, stable lattices, and reduction theory via Watson transformations. In Section \ref{sec:stable}, we investigate the Siegel--Weil average on quadratic forms related to stable lattices, proving Theorem \ref{mainthm}. Section \ref{section-formula-reduction} is devoted to the recursive formulas coming from reduction from arbitrary lattices to stable lattices and the proof of Theorem \ref{thm:sumclassnumapproximate}. We conclude the paper by demonstrating how to apply the theory to some explicit examples in Section \ref{sec:examples}.

\section{Preliminaries}\label{sec:prelim}

\subsection{Quadratic forms and quadratic lattices}\label{sec:quadlattice}
We adopt the language of quadratic spaces and lattices. Any unexplained notation and terminology can be found in \cite{KiBook} or \cite{OMBook}. We let $V$ be a \begin{it}quadratic space\end{it} over $\Q$, which is a $\Q$-vector space with an associated non-degenerate symmetric bilinear form $B$ such that $B(\bm{u},\bm{v})\in \Q$ for all $\bm{u},\bm{v}\in V$. The corresponding quadratic map is defined by $Q(\bm{x})=B(\bm{x},\bm{x})$ for any $\bm{x}\in V$. A \begin{it}quadratic $\Z$-lattice\end{it} is a free $\Z$-module  
\[
L=\Z \bm{v_1} + \Z \bm{v_2} + \cdots + \Z \bm{v_k}
\]
of rank $k$, where $\bm{v_1},\dots,\bm{v_k}\in V$ is a basis of $V$. The $k\times k$ symmetric matrix $A_L:=(B(\bm{v_i},\bm{v_j}))$ is called the \begin{it}Gram matrix\end{it} of $L$ with respect to a basis $\bm{v_1},\ldots, \bm{v_k}$, and we write $L \cong (B(\bm{v_i},\bm{v_j}))$. The discriminant $d_L$ is defined to be the determinant of $(B(\bm{v_i},\bm{v_j}))$. Any $\bm{x}\in L$ may be written as $\bm{x}=\sum_{j=1}^k x_j\bm{v_j}$ for some $(x_1,\ldots,x_k)^T\in\Z^k$, and by a {\em quadratic form associated to} (or coming from) $L$ (with respect to a basis $\{\bm{v_1},\ldots,\bm{v_k}\}$), we mean 
\begin{equation}\label{def-quadform-assoc-lattice}
	Q_L(x_1,\ldots,x_k):=Q(\bm{x})=\sum_{1\le i, j \le k} B(\bm{v_i},\bm{v_j})x_ix_j.
\end{equation}
Note that one may also write the quadratic form in terms of the Gram matrix via 
\[
Q_{L}(x_1,\ldots,x_k)=(x_1,\ldots,x_k) A_L (x_1,\ldots,x_k)^T. 
\]
The quadratic form $Q_L$ is integral precisely when the Gram matrix $A_L$ has integer entries and integer-valued when the diagonal entries of $A_L$ are integers and the other entries lie in $\frac{1}{2}\Z$. If $B(\bm{v_i},\bm{v_j})=0$ for any $i\neq j$ (equivalently, if the Gram matrix is diagonal), then we simply write $L=\la Q(\bm{v_1}), \ldots , Q(\bm{v_k}) \ra$. More generally, if $L=L_1+\dots +L_r$ for sublattices $L_1,\dots, L_r$ with $B(\bm{x_j},\bm{x_m})=0$ for every $\bm{x_j}\in L_j$ and $\bm{x_m}\in L_m$ whenever $j\neq m$, then we write 
\[
L=L_1\perp L_2\perp\cdots\perp  L_r.
\]
Note that $L=\la Q(\bm{v_1}), \ldots , Q(\bm{v_k}) \ra$ is precisely the case where each $L_j$ has rank one.

The scale ideal $\mathfrak{s}(L)$ (resp. the norm ideal $\n(L)$) of $L$ is the $\Z$-module generated by the subset $B(L,L)$ (resp. $Q(L)$) of $\Q$. We say a $\Z$-lattice $L$ is {\em positive definite} if $Q(\bm{x})>0$ for any $\bm{x}\in L\setminus\{0\}$. Since $Q_L$ is integral precisely when $\mathfrak{s}(L) \subseteq \Z$, we call such a $\Z$-lattice $L$ {\em integral} and we call the lattice {\em primitive} if $\mathfrak{s}(L) = \Z$. When a lattice is not integral or primitive, it is frequently beneficial to \begin{it}scale\end{it} the lattice appropriately; hence for $a\in\Q$ and a lattice $L$ in the quadratic space $V$, let $V^a$ denote the vector space $V$ provided with a new bilinear form $B^a(\bm{x},\bm{y}):=aB(\bm{x},\bm{y})$, and we shall use $L^a$ to denote the  \begin{it}scaled lattice\end{it}, which is lattice $L$ when it is regarded as a lattice in $V^a$. The quadratic form $Q_L$ is integer-valued precisely when $\n(L)\subseteq \Z$. Throughout this article, we restrict ourselves to  positive-definite integral $\Z$-lattice, and hereafter simply use the term ``lattice'' to refer to such lattices. We say that a lattice $L$ is {\em even} if $\n(L)\subseteq 2\Z$, and it is {\em odd} otherwise.

For two lattices $K$ and $L$, we say $K$ is represented by $L$, denoted $K \rightarrow L$,  if there is a linear map $\sigma : K\rightarrow L$ such that 
$$
B(\sigma(\bm{x}),\sigma(\bm{y})) = B(\bm{x},\bm{y}) \quad \text{for any }\bm{x}, \bm{y}\in K.
$$
Such a linear map is called a {\em representation} from $K$ to $L$ (or a representation of $K$ by $L$).

We say $K$ and $L$ are {\em isometric} to each other, denoted $K \cong L$, if $K \rightarrow L$ and $L \rightarrow K$, and in this case we call $\sigma$ an \begin{it}isometry\end{it}. 
We let $R(K,L)$ be the set of all representations from $K$ to $L$ and denote its size by $r(K,L):=|R(K,L)|$. 
In particular, the set $O(L):=R(L,L)$ is the group of {\em automorphs} of $L$, and we use the notation $o(L)$ to denote $r(L,L)$.
Moreover, for any integer $n$, we abbreviate 
\[
r(n,L):=r(\la n \ra, L) = \# \{\bm{x} \in L : Q(\bm{x})=n\}=r_{Q_L}(n).
\]
Note that although $Q_L$ depends on the choice of basis, $r_{Q_L}(n)=r(n,L)$ does not. This is because $r_{Q_L}(n)$ is a class invariant (under isometry) and different choices of basis yield isometric quadratic forms via the isometry defined by the change of basis map.

\subsection{Local densities and the Siegel--Weil averaging theorem}
For a prime $p$, a quadratic lattice over the ring of $p$-adic integers $\Z_p$ and related notations and terminologies may be defined analogously to the definitions given in Section \ref{sec:quadlattice}. Let $M$ and $N$ be two integral $\Z_p$-lattices of rank $s$ and $t$ with Gram matrices $S$ and $T$, respectively. Suppose that $t\ge s \ge 1$. Then Siegel \cite{Siegel1,Siegel2} defined the {\em local density} of representations of $M$ by $N$ as
$$
\alpha_p(M,N):= 2^{-\delta_{s,t}}  \lim_{r\rightarrow\infty} (p^r)^{s(s+1)/2-st} \# A_{p^r}(S,T),
$$
where $\delta_{m,n}=1$ if $m=n$ and $0$ otherwise, and (embedding $\Z/p^r\Z$ into $\Z_p$ by choosing an integer representative as usual)
$$
A_{p^r}(S,T):=\{X \in M_{t,s}(\Z/p^r\Z)  : X^tTX\equiv S \pmod{p^r}\}.
$$
Sometimes the following modified local density is used:
$$
\beta_p(M,N):=  \lim_{r\rightarrow\infty} (p^r)^{s(s+1)/2-st} \# \{X \in M_{t,s}(\Z/p^r\Z) : X^tTX\equiv S \pmod{p^rE_s(\Z_p)}\},
$$
where $E_s(\Z_p)$ is the set of $s\times s$ symmetric matrices over $\Z_p$ all of whose diagonal entries belong to $2\Z_p$.
In fact, they are related by $\a_p(M,N)=2^{s\delta_{p,2}-\delta_{s,t}}\beta_p(M,N)$, where $\delta_{p,2}=0$ if $p$ is odd, and $\delta_{p,2}=1$ if $p=2$.

Let $L$ be a lattice. For any prime number $p$, we define the localization of $L$ at $p$ by the $\Z_p$-lattice $L_p:=L\otimes_\Z \Z_p$. We define the {\em genus} $\gen(L)$ of $L$ as 
$$
\gen(L)=\{K\subseteq \Q L : K_p \cong L_p \text{ for any prime } p\},
$$ 
where $\Q L= \{ \a v : \a \in \Q , v \in L\}$ is the quadratic space $V$ on which $L$ lies and $K$ are $\Z$-lattices. The isometric relation induces an equivalence relation on $\gen(L)$, and $\gen(L)/_\sim$ is the set of classes in $\gen(L)$. The latter is a finite set and its cardinality is called the {\em  class number} of $L$. The (isometry) class of $L$ in $\gen(L)$ is denoted by $[L]$.
For two lattices $M$ and $N$, and a prime $p$, we simply write $\a_p(M,N)=\a_p(M_p,N_p)$ and $\beta_p(M,N)=\beta_p(M_p,N_p)$.

Let $L$ be a lattice and let $n$ be a positive integer. 
We say $n$ is {\em locally represented} by $L$ if $n$ is represented by $L_p$ over $\Z_p$ for any prime number $p$.
Define
\begin{equation}\label{eqn:SiegelWeil}
w(L):=\sum_{[T]\in \gen(L)/_\sim} \frac{1}{o(T)} \quad \text{and} \quad r(n,\gen(L)):= \frac{1}{w(L)}\sum_{[T]\in \gen(L)/_\sim} \frac{r(n,T)}{o(T)}.
\end{equation}
The value $w(L)$ is called the {\em mass (or weight)} of $L$. It is well known that if $n$ is locally represented by $L$, then $n$ is represented by a lattice $K$ in $\gen(L)$ (see \cite[102:5]{OMBook}), and hence $r(n,\gen(L))>0$.
\begin{theorem}[Minkowski--Siegel Formula]\label{Minkowski-Siegel}
Let $L$ be a lattice of rank $\ell\ge2$ and $n$ be an integer. 
Let $\varepsilon_\ell = \frac{1}{2}$ if $\ell=2$ and $\varepsilon_\ell=1$ otherwise.
Then we have
\begin{gather*}
w(L)^{-1}=\frac{1}{2} \cdot \pi^{\ell(\ell+1)/4} \cdot \prod_{i=0}^{\ell-1} \Gamma((\ell-i)/2)^{-1} \cdot (d_L)^{-(\ell+1)/2}\times  \prod_{p} \a_p(L,L), \text{ and}\\
r(n,\gen(L))=\varepsilon_{\ell} \cdot \pi^{\ell/2} \cdot \Gamma(\ell/2)^{-1} \cdot (d_L)^{-1/2} n^{\ell/2-1} \times \prod_{p} \a_p(n,L).	
\end{gather*}
Here, $\Gamma$ is the ordinary gamma function and $\a_p$ is the local density at a prime $p$.
\end{theorem}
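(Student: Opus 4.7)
The Minkowski--Siegel formula is a classical result, so the plan is to recover both identities from the Siegel--Weil identity, combined with an explicit calculation of archimedean densities. For rank $\ell\geq 3$ (the rank $\ell=2$ case requiring the extra $\varepsilon_\ell=1/2$ factor due to a regularization), the Siegel--Weil identity states that, for $n$ locally represented by $L$,
\[
r(n,\gen(L))=\prod_{v}\alpha_v(n,L),
\]
where the product runs over all places $v$ of $\Q$ (and the archimedean factor $\alpha_\infty(n,L)$ is normalized compatibly with $\alpha_p$ at finite primes). I would take this identity as the starting point, deferring its proof to its standard derivation via the theta correspondence (or Siegel's original Eisenstein-series comparison, matching Fourier coefficients of a genus-averaged theta series against those of an explicit Eisenstein series).

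First I would compute the archimedean density. By definition $\alpha_\infty(n,L)$ is the limiting volume of $\{\bm{x}\in V_\R : Q_L(\bm{x})\in(n-\epsilon,n+\epsilon)\}$ normalized by $2\epsilon$, with respect to the self-dual measure on $V_\R$ determined by the Gram matrix $A_L$. Diagonalizing $A_L$ over $\R$ (using positive definiteness) and passing to polar coordinates, the volume reduces to the surface area of the ellipsoid $\{Q_L=n\}$. Using $\operatorname{vol}(S^{\ell-1})=2\pi^{\ell/2}/\Gamma(\ell/2)$ and the change of variables that pulls out a factor $(d_L)^{-1/2}$ from the diagonalization, a direct computation gives
\[
\alpha_\infty(n,L)=\varepsilon_\ell\cdot\pi^{\ell/2}\cdot\Gamma(\ell/2)^{-1}\cdot(d_L)^{-1/2}\cdot n^{\ell/2-1},
\]
which together with the Siegel--Weil identity yields the stated formula for $r(n,\gen(L))$.

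For the mass formula, I would use the adelic interpretation: $w(L)^{-1}$ is the Tamagawa-type volume of $O(V)(\Q)\backslash O(V)(\mathbb{A})$, which by strong approximation (or by direct class-number decomposition) factors as a product of local volumes. Normalizing measures so that the finite-prime volume of the stabilizer of $L_p$ contributes $\alpha_p(L,L)$, and computing the archimedean volume of the compact orthogonal group $O(\ell,\R)$ as $2\cdot\pi^{\ell(\ell+1)/4}\prod_{i=0}^{\ell-1}\Gamma((\ell-i)/2)^{-1}$, together with the discriminant normalization that produces $(d_L)^{-(\ell+1)/2}$, yields the first identity. Pulling out the factor $\tfrac{1}{2}$ matches the statement.

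The hard part will be the consistent bookkeeping of measure normalizations: the factor $2^{-\delta_{s,t}}$ appearing in the definition of $\alpha_p(L,L)$, the factor $2^{s\delta_{p,2}}$ relating $\alpha_p$ to $\beta_p$, and the choice of Haar measure on $O(V)(\R)$. To avoid mistakes, I would fix the self-dual additive measure on the adeles at the outset, derive all local volumes from that choice, and only at the end collect the resulting constants into the forms displayed in the theorem. Once the conventions are pinned down, both identities follow by a product computation; the remaining subtlety at $\ell=2$ is absorbed into the constant $\varepsilon_\ell$, which appears because the Siegel--Weil identity in rank two requires a regularization that contributes an extra factor of $1/2$.
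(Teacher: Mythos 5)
The paper offers no proof of this theorem: it is quoted as Siegel's classical result (the local densities are defined following \cite{Siegel1,Siegel2}), so there is no internal argument to compare yours against. Judged on its own, your outline follows the standard modern route, and the one piece you actually carry out --- the archimedean density $\alpha_\infty(n,L)=\varepsilon_\ell\,\pi^{\ell/2}\Gamma(\ell/2)^{-1}(d_L)^{-1/2}n^{\ell/2-1}$ via diagonalization and the surface measure of the ellipsoid $\{Q_L=n\}$ --- is correct.

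The gap is that the two pillars you ``defer'' are not auxiliary facts but are essentially the theorem itself: the identity $r(n,\gen(L))=\prod_v\alpha_v(n,L)$ and the identification of $w(L)^{-1}$ with an adelic volume that factors into the local densities $\alpha_p(L,L)$ \emph{are} the Minkowski--Siegel formula, so what remains of your argument is a citation plus constant bookkeeping. That stance is acceptable here (the paper itself only cites the result), but then the bookkeeping is where all the risk sits, and you leave it unchecked: the displayed constants depend on the specific normalization $\alpha_p(M,N)=2^{-\delta_{s,t}}\lim_r (p^r)^{s(s+1)/2-st}\#A_{p^r}(S,T)$ used in this paper, on $d_L$ being the determinant of the full Gram matrix rather than a half-discriminant, and on the relation $\alpha_p=2^{s\delta_{p,2}-\delta_{s,t}}\beta_p$ at $p=2$; a mismatch in any of these conventions changes the powers of $2$ and of $d_L$ in both displayed formulas. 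Likewise, attributing $\varepsilon_2=\tfrac12$ to ``a regularization'' is too vague: in the binary case the relevant group $\mathrm{SO}(2)$ is a torus and the weight-one Eisenstein series is only conditionally convergent, which is the classical source of the extra factor, and this needs a precise reference rather than an assertion. To make the write-up sound, replace the deferrals by explicit citations to Siegel's papers or to a textbook treatment (e.g.\ \cite{KiBook}) stated in exactly this normalization, and verify that the quoted constants transform correctly into the ones displayed in the theorem.
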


\subsection{Stable lattices} 
We say a primitive ternary lattice $L$ is \begin{it}stable at a prime $p$\end{it} if $\ord_p(d_L)\le 1$, and in addition  when $p=2$, $\ord_2(d_L)=1$ if and only if $L$ is even.
A primitive ternary lattice $L$ is called {\em stable} if it is stable at every prime.
We say $L_p$ is {\em isotropic} if $Q(\bm{x})= 0$ for some non-zero element $\bm{x}\in L_p$, and it is {\em anisotropic} otherwise. 
Note that for any ternary lattice $L$, the Hasse symbol $S_p(L)$ of $L$ at a prime $p$ indicates whether $L_p$ is isotropic or not; more precisely (see \cite[58:6]{OMBook})
\begin{equation}\label{HasseSymbol-IsoAniso}
S_p^\ast(L):=(-1)^{\delta_{p,2}}S_p(L)=
\begin{cases}
	1 & \text{if } L_p \text{ is isotropic},\\
	-1 & \text{if } L_p \text{ is anisotropic}.
\end{cases}
\end{equation}

For each prime $p$, a Jordan decomposition of $L_p$ for a stable lattice $L$ may be described as follows.
\begin{lemma}\label{JordanDecomp-stable}
Let $L$ be a stable lattice. For an odd prime $p$, we have
$$
L_p \cong \begin{cases}
	\la 1, 1, d_L \ra & \text{if } \ord_p(d_L)=0,\\
	\la 1, -1, -d_L \ra & \text{if } \ord_p(d_L)=1 \text{ and } L_p \text{ is isotropic},\\
	\la 1, -\Delta_p, -\Delta_p d_L \ra & \text{if } \ord_p(d_L)=1 \text{ and } L_p \text{ is anisotropic},
\end{cases}
$$
where $\Delta_p$ denotes a non-square unit in $\Z_p$. For $p=2$, we have
$$
L_2 \cong \begin{cases}
	\mathbb{A}\perp \la 3d_L \ra & \text{if } L_2 \text{ is anisotropic},\\
	\mathbb{H}\perp \la -d_L \ra & \text{otherwise},
\end{cases}
$$
where $\mathbb{A}= \left( \begin{smallmatrix}	2&1\\1&2  \end{smallmatrix}\right)$ and $\mathbb{H}= \left( \begin{smallmatrix}	0&1\\1&0  \end{smallmatrix}\right)$.
\end{lemma}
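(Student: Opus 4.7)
The plan is to invoke the Jordan decomposition theorem for $\Z_p$-lattices and exploit the stability hypothesis $\ord_p(d_L)\le 1$ to drastically cut down the possible Jordan types; the residual classification is then forced by the discriminant (modulo squares) together with the isotropy of $L_p$.

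For an odd prime $p$, every Jordan decomposition is a diagonal orthogonal sum, so $L_p\cong\la u_1,u_2\ra\perp\la p^a u_3\ra$ with $u_i\in\Z_p^\times$ and $a\in\{0,1\}$. When $a=0$ (so $\ord_p(d_L)=0$), rank-three unimodular $\Z_p$-lattices at odd $p$ are classified up to isometry by their discriminant modulo $(\Z_p^\times)^2$, and $\la 1,1,d_L\ra$ manifestly has the correct discriminant. When $a=1$, the binary unimodular block is determined up to isometry by its unit discriminant class; it is isotropic (hence hyperbolic, i.e., isometric to $\la 1,-1\ra$) if and only if $L_p$ itself is isotropic, since if the binary block were anisotropic then adjoining any rank-one block of scale $p\Z_p$ would preserve anisotropy (as one sees by a valuation argument on a hypothetical null vector). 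The last diagonal entries $-d_L$ and $-\Delta_p d_L$ are then pinned down by insisting that the product of the three diagonal entries matches $d_L$ modulo squares.

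For $p=2$ the argument is more delicate, because the even unimodular rank-two Jordan blocks over $\Z_2$ are the non-diagonal lattices $\mathbb{H}$ and $\mathbb{A}$. Stability together with the parity condition (odd $L$ gives $\ord_2(d_L)=0$, even $L$ gives $\ord_2(d_L)=1$) forces $L_2$ to split as a binary block whose norms lie in $2\Z_2$, orthogonal to a rank-one block carrying the remaining ``odd'' (resp.\ scale-$2\Z_2$) part. The binary block is even unimodular, hence isometric to either $\mathbb{H}$ or $\mathbb{A}$, these being the only two isometry classes of even unimodular binary $\Z_2$-lattices. Since $\mathbb{H}$ is isotropic and $\mathbb{A}\perp\la c\ra$ realises the unique anisotropic ternary quadratic space over $\Q_2$ for any admissible $c$ (verified by a Hilbert symbol computation), the isotropy of $L_2$ dictates which binary block occurs. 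Matching discriminants modulo $(\Q_2^\times)^2$ --- using that $\mathbb{H}$ has discriminant $-1$, $\mathbb{A}$ has discriminant $3$, and that $9\equiv 1\pmod{8}$ is a square in $\Z_2$ while $3$ is not --- then pins down the rank-one component as $\la -d_L\ra$ and $\la 3d_L\ra$ respectively.

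The main obstacle is the $2$-adic case, specifically isolating the binary block in the precise non-diagonal form $\mathbb{H}$ or $\mathbb{A}$ rather than a diagonal block mixing with the rank-one part, and verifying that the orthogonal complement takes the exact unit multiple of $d_L$ claimed. I would handle this by appealing to the structure theory of $2$-adic Jordan decompositions (cf.\ O'Meara \S 93), combined with explicit case enumeration over the finite group $\Z_2^\times/(\Z_2^\times)^2$ of order four, which keeps the number of subcases small and manageable.
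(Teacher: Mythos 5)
The paper states this lemma without proof (it is treated as standard), so I am judging your argument on its own terms. The odd-prime half and the odd-at-$2$ half of your plan are sound: at odd $p$ the unit-square-class bookkeeping plus your valuation argument for transferring (an)isotropy from the unimodular binary block to $L_p$ is exactly what is needed, and for $L$ odd at $2$ your dichotomy is correct, because $\mathbb{A}\otimes\Q_2\cong\la 2,6\ra$ is twice the norm form of the unramified extension $\Q_2(\sqrt{-3})$, whose nonzero values all have even valuation, so $\mathbb{A}\perp\la c\ra$ is indeed anisotropic whenever $c\in\Z_2^\times$. (You should, however, actually produce the splitting of an odd unimodular $L_2\cong\la a,b,c\ra$ into an even unimodular binary block plus a unary block, e.g.\ via the vectors $e_1+e_2$ and $e_2+e_3$, whose Gram matrix has odd determinant $ab+bc+ca$; the Jordan decomposition itself hands you a diagonal splitting, not the one you assert.)

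The genuine gap is in the even case, i.e.\ when $\ord_2(d_L)=1$. Your key claim that ``$\mathbb{A}\perp\la c\ra$ realises the unique anisotropic ternary quadratic space over $\Q_2$ for any admissible $c$'' is false precisely there: for $c=2u$ with $u\in\Z_2^\times$ the form $\la 2,6,2u\ra\cong 2\la 1,3,u\ra$ is isotropic, since $x^2+3y^2$ represents every unit (all units are norms from the unramified extension), in particular $-u$. Consequently isotropy does not decide which even unimodular block occurs, and your dichotomy collapses: a stable even lattice may perfectly well arise with a Jordan splitting $\mathbb{A}\perp\la 2u\ra$, and the whole content of the lemma in this case is the non-obvious $2$-adic isometry $\mathbb{A}\perp\la 2u\ra\cong\mathbb{H}\perp\la 2(u+4)\ra\cong\mathbb{H}\perp\la -d_L\ra$ (an O'Meara 93:18-type relation between Jordan splittings, ``sign walking'' in Conway--Sloane language), which discriminant-plus-isotropy matching cannot see. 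As written, your argument would instead classify such a lattice as anisotropic and output $\mathbb{A}\perp\la 3d_L\ra$, contradicting the isotropy one checks directly; note also that, as a by-product of the correct relation, every $2$-adically stable even lattice is isotropic at $2$, so the anisotropic alternative of the lemma is vacuous for even $L$ --- a fact your proof should surface rather than obscure. (A smaller slip: there is not a unique anisotropic ternary space over $\Q_2$, only a unique one for each square class of discriminant, which is presumably what you intended.)
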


The above lemma implies that if $K$ and $L$ are two stable lattices with $d_K=d_L$, then for any prime $p$ we have $K_p\cong L_p$ if and only if $S_p(K)=S_p(L)$. 
We next give a formula for the local density of a lattice $L$ at an odd prime $p$ at which the lattice is stable. To describe the result, we write $L_p=M_p\perp U_p$, where $M_p$ is the leading Jordan component and $\mathfrak{s}(U_p) \subseteq p\mathfrak{s}(M_p)$. 
\begin{lemma}\label{localdensities-stable1}
Let $p$ be an odd prime and let $L$ be a ternary lattice that is stable at $p$. For $n\in \N$, let $\nu_p:=\ord_p(n)$. 
If $\ord_p(d_L)=0$, then we have
$$
\a_p(n,L)=
\begin{cases}
	1+\frac{1}{p}-\frac{1}{p^{\frac{\nu_p+2}{2}}}+\left( \frac{-p^{-\nu_p}nd_L}{p}\right) \frac{1}{p^{\frac{\nu_p+2}{2}}} & \text{if } \nu_p \text{ is even},\\
	1+\frac{1}{p}-\frac{1}{p^{\frac{\nu_p+1}{2}}}-\frac{1}{p^{\frac{\nu_p+3}{2}}} & \text{if } \nu_p \text{ is odd}.
\end{cases}
$$
If $\ord_p(d_L)=1$, then we have
$$
\a_p(n,L)=
\begin{cases}
	1+\left(1-\frac{1}{p^{\frac{\nu_p}{2}}}-\frac{1}{p^{\frac{\nu_p+2}{2}}}\right) \left(\frac{-d_{M_p}}{p}\right) & \text{if } \nu_p \text{ is even},\\
	1+\left(1-\frac{1}{p^{\frac{\nu_p+1}{2}}}+\left( \frac{-p^{-(\nu_p+1)}nd_L}{p}\right) \frac{1}{p^{\frac{\nu_p+1}{2}}}\right) \left(\frac{-d_{M_p}}{p}\right) & \text{if } \nu_p \text{ is odd}.
\end{cases}
$$
Here, $\left(\frac{\cdot}{p}\right)$ is the Legendre symbol.
\end{lemma}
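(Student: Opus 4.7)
The plan is to compute $\a_p(n,L)$ directly from its definition using the explicit Jordan decompositions from Lemma~\ref{JordanDecomp-stable} together with classical counting of solutions to diagonal quadratic congruences over $\Z/p^r\Z$ for odd $p$.

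In the unimodular case $\ord_p(d_L)=0$, one has $L_p\cong\la 1,1,d_L\ra$, and I would evaluate the local density by counting $\bm{x}\in(\Z/p^r\Z)^3$ with $x_1^2+x_2^2+d_Lx_3^2\equiv n\pmod{p^r}$ for $r$ sufficiently large. Stratifying according to whether some coordinate is a unit, the ``non-singular'' stratum lifts via Hensel's lemma from its residue modulo $p$, while the ``singular'' stratum (where $p$ divides every coordinate) reduces the count to the same problem with $n\mapsto p^{-2}n$ and $r\mapsto r-2$. This sets up a geometric recursion in $\nu_p=\ord_p(n)$: the terms $1+\tfrac{1}{p}$ come from the generic Hensel contribution at each layer, and the Legendre symbol $\legendre{-p^{-\nu_p}nd_L}{p}$ in the even-$\nu_p$ formula arises as the coefficient of the Gauss-sum evaluation at the terminal layer (where the residual exponent has dropped to zero); when $\nu_p$ is odd the recursion terminates one step earlier with $p\,\|\,n$, so no such Legendre symbol survives.

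For $\ord_p(d_L)=1$, I would decompose $L_p\cong M_p\perp U_p$ with $M_p$ unimodular binary and $U_p\cong\la\varepsilon p\ra$ for the appropriate unit $\varepsilon$ selected by the isotropy of $L_p$, and index the count by $k=\ord_p(x_3)$. For each admissible $k$, the problem reduces to counting representations of $n-\varepsilon p^{2k+1}y^2$ by the unimodular binary $M_p$ with $y$ a unit; the well-known local density for a unimodular binary form over $\Z_p$ contributes the factor $\legendre{-d_{M_p}}{p}$, which by Lemma~\ref{JordanDecomp-stable} coincides with $S_p^\ast(L)$ via~\eqref{HasseSymbol-IsoAniso}. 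Summing the resulting geometric series in $k$ and splitting the terminal layer by the parity of $\nu_p$ produces the two stated formulas, with the Legendre symbol $\legendre{-p^{-(\nu_p+1)}nd_L}{p}$ in the odd case arising from the terminal Gauss sum just as in the unimodular case.

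The main obstacle is the careful bookkeeping of which layer terminates the recursion in each parity branch, and matching the Legendre symbol factors coming from the terminal Gauss sums with the isotropy data packaged in $d_{M_p}$ via~\eqref{HasseSymbol-IsoAniso}. Since the Jordan data is fully explicit for odd $p$ and every $\pmod{p}$ count reduces to the standard Gauss-sum formulas for diagonal unary, binary, and ternary forms over $\mathbb{F}_p$, the analysis is systematic once the stratification is organised, though care is required to avoid off-by-one errors in the depth of the recursion.
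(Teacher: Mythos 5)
Your plan is sound, but it takes a genuinely different route from the paper. The paper's proof is essentially a citation: it combines the explicit Jordan splitting of Lemma \ref{JordanDecomp-stable} with Yang's closed formula for local densities of diagonal forms at odd primes (\cite[Theorem 3.1]{Yang}), so the whole argument reduces to substituting the exponents $\ell_j\in\{0,1\}$ into Yang's expression and summing a geometric series; no counting argument is made from scratch. You instead re-derive the density from the definition: in the unimodular case the stratification into the Hensel-liftable stratum (gradient nonzero mod $p$) and the stratum $\bm{x}\equiv\bm{0}\pmod p$ gives the recursion $\a_p(n,L)=1-p^{-2}+p^{-1}\a_p(np^{-2},L)$ for $\nu_p\ge 2$, with initial values $1+\legendre{-nd_L}{p}p^{-1}$ and $1-p^{-2}$, and one checks directly that the stated formula satisfies this recursion — this works and matches the claim. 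In the $\ord_p(d_L)=1$ case your layering over $k=\ord_p(x_3)$ also works, but to make it complete you must quote (or prove) the uniform mod-$p^r$ representation counts for a \emph{binary} unimodular form at every valuation of the target $n-\varepsilon p^{2k+1}y^2$, not only its leading behaviour, and justify exchanging the limit $r\to\infty$ with the sum over $k$; these are standard facts but they are exactly where the bookkeeping you flag lives. In short: the paper buys brevity by outsourcing the computation to Yang, while your approach is self-contained, makes the origin of the Legendre symbols (terminal finite-field point counts) transparent, and would serve as an independent verification of the quoted formula, at the cost of a longer and more delicate case analysis.
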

\begin{proof}
	This is a direct consequence of Lemma \ref{JordanDecomp-stable} and \cite[Theorem 3.1]{Yang}.
\begin{extradetails}
\rm 
By Lemma \ref{JordanDecomp-stable}, if $\ord_{p}(d_L)=0$, then
\[
 L_p \cong \la 1, 1, d_L \ra.
\]
We are interested in the $\ell=r=0$ case of \cite[Theorem 3.1]{Yang}. Plugging in the definition \cite[(3.2)]{Yang}, we get 
\begin{equation}\label{eqn:Yang}
\alpha_{p}(n,L)=1+ R_1(1,n,L)= 1+  \left(1-p^{-1}\right) \sum_{\substack{0< k\leq \nu_p\\ \ell(k,1)\text{ is even}}} v_kp^{d(k)} +v_{\nu_p+1} p^{d(\nu_p+1)}f_1(n).
\end{equation}
Here, from \cite[(1.6)]{Yang}, for $L_p=\la \varepsilon_1 p^{\ell_1},\dots,\varepsilon_d p^{\ell_d}\ra$ we have
\[
L(k,1):=\{1\leq j\leq d: \ell_j-k<0\text{ is odd}\}\qquad \ell(k,1):=\#L(k,1),
\]
from \cite[(1.7)]{Yang} we have
\[
d(k):=k+\frac{1}{2}\sum_{\ell_j<k} \left(\ell_j-k\right),
\]
from \cite[(1.8)]{Yang} we have 
\[
v(k):=\left(\frac{-1}{p}\right)^{\left\lfloor\frac{\ell(k,1)}{2}\right\rfloor} \prod_{j\in L(k,1)} \left(\frac{\ell_j}{p}\right),
\]
and for $n=n'p^{\nu_p}$ we have (see \cite[(3.1)]{Yang})
\[
f_1(n)=\begin{cases} 
-\frac{1}{p}&\text{if }\ell(\nu_p+1,1)\text{ is even},\\
\left(\frac{n'}{p}\right)\frac{1}{\sqrt{p}}&\text{if }\ell(\nu_p+1,1)\text{ is odd},
\end{cases}
\]
In our case, $d=3$ and $\ell_j=0$ for all $j$, so for $k\in \N$ we have $d(k)=k-\frac{3}{2}(k)=-\frac{k}{2}$, while $\ell(k,1)=3$ if $k$ is odd and $\ell(k,1)=0$ if $k$ is even. Hence, plugging this into \eqref{eqn:Yang} and using the geometric series formula 
\begin{equation}\label{eqn:geomseries}
\sum_{k=1}^{A} x^{-k} = x^{-1}\sum_{k=0}^{A-1} x^{-k} = x^{-1}\frac{1-x^{-A}}{1-x^{-1}}
\end{equation}
 in the second step below,
\begin{multline}\label{eqn:alphapnLord0}
\alpha_{p}(n,L)=1 + \left(1-p^{-1}\right)\sum_{k=1}^{\left\lfloor\frac{\nu_{p}}{2}\right\rfloor} p^{-k}  +\begin{cases}
  \left(\frac{-d_L}{p}\right) p^{-\frac{\nu_p+1}{2}}\left(\frac{n'}{p}\right) p^{-\frac{1}{2}}&\text{if }\nu_{p}\text{ is even},\\
-p^{-\frac{\nu_p+3}{2}}&\text{if }\nu_p\text{ is odd},
\end{cases}
\\
=1+\left(1-p^{-1}\right)p^{-1}\frac{1-p^{-\left\lfloor\frac{\nu_p}{2}\right\rfloor}}{1-p^{-1}} + \begin{cases}
  \left(\frac{-n' d_L}{p}\right) p^{-\frac{\nu_p}{2}-1}&\text{if }\nu_{p}\text{ is even},\\
-p^{-\frac{\nu_p+3}{2}}&\text{if }\nu_p\text{ is odd},
\end{cases}
\\
=1+p^{-1}-p^{-\left\lfloor\frac{\nu_p}{2}\right\rfloor-1}+ \begin{cases}
  \left(\frac{-n'd_L}{p}\right) p^{-\frac{\nu_p}{2}-1}&\text{if }\nu_{p}\text{ is even},\\
-p^{-\frac{\nu_p+3}{2}}&\text{if }\nu_p\text{ is odd},
\end{cases}
\\
=1+p^{-1} +\begin{cases}
 -p^{\frac{\nu_p}{2}-1}+ \left(\frac{-n'd_L}{p}\right) p^{-\frac{\nu_p}{2}-1}&\text{if }\nu_{p}\text{ is even},\\
-p^{-\frac{\nu_p+1}{2}}-p^{-\frac{\nu_p+3}{2}}&\text{if }\nu_p\text{ is odd}.
\end{cases}
\end{multline}
This is precisely the claim in the case $\ord_p(d_L)=0$.

Now suppose that $\ord_p(d_L)=1$. In this case, we have $\ell_1=\ell_2=0$ and $\ell_3=1$. Therefore 
\[
\ell(k,1)=\begin{cases}
2&\text{if $k$ is odd},\\
1&\text{if $k$ is even}.
\end{cases}
\]
For $k\in\N$ We also have 
\[
d(k)=k+\frac{1}{2}\sum_{\ell_j< k} \left(\ell_j-k\right)  =k+\frac{1}{2}\sum_{\ell_j\leq k} \left(\ell_j-k\right) = \frac{-k}{2}+\frac{1}{2}=\frac{1-k}{2}.
\]
By Lemma \ref{JordanDecomp-stable} we either have 
\[
L_p\cong \la 1,-1,-d_L\ra \qquad\text{ or }\qquad L_p\cong \la 1,-\Delta_p,-\Delta_pd_L\ra,
\]
We have written $M_p:=\la 1,-1\ra$ (resp. $M_p:=\la 1,-\Delta_p\ra$) and $U_p:=\la -d_L\ra$ (resp. $U_p:=\la -\Delta_p d_L\ra$) in the first (resp. second) case. Note that 
\[
d_{U_p} = d_{M_p} d_{L},
\]
 so we have 
\[
v(k)=\begin{cases} 
\left(\frac{-d_{M_p}}{p}\right)&\text{if $k$ is odd},\\
\left(\frac{d_{M_p} d_{L}}{p}\right)&\text{if $k$ is even}.
\end{cases}
\]
We see that \eqref{eqn:Yang} becomes (noting that only the terms with $k$ odd survive in the sum)
\begin{multline*}
\alpha_{p}(n,L)=1 + \left(1-p^{-1}\right)\left(\frac{-d_{M_p}}{p}\right)\sum_{k=0}^{\left\lfloor\frac{\nu_{p}-1}{2}\right\rfloor} p^{-k}  + \begin{cases}
  -\left(\frac{-d_{M_p}}{p}\right) p^{-\frac{\nu_p}{2}-1}&\text{if }\nu_{p}\text{ is even},\\
\left(\frac{n'd_{M_p}d_L}{p}\right) p^{-\frac{\nu_p+1}{2}}&\text{if }\nu_p\text{ is odd},
\end{cases}
\\
=1+\left(1-p^{-1}\right)\left(\frac{-d_{M_p}}{p}\right)\frac{1-p^{-\left\lfloor\frac{\nu_p+1}{2}\right\rfloor}}{1-p^{-1}} + 
\left(\frac{-d_{M_p}}{p}\right) \begin{cases}
 - p^{-\frac{\nu_p}{2}-1}&\text{if }\nu_{p}\text{ is even},\\
  \left(\frac{-n'd_L}{p}\right) p^{-\frac{\nu_p+1}{2}}&\text{if }\nu_p\text{ is odd},
\end{cases}
\\
=1+\left(\frac{-d_{M_p}}{p}\right) \times 
\begin{cases}
1-p^{-\frac{\nu_p}{2}}  -p^{-\frac{\nu_p}{2}-1}&\text{if }\nu_{p}\text{ is even},\\
1-p^{-\frac{\nu_p+1}{2}}+\left(\frac{-n'd_L}{p}\right) p^{-\frac{\nu_p+1}{2}}&\text{if }\nu_p\text{ is odd},
\end{cases}
\end{multline*}
as claimed.
\end{extradetails}
\end{proof}
We also require the local density at the prime $2$ for lattices that are $2$-adically stable.
\begin{lemma}\label{localdensities-stable2}
	Let $L$ be a ternary lattice that is stable at $2$. For $n\in \N$, write $n=\beta \cdot 2^a$ with $\beta\equiv 1\pmod{2}$ and $a\ge 0$.
	If $L$ is odd and anisotropic, then we have 
	$$
	\a_2(n,L)=\begin{cases}
		3\cdot 2^{-\frac{a+1}{2}} & \text{if } a \text{ is odd},\\
		3\cdot 2^{-\frac{a+2}{2}} & \text{if } a \text{ is even and } \beta\equiv d_L \pmod{4},\\
		2^{-\frac{a}{2}} & \text{if } a \text{ is even and } \beta\equiv 3d_L \pmod{8},\\
		0 & \text{if } a \text{ is even and } \beta\equiv 7d_L \pmod{8}.\\		
	\end{cases}
	$$	
	If $L$ is odd and isotropic, then we have
	$$
	\a_2(n,L)=\begin{cases}
		2-3\cdot 2^{-\frac{a+1}{2}} & \text{if } a \text{ is odd},\\
		2-3\cdot 2^{-\frac{a+2}{2}} & \text{if } a \text{ is even and } \beta\equiv d_L \pmod{4},\\
		2-2^{-\frac{a}{2}} & \text{if } a \text{ is even and } \beta\equiv 3d_L \pmod{8},\\
		2 & \text{if } a \text{ is even and } \beta\equiv 7d_L \pmod{8}.\\		
	\end{cases}
	$$	
	If $L$ is even, then we have 
	$$
	\a_2(n,L)=\begin{cases}
		3-3\cdot 2^{-\frac{a}{2}} & \text{if } a \text{ is even},\\
		3-3\cdot 2^{-\frac{a+1}{2}} & \text{if } a \text{ is odd and } \beta\equiv d_L/2 \pmod{4},\\
		3-2^{-\frac{a-1}{2}} & \text{if } a \text{ is odd and } \beta\equiv 3d_L/2 \pmod{8},\\
		3 & \text{if } a \text{ is odd and } \beta\equiv 7d_L/2 \pmod{8}.\\		
	\end{cases}
	$$	
	
\end{lemma}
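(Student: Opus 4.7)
The plan is to invoke Yang's explicit formula \cite[Theorem 3.1]{Yang} for local representation densities and apply it to the $2$-adic Jordan decomposition provided by Lemma \ref{JordanDecomp-stable}, in the same spirit as the proof of Lemma \ref{localdensities-stable1}. Recall from Lemma \ref{JordanDecomp-stable} that stability at $2$ forces $L_2 \cong \mathbb{A} \perp \la 3d_L \ra$ when $L_2$ is anisotropic and $L_2 \cong \mathbb{H} \perp \la -d_L \ra$ when $L_2$ is isotropic; in either decomposition the one-dimensional summand is a $2$-adic unit precisely when $L$ is odd (equivalently $\ord_2(d_L)=0$) and lies in $2\Z_2\setminus 4\Z_2$ precisely when $L$ is even (equivalently $\ord_2(d_L)=1$).

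First, I extract from each of these decompositions the $2$-adic Jordan data needed by Yang's formula: the sequence of scales $\ell_j$, the unit parts $\varepsilon_j$, and the associated parameters $\ell(k,1)$, $d(k)$, $v_k$, and boundary character $f_1(n)$. For the binary blocks $\mathbb{A}$ and $\mathbb{H}$ I use Yang's conventions for type I/II summands at $p=2$. Writing $a=\ord_2(n)$ and $\beta=n/2^a$ as in the statement, Yang's formula then has the same shape as in the odd prime case: a main term built from a finite geometric sum in $k$ up to $\lfloor (a+1)/2 \rfloor$, plus a boundary contribution at level $a+1$ involving $f_1(n)$, which detects whether $n$ is represented modulo $2^{a+1}$ by the leading Jordan component.

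Next, I substitute these data into Yang's formula and sum the resulting finite geometric series exactly as in the proof of Lemma \ref{localdensities-stable1} to obtain closed forms. The three cases in the statement correspond to the three Jordan types (odd anisotropic, odd isotropic, even); within each case the parity of $a$ selects between the two possible forms of the boundary term, and when $f_1(n)$ is a quadratic character, the finer split on $\beta \pmod{8}$ emerges from evaluating this $2$-adic character against $\beta$ and the unit part of the top Jordan component. Collecting terms in each of the three blocks of the statement yields the claimed closed expressions.

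The main obstacle is the bookkeeping at $p=2$. Unlike for odd primes, Yang's formula at $2$ involves the non-diagonal binary blocks $\mathbb{A}$ and $\mathbb{H}$, and the boundary character $f_1(n)$ is sensitive to residues of $\beta$ modulo $8$ rather than only modulo a prime. In particular, the vanishing $\a_2(n,L)=0$ in the odd anisotropic case with $a$ even and $\beta\equiv 7d_L\pmod{8}$ reflects that $n/2^a$ then lies in the unique square class in $\Q_2^\times/(\Q_2^\times)^2$ not represented by $\mathbb{A}\perp\la 3d_L\ra$; carrying this residue-class dependence precisely through Yang's formula is what forces the four-way split on $\beta\pmod 8$ rather than a simpler dichotomy, and correctly matching the corresponding $\a_2=2$ value in the isotropic branch (which doubles the unrepresentability into full representability by virtue of the opposite Hasse symbol) is the delicate part of the case analysis.
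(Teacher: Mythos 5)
Your overall strategy matches the paper's: read off the $2$-adic Jordan splitting from Lemma \ref{JordanDecomp-stable} and feed it into Yang's explicit local density formulas, handling the three Jordan types separately. However, as written the plan would not execute: you cite \cite[Theorem 3.1]{Yang}, which is Yang's formula for \emph{odd} primes (the one used in Lemma \ref{localdensities-stable1}), and you describe the computation in its terms --- a geometric sum in $k$ up to $\lfloor (a+1)/2\rfloor$ plus a single boundary term $f_1(n)$ at level $a+1$. That template is specific to diagonal forms over $\Z_p$ with $p$ odd; it has no provision for the blocks $\mathbb{A}$ and $\mathbb{H}$, and $f_1(n)$ is only defined in the odd-prime setting. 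The correct input is \cite[Theorem 4.1]{Yang}, whose $p=2$ formula is structurally different: the sum runs over $0<k\le a+3$, the summands carry local Hilbert symbols $(2,\mu\epsilon(k))_2'$ and a factor $\psi(\mu/8)\operatorname{char}(4\Z_2)(\mu)$ with $\mu=\mu_k(n)$ depending on $\beta$ modulo $8$, and it is the characteristic function of $4\Z_2$ (not a single boundary term) that collapses one of the sums to at most one surviving term. Your qualitative remarks about the mod-$8$ sensitivity and the vanishing in the anisotropic square class are correct, but the quantitative derivation you outline would not reproduce the stated constants.

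A second, smaller omission: in the even case one cannot apply Yang's theorem directly to $L_2\cong\mathbb{H}\perp\la -d_L\ra$ with the given normalization, because $\n(L_2)=2\Z_2$. The paper first records that $\a_2(n,L)=0$ for $n$ odd and $\a_2(n,L)=2\,\a_2\bigl(\tfrac n2,\bigl(\begin{smallmatrix}0&1/2\\1/2&0\end{smallmatrix}\bigr)\perp\la -\tfrac{d_L}{2}\ra\bigr)$ for $n$ even, and only then invokes Yang's Theorem 4.1 on the rescaled lattice; this rescaling (with the shift $a\mapsto a-1$) is where the parity roles of $a$ get swapped relative to the odd cases, and it needs to be made explicit.
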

\begin{proof}
    Note that when $L$ is even, $L_2=\left( \begin{smallmatrix} 0&1 \\ 1&0 \end{smallmatrix}\right)\perp\la -d_L\ra$ and 
	$$
	\a_2(n,L)=\begin{cases}
		0 & \text{if } n \text{ is odd},\\
		2\a_2\left(\frac{n}{2},\left( \begin{smallmatrix} 0&\frac{1}{2} \\ \frac{1}{2}&0 \end{smallmatrix}\right)\perp \la -\frac{d_L}{2} \ra \right)& \text{if } n \text{ is even}.
	\end{cases} 
	$$
	Now, the lemma is a direct consequence of Lemma \ref{JordanDecomp-stable} and \cite[Theorem 4.1]{Yang}.
\begin{extradetails}
\rm
We want the $\ell=0$ case of \cite[Theorem 4.1]{Yang}. Plugging in the definition \cite[(4.4)]{Yang}, we get 
\begin{align}
\nonumber \alpha_{2}(n,L)&=1+ R_1(1,n,L)= 1+  \sum_{\substack{0< k\leq a+3 \\ \ell(k-1,1)\text{ is odd}}}\delta(k)  p(k)\left(2,\mu \epsilon(k)\right)_2' 2^{d(k)-\frac{3}{2}}\\
\label{eqn:Yangp=2}&\quad + \sum_{\substack{0< k\leq a+3 \\ \ell(k-1,1)\text{ is even}}} \delta(k) p(k) \left(2,\epsilon(k)\right)_2' 2^{d(k)-1}\psi\left(\frac{\mu}{8}\right)\operatorname{char}(4\Z_2)(\mu).
\end{align}
Here 
\[
(2,b)_2':=\begin{cases} (2,b)_2&\text{if $b$ is odd},\\ 0&\text{otherwise},\end{cases}
\]
where $(a,b)_p$ denotes the local Hilbert symbol, and, from \cite[(4.3)]{Yang}, for 
\[
L_2=\la \varepsilon_1 2^{\ell_1},\dots,\varepsilon_d 2^{\ell_d}\ra\oplus \bigoplus_{j=1}^M \varepsilon_j' 2^{m_j}\left(\begin{smallmatrix} 0&\frac{1}{2}\\ \frac{1}{2}&0\end{smallmatrix}\right)\oplus \bigoplus_{j=1}^N \varepsilon_j'' 2^{n_j}\left(\begin{smallmatrix} 1&\frac{1}{2}\\ \frac{1}{2}&1\end{smallmatrix}\right)
\]
 we have
\begin{align*}
L(k,1)&:=\{1\leq j\leq d: \ell_j-k<0\text{ is odd}\}\qquad \ell(k,1):=\#L(k,1),\\
d(k)&:=k+\frac{1}{2}\sum_{\ell_j<k-1} \left(\ell_j+1-k\right)+ \sum_{m_j<k}(m_j-k)+\sum_{n_j<k} (n_j-k),\\
p(k)&:=(-1)^{\sum_{n_j<k}(n_j-k)},\\
\epsilon(k)&:=\prod_{j\in L(k-1,1)} \epsilon_j,\\
\delta(k)&:=\begin{cases}0&\text{if $\ell_j=k-1$ for some $j$},\\
1&\text{otherwise},
\end{cases}
\end{align*}
by \cite[(4.6)]{Yang}, for $n=2^{a}\beta$ (with $\beta$ odd) we have 
\[
\mu=\mu_k(n):=\beta 2^{a+3-k} -\sum_{\ell_j<k-1}\epsilon_j,
\]
and (see \cite[after (1.2)]{Yang}) $\psi(x):=e^{-2\pi i \lambda(x)}$  with $\lambda(x)$ being the embedding of $x\in\Z_p$ to $\Q/\Z$ (with denominator a power of $p$).  

If $L$ is odd and anisotropic, then by Lemma \ref{JordanDecomp-stable}, we have 
\[
L_2\cong \left<3d_L\right>\perp 2\left(\begin{smallmatrix} 1&\frac{1}{2}\\ \frac{1}{2}&1\end{smallmatrix}\right).
\]  
Hence in this case we have $\ell_1=0$ (by definition, $\ord_2(d_L)=1$ if and only if $L$ is even, and we have $L$ odd here) and $n_1=1$. Thus for $k\in \N$ we have 
\[
d(k)=k-\frac{1}{2}(k-1)+(1-k)=\frac{3-k}{2},
\]
 while $\ell(k-1,1)=1$ if $k$ is even and $\ell(k-1,1)=0$ if $k$ is odd. We furthermore have 
\[
\epsilon(k)=\begin{cases} 3d_L &\text{if $k$ is even},\\ 1&\text{if $k$ is odd},
\end{cases}
\]
$\delta(1)=0$, and $\delta(k)p(k)=(-1)^{1-k}$ for $k>1$. Finally, $\mu=2^{a+3-k}\beta-3d_L$ for $k>1$. Hence, plugging this into \eqref{eqn:Yangp=2}, we have 
\begin{multline*}
	\alpha_{2}(n,L)=1 - \sum_{k=1}^{\left\lfloor\frac{a+3}{2}\right\rfloor} \left(2, 2^{a+3-2k}\beta 3d_L  - 9d_L^2\right)_2' 2^{-k} \\
	+\sum_{k=1}^{\left\lfloor\frac{a+2}{2}\right\rfloor}  2^{-k} \psi\left(\frac{ 2^{2-2k+a}\beta-3d_L}{8}\right)\chi(4\Z_2) \left(2^{2-2k+a}\beta-3d_L\right).
\end{multline*}	
Due to the characteristic function $\chi(4\Z_2)$, every term in the last sum other than the possible term $k=\frac{2+a}{2}=1+\frac{a}{2}$ (if $a$ is even) vanishes and we moreover compute
\[
\left(2, 2^{a+3-2k}\beta 3d_L  - 9d_L^2\right)_2' = \begin{cases}
(2,-1)_2=1&\text{if }k\leq \frac{a}{2},\\
(2,3)_2=-1&\text{if }k=\frac{a+1}{2},\\
(2,6 \beta d_L-1)_2=(-1)^{\frac{\beta d_L+1}{2}}&\text{if }k=\frac{a+2}{2},\\
0&\text{if }k=\frac{a+3}{2}.
\end{cases}
\]
We thus obtain (evaluating $\sum_{k=1}^d 2^{-k} = 1-2^{-d}$)
\begin{multline*}
\alpha_{2}(n,L)=1 - \left(1-2^{-\left\lfloor\frac{a}{2}\right\rfloor}\right)  + \delta_{2\nmid a} 2^{-\frac{a+1}{2}} -(-1)^{\frac{\beta d_L+1}{2}}\delta_{2\mid a} 2^{-\frac{a+2}{2}}\\
+\delta_{2\mid a} \delta_{\beta\equiv 3d_L\pmod{4}} (-1)^{\frac{\beta-3d_L}{4}} 2^{-\frac{a+2}{2}}.
\end{multline*}
If $a$ is odd, then we get 
\[
2^{-\frac{a-1}{2}} + 2^{-\frac{a+1}{2}} =  3\cdot 2^{-\frac{a+1}{2}},
\]
as claimed.

If $a$ is even, then we have 
	\begin{multline*}
	2^{-\frac{a}{2}} -(-1)^{\frac{\beta d_L+1}{2}} 2^{-\frac{a+2}{2}} +\delta_{\beta\equiv 3 d_L\mod{4}}(-1)^{\frac{\beta-3d_L}{4}} 2^{-\frac{a+2}{2}}\\
	=\begin{cases}
		2^{-\frac{a}{2}}+ 2^{-\frac{a+2}{2}}+0 & \text{if } \beta\equiv d_L\pmod{4},\\
		2^{-\frac{a}{2}}-2^{-\frac{a+2}{2}}+(-1)^{\frac{\beta-3d_L}{4}} 2^{-\frac{a+2}{2}} & \text{if } \beta\equiv 3d_L\pmod{4},\\
	\end{cases}\\
	=\begin{cases}
	3\cdot 2^{-\frac{a+2}{2}} & \text{if } \beta\equiv d_L\pmod{4},\\
	2^{-\frac{a}{2}}-2^{-\frac{a+2}{2}}+ 2^{-\frac{a+2}{2}}=2^{-\frac{a}{2}} & \text{if } \beta\equiv 3d_L\pmod{8},\\
	2^{-\frac{a}{2}}-2^{-\frac{a+2}{2}}- 2^{-\frac{a+2}{2}}=0 & \text{if } \beta\equiv 7d_L\pmod{8}.
	\end{cases}
	\end{multline*}

For $L$ odd and isotropic, Lemma \ref{JordanDecomp-stable} implies that 
\[
L_2\cong \left<-d_L\right>\perp 2\left(\begin{smallmatrix} 0&\frac{1}{2}\\ \frac{1}{2}&0\end{smallmatrix}\right).
\]  
Hence in this case we have $\ell_1=0$ and $m_1=1$. Thus for $k\in \N$ we have 
\[
d(k)=k-\frac{1}{2}(k-1)+1-k=\frac{3-k}{2},
\]
 while $\ell(k-1,1)=1$ if $k$ is even and $\ell(k-1,1)=0$ if $k$ is odd. We furthermore have 
\[
\epsilon(k)=\begin{cases} -d_L &\text{if $k$ is even},\\ 1&\text{if $k$ is odd},
\end{cases}
\]
$\delta(1)=0$, and $\delta(k)p(k)=1$ for $k>1$. Finally, $\mu=2^{a+3-k}\beta+d_L$ for $k>1$. Hence, plugging this into \eqref{eqn:Yangp=2}, we have 
\begin{multline*}
\alpha_{2}(n,L)=1 + \sum_{k=1}^{\left\lfloor\frac{a+3}{2}\right\rfloor} \left(2, -2^{a+3-2k}\beta d_L  -d_L^2\right)_2' 2^{-k} \\
 +\sum_{k=1}^{\left\lfloor\frac{a+2}{2}\right\rfloor}  2^{-k} \psi\left(\frac{ 2^{2-2k+a}\beta+d_L}{8}\right)\chi(4\Z_2)\left(2^{2-2k+a}\beta+d_L\right).
\end{multline*}
Every term in the last sum other than the possible term $k=\frac{2+a}{2}=1+\frac{a}{2}$ (if $a$ is even) again vanishes and  
\[
\left(2, -2^{a+3-2k}\beta d_L  - d_L^2\right)_2' = \begin{cases}
	(2,-1)_2=1&\text{if }k\leq \frac{a}{2},\\
	(2,-5)_2=-1&\text{if }k=\frac{a+1}{2},\\
	(2,-2\beta d_L-1)_2=(-1)^{\frac{\beta d_L+1}{2}}&\text{if }k=\frac{a+2}{2},\\
	0&\text{if }k=\frac{a+3}{2}.
\end{cases}
\]	
We thus obtain (evaluating $\sum_{k=1}^d 2^{-k} = 1-2^{-d}$)
\begin{multline*}
	\alpha_{2}(n,L)=1 + \left(1-2^{-\left\lfloor\frac{a}{2}\right\rfloor}\right)  - \delta_{2\nmid a} 2^{-\frac{a+1}{2}} +(-1)^{\frac{\beta d_L+1}{2}}\delta_{2\mid a} 2^{-\frac{a+2}{2}}\\
	+\delta_{2\mid a} \delta_{\beta\equiv -d_L\pmod{4}} (-1)^{\frac{\beta+d_L}{4}} 2^{-\frac{a+2}{2}}.
\end{multline*}	
If $a$ is odd, then we get 
\[
2-2^{-\frac{a-1}{2}} - 2^{-\frac{a+1}{2}} =  2-3\cdot 2^{-\frac{a+1}{2}},
\]
as claimed.

If $a$ is even, then we have 
	\begin{multline*}
		2-2^{-\frac{a}{2}} +(-1)^{\frac{\beta d_L+1}{2}} 2^{-\frac{a+2}{2}} +\delta_{\beta\equiv -d_L\mod{4}}(-1)^{\frac{\beta+d_L}{4}} 2^{-\frac{a+2}{2}}\\
		=\begin{cases}
			2-2^{-\frac{a}{2}}-2^{-\frac{a+2}{2}}+0 & \text{if } \beta\equiv d_L\pmod{4},\\
			2-2^{-\frac{a}{2}}+2^{-\frac{a+2}{2}}+(-1)^{\frac{\beta+d_L}{4}} 2^{-\frac{a+2}{2}} & \text{if } \beta\equiv 3d_L\pmod{4},\\
		\end{cases}\\
		=\begin{cases}
			2-3\cdot 2^{-\frac{a+2}{2}} & \text{if } \beta\equiv d_L\pmod{4},\\
			2-2^{-\frac{a}{2}}+2^{-\frac{a+2}{2}}-2^{-\frac{a+2}{2}} =2-2^{-\frac{a}{2}} & \text{if } \beta\equiv 3d_L\pmod{8},\\
			2-2^{-\frac{a}{2}}+2^{-\frac{a+2}{2}}+2^{-\frac{a+2}{2}} =2 & \text{if } \beta\equiv 7d_L\pmod{8}.
		\end{cases}
	\end{multline*}

Finally, assume that $L$ is even. Clearly we have $\alpha_2(n,L)=0$ if $a=0$, so we assume that $a\geq 1$. As noted in the proof, we then have 
\[
\alpha_2(n,L)=2\alpha_2\left(\frac{n}{2},\left<-\frac{d_L}{2}\right>\oplus \left(\begin{smallmatrix}0&\frac{1}{2}\\ \frac{1}{2}&0\end{smallmatrix}\right)\right).
\]
Since $\ord_2(d_L)=1$ by assumption, for $L':=\left<-\frac{d_L}{2}\right>\oplus \left(\begin{smallmatrix}0&\frac{1}{2}\\ \frac{1}{2}&0\end{smallmatrix}\right)$ we now have $\ell_1=0$ and $m_1=0$. Thus for $k\in \N$ we have 
\[
d(k)=k-\frac{1}{2}(k-1)-k=\frac{1-k}{2},
\]
 while $\ell(k-1,1)=1$ if $k$ is even and $\ell(k-1,1)=0$ if $k$ is odd. We furthermore have 
\[
\epsilon(k)=\begin{cases} -\frac{d_L}{2} &\text{if $k$ is even},\\ 1&\text{if $k$ is odd},
\end{cases}
\]
$\delta(1)=0$, and $\delta(k)p(k)=1$ for $k>1$. Finally, noting that we have $a\mapsto a-1$ because we are plugging in $\frac{n}{2}$ instead of $n$, $\mu=2^{a+2-k}\beta+\frac{d_L}{2}$ for $k>1$. Hence, plugging this into \eqref{eqn:Yangp=2}, we have 
\begin{multline*}
	\alpha_{2}\left(\frac{n}{2},L'\right)=1 + \sum_{k=1}^{\left\lfloor\frac{a+2}{2}\right\rfloor} \left(2, -2^{a+2-2k}\beta \frac{d_L}{2} -\left(\frac{d_L}{2}\right)^2\right)_2' 2^{-k-1} \\
	+\sum_{k=1}^{\left\lfloor\frac{a+1}{2}\right\rfloor}  2^{-k-1} \psi\left(\frac{ 2^{1-2k+a}\beta+\frac{d_L}{2}}{8}\right)\chi(4\Z_2)\left( 2^{1-2k+a}\beta+\frac{d_L}{2}\right).
\end{multline*}	
Every term in the last sum other than the possible term $k=\frac{1+a}{2}$ (if $a$ is odd) again vanishes and  
\[
\left(2, -2^{a+2-2k}\beta \frac{d_L}{2}  - \pfrac{d_L}{2}^2\right)_2' = \begin{cases}
	(2,-1)_2=1&\text{if }k\leq \frac{a-1}{2},\\
	(2,-5)_2=-1&\text{if }k=\frac{a}{2},\\
	(2,-2\beta\frac{d_L}{2}-1)_2=(-1)^{\frac{\beta \frac{d_L}{2}+1}{2}}&\text{if }k=\frac{a+1}{2},\\
	0&\text{if }k=\frac{a+2}{2}.
\end{cases}
\]	
We thus obtain (evaluating $\sum_{k=1}^d 2^{-k} = 1-2^{-d}$)
\begin{multline*}
	\alpha_{2}\left(\frac{n}{2},L'\right)=1 + \frac{1}{2}\left(1-2^{-\left\lfloor\frac{a-1}{2}\right\rfloor}\right)  -\delta_{2\mid a} 2^{-\frac{a}{2}-1} +(-1)^{\frac{\beta \frac{d_L}{2}+1}{2}}\delta_{2\nmid a} 2^{-\frac{a+3}{2}}\\
	+\delta_{2\nmid a} \delta_{\beta\equiv -\frac{d_L}{2}\pmod{4}} (-1)^{\frac{\beta+\frac{d_L}{2}}{4}} 2^{-\frac{a+3}{2}}.
\end{multline*}
If $a\geq 2$ is even, then we get 
\[
\frac{3}{2}-2^{-\frac{a}{2}} - 2^{-\frac{a}{2}-1} =  \frac{3}{2}-3\cdot 2^{-\frac{a}{2}-1},
\]
and multiplying by $2$ gives the claim. Note that for $a=0$ the above formula vanishes, so we may combine all cases with $a$ even to obtain one uniform formula.

If $a$ is odd, then we have 
\begin{multline*}
\frac{3}{2}-2^{-\frac{a+1}{2}} +(-1)^{\frac{\beta \frac{d_L}{2}+1}{2}} 2^{-\frac{a+3}{2}} +\delta_{\beta\equiv - \frac{d_L}{2}\mod{4}}(-1)^{\frac{\beta+\frac{d_L}{2}}{4}} 2^{-\frac{a+3}{2}}\\
=\begin{cases}
	\frac{3}{2}-2^{-\frac{a+1}{2}}-2^{-\frac{a+3}{2}}& \text{if } \beta\equiv d_L/2 \pmod{4}\\
	\frac{3}{2}-2^{-\frac{a+1}{2}}+2^{-\frac{a+3}{2}} +(-1)^{\frac{\beta+\frac{d_L}{2}}{4}} 2^{-\frac{a+3}{2}}& \text{if } \beta\equiv 3d_L/2 \pmod{4}\\
\end{cases}\\
=\begin{cases}
	\frac{3}{2}-3\cdot2^{-\frac{a+3}{2}}& \text{if } \beta\equiv d_L/2 \pmod{4}\\
	\frac{3}{2}-2^{-\frac{a+1}{2}}+2^{-\frac{a+3}{2}}-2^{-\frac{a+3}{2}}=\frac{3}{2}-2^{-\frac{a+1}{2}}& \text{if } \beta\equiv 3d_L/2 \pmod{8}\\
	\frac{3}{2}-2^{-\frac{a+1}{2}}+2^{-\frac{a+3}{2}} +2^{-\frac{a+3}{2}}=\frac{3}{2}& \text{if } \beta\equiv 7d_L/2 \pmod{8}\\
\end{cases}
\end{multline*}	
Multiplying by 2 again yields the claim.
\rm
\end{extradetails}
\end{proof}

\begin{corollary}\label{cor-local-rep-num}
	Let $L$ be a ternary lattice that is stable at a prime $p$. Then the set of $p$-adic integers that are represented by $L_p$ over $\Z_p$ is
	$$
	\begin{cases}
		\Z_p & \text{if $p$ is odd and $L_p$ is isotropic},\\
		2\Z_2 & \text{if $p=2$ and $L$ is even},\\
		\Z_p\setminus(-d_L)\Z_p^2 & \text{if $L_p$ is anisotropic}.
	\end{cases}
	$$
\end{corollary}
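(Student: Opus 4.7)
The plan is to use the standard principle that an integer $n$ is represented by $L_p$ over $\Z_p$ if and only if $\alpha_p(n,L) > 0$; this reduces the corollary to reading off the vanishing locus of the explicit formulas furnished by Lemmas \ref{localdensities-stable1} and \ref{localdensities-stable2}. The argument then splits along three cases: odd $p$ with $L_p$ isotropic, $p = 2$ with $L$ even, and $L_p$ anisotropic.

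For $p$ odd, write $n = p^{\nu_p} n'$ with $\gcd(n',p) = 1$. In the isotropic cases one has either $\ord_p(d_L) = 0$, or $\ord_p(d_L) = 1$ with Lemma \ref{JordanDecomp-stable} giving $M_p = \langle 1, -1\rangle$ and hence $\left(\frac{-d_{M_p}}{p}\right) = +1$; in both situations, elementary lower bounds on the formulas of Lemma \ref{localdensities-stable1} yield $\alpha_p(n,L) \ge 1 - p^{-1} > 0$ for every $n$, so the representable set is all of $\Z_p$. In the anisotropic case Lemma \ref{JordanDecomp-stable} forces $M_p = \langle 1, -\Delta_p\rangle$ with $\left(\frac{-d_{M_p}}{p}\right) = -1$; substituting, the even-$\nu_p$ formula reduces to the strictly positive $p^{-\nu_p/2} + p^{-\nu_p/2 - 1}$, while the odd-$\nu_p$ formula becomes $p^{-(\nu_p+1)/2}\left(1 - \left(\frac{-p^{-(\nu_p+1)}n d_L}{p}\right)\right)$, which vanishes exactly when that Legendre symbol equals $+1$.

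For $p = 2$, Lemma \ref{localdensities-stable2} is processed sub-case by sub-case. When $L$ is even, $\alpha_2(n,L)$ vanishes only in the $a = 0$ sub-case (since $3 - 3 = 0$, while all other sub-cases are strictly positive), giving the representable set $2\Z_2$. When $L$ is odd and anisotropic, a direct inspection shows that the only vanishing sub-case is ``$a$ even and $\beta \equiv 7 d_L \pmod 8$''; the remaining sub-cases are all bounded below by positive quantities.

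The final step is to match each of these analytic vanishing conditions with the algebraic description $n \in (-d_L)\Z_p^2$. Writing $y = p^j u$ with $u \in \Z_p^\times$, the equation $n = -d_L y^2$ forces $\ord_p(n) = \ord_p(d_L) + 2j$ and pins down the unit part of $n$ modulo squares. For $p$ odd anisotropic this becomes precisely ``$\nu_p$ odd and $\left(\frac{-p^{-(\nu_p+1)}n d_L}{p}\right) = 1$''. For $p = 2$ with $L$ odd anisotropic (so $\ord_2(d_L) = 0$), it becomes ``$a$ even and $\beta \equiv -d_L \equiv 7 d_L \pmod 8$'', using that squares in $\Z_2^\times$ are $\equiv 1 \pmod 8$. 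The main obstacle is precisely this bookkeeping step, together with the need to verify strict positivity uniformly in every remaining sub-case of Lemma \ref{localdensities-stable2}; the rest of the proof is essentially mechanical unpacking of the explicit formulas.
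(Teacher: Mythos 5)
Your proposal is correct and takes essentially the same route as the paper: the paper's proof also reduces the corollary to the observation that $n$ is represented by $L_p$ if and only if $\a_p(n,L)\neq 0$ and then reads the answer off the explicit formulas of Lemmas \ref{localdensities-stable1} and \ref{localdensities-stable2}, with your write-up merely spelling out the case analysis and the translation of the vanishing conditions into the condition $n\in(-d_L)\Z_p^2$. The only difference is cosmetic: the paper additionally remarks that the statement can be verified directly from the Jordan decomposition in Lemma \ref{JordanDecomp-stable} since $L_p$ is maximal, an alternative you do not need.
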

\begin{proof}
	Note that $n\in\Z_p$ is represented by $L_p$ if and only if $\a_p(n,L)\neq 0$. Thus, this is a consequence of Lemmas \ref{localdensities-stable1} and \ref{localdensities-stable2}.
	The corollary may also directly be verified from the Jordan decomposition of $L_p$ in Lemma \ref{JordanDecomp-stable} since it is maximal (see \cite[91:3]{OMBook}).
\end{proof}
\subsection{Reduction theory: Watson Transformations}
We breifly introduce the so-called Watson Transformations of lattices which will play a crucial role in Section \ref{section-formula-reduction}.
Let $m$ be a positive integer. For any lattice $L$, we define
$$
\Lambda_m(L):=\{\bm{x}\in L : Q(\bm{x}+\bm{y})\equiv Q(\bm{y}) \pmod{m} \text{ for all } \bm{y} \in L\},
$$
and for any prime number $p$, we define 
$$
\Lambda_m(L_p):=\{\bm{x}\in L_p : Q(\bm{x}+\bm{y})\equiv Q(\bm{y}) \pmod{m} \text{ for all } \bm{y} \in L_p\}.
$$
Clearly, $\Lambda_m(L)$ is a sublattice of $L$ and $\Lambda_m(L)\subseteq \{\bm{x}\in L : Q(\bm{x})\equiv 0 \pmod{m}\}$ by taking $\bm{y}=\bm{0}$.
Moreover, $\Lambda_m(L)_p=\Lambda_m(L_p)$ for any prime $p$, and $\Lambda_m(L)_p=L_p$ for any prime $p$ not dividing $m$.
We refer the readers to \cite{ChanEarnest} and \cite{ChanOh03} for more properties of the operators $\Lambda_m$.
We denote by $\lambda_m(L)$ the primitive lattice obtained by scaling the quadratic map on $\Lambda_m(L)$ suitably.
The mappings $L\mapsto \lambda_m(L)$ are called Watson transformations.

Now, we describe how $\Lambda_m$ transform $L$ when $m$ is prime or $4$.
For any prime $p$, we write $L_p=M_p\perp U_p$, where $M_p$ is the leading Jordan component and $\mathfrak{s}(U_p) \subseteq p\mathfrak{s}(M_p)$.
\begin{lemma}\label{even-transform}
	Suppose that $M_p$ is even unimodular and $\n(U_p)\subseteq 2p\Z_p$. Then
	$$
	\Lambda_{ep}(L)_p=pM_p\perp U_p,
	$$
	where $e=2$ if $p=2$, and $1$ otherwise. In particular,
	\begin{enumerate}[]
		\item[{\rm (a)}] if $\ord_p(d_L)\ge 2$, then $\ord_p(d_{\lambda_{ep}(L)})<\ord_p(d_L)$,
		\item[{\rm (b)}] if $m$ is a squarefree positive integer and $\ord_p(d_L)\le 1$ for all $p \mid m$, then $\lambda_m^2(L)=L$.
	\end{enumerate}
\end{lemma}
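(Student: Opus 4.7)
The plan is to prove the identity $\Lambda_{ep}(L)_p = pM_p \perp U_p$ first, and then extract (a) and (b) from explicit Jordan bookkeeping combined with the primitivizing rescaling in the Watson transformation. Since $\Lambda_m(L)_q = L_q$ for $q \nmid m$ and $\Lambda_m(L)_p = \Lambda_m(L_p)$, everything localizes at $p$.

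For the main identity, decompose any $\bm{x} \in L_p$ as $\bm{x} = \bm{x}_M + \bm{x}_U$ with $\bm{x}_M \in M_p$, $\bm{x}_U \in U_p$, and analogously $\bm{y} = \bm{y}_M + \bm{y}_U$. The congruence $Q(\bm{x}+\bm{y}) \equiv Q(\bm{y}) \pmod{ep}$ is equivalent to $Q(\bm{x}) + 2B(\bm{x},\bm{y}) \in ep\Z_p$. Orthogonality of the two blocks splits the bilinear term, and the hypotheses $\mathfrak{s}(U_p) \subseteq p\Z_p$ and $\n(U_p) \subseteq 2p\Z_p$ place $2B(\bm{x}_U,\bm{y}_U)$ and $Q(\bm{x}_U)$ automatically in $ep\Z_p$, whether $p$ is odd ($e=1$) or $p=2$ ($e=2$). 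Hence the condition reduces to $Q(\bm{x}_M) + 2B(\bm{x}_M, \bm{y}_M) \in ep\Z_p$ for all $\bm{y}_M \in M_p$.

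Setting $\bm{y}_M = 0$ first forces $Q(\bm{x}_M) \in ep\Z_p$, and then varying $\bm{y}_M$ yields $2B(\bm{x}_M, M_p) \subseteq ep\Z_p$. For $p$ odd, $2$ is a unit and the unimodularity of $M_p$ identifies this with $\bm{x}_M \in pM_p^{\#} = pM_p$. For $p=2$, even-unimodularity of $M_2$ is essential: the condition on $Q$ becomes the non-trivial $Q(\bm{x}_M) \in 4\Z_2$, and the bilinear condition $B(\bm{x}_M, M_2) \subseteq 2\Z_2$ together with $M_2^{\#} = M_2$ gives $\bm{x}_M \in 2M_2 = pM_p$. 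The converse implication follows directly from $Q(pM_p) \subseteq p^2\n(M_p) \subseteq ep\Z_p$ (the even-ness of $M_2$ supplies the extra factor of $2$ when $p=2$) and $2B(pM_p, M_p) \subseteq 2p\Z_p \subseteq ep\Z_p$, establishing $\Lambda_{ep}(L)_p = pM_p \perp U_p$.

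Parts (a) and (b) then follow by tracking the Jordan decomposition through the primitivizing rescaling defining $\lambda_{ep}$. The lattice $pM_p \perp U_p$ has discriminant of $p$-valuation $2\operatorname{rank}(M_p) + \ord_p(d_L)$, and its scale equals $p\Z_p$ precisely when $L_p$ has a $p$-modular Jordan component inside $U_p$, and $p^2\Z_p$ otherwise. For (a), a case analysis on $\operatorname{rank}(M_p)$ and the Jordan shape of $U_p$ under $\ord_p(d_L) \geq 2$ shows that the $p$-valuation of the discriminant strictly drops after rescaling, the tightest case being a decrease by $1$ at odd $p$ when $U_p$ begins with a $p$-modular component. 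For (b), the hypothesis $\ord_p(d_L) \leq 1$ restricts the local Jordan structure enough that after the Watson rescaling, $\lambda_p(L)_p$ has its unimodular and $p$-modular blocks essentially swapped; applying $\lambda_p$ a second time undoes this swap, giving $\lambda_m^2(L) = L$ locally at every $p \mid m$ while being the identity elsewhere. The main obstacle is the $p=2$ case of the initial identity, where the modulus $4$, the even-unimodular hypothesis on $M_2$, and the norm condition $\n(U_2) \subseteq 4\Z_2$ all must align correctly; once that is done, the remaining analysis is routine bookkeeping with Jordan decompositions.
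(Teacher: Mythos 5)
Your proposal is correct in its essentials, but it takes a genuinely different route from the paper: the paper offers no argument at all for this lemma, deferring entirely to Chan and Oh (\cite[Lemma 2.1]{ChanOh14}, with the remark that the $m=4$ case is similar), whereas you reconstruct the proof directly from the definition of $\Lambda_m$. Your computation is the one hiding behind that citation and is carried out correctly: splitting $Q(\bm{x})+2B(\bm{x},\bm{y})$ along $L_p=M_p\perp U_p$, absorbing the $U_p$-contributions into $ep\Z_p$ via $\mathfrak{s}(U_p)\subseteq p\Z_p$ and $\n(U_p)\subseteq 2p\Z_p$, and converting $2B(\bm{x}_M,M_p)\subseteq ep\Z_p$ into $\bm{x}_M\in pM_p^{\#}=pM_p$ by unimodularity, with the modulus $4$ and the evenness of $M_2$ handling $p=2$. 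What your approach buys is a self-contained and transparent explanation of why each hypothesis is needed; what the citation buys is brevity and access to Chan--Oh's surrounding statements. Your bookkeeping for (a) is also right: $\ord_p(d_{\Lambda_{ep}(L)})=2\,\mathrm{rank}(M_p)+\ord_p(d_L)$, the scale of $pM_p\perp U_p$ is $p\Z_p$ or $p^2\Z_p$ according to whether $U_p$ has a $p$-modular component, and rescaling a ternary lattice by $p^{-k}$ lowers the discriminant valuation by $3k$, which gives a strict drop in every case.

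Part (b) is the thin spot and would need tightening if your argument were to replace the citation. First, when $2\mid m$ the operator in (b) is $\Lambda_2$ (modulus $2$), not the $\Lambda_4$ of your displayed identity; since the lemma's hypothesis at $p=2$ forces $\n(U_2)\subseteq 4\Z_2$ and hence $\ord_2(d_L)\ge 2$, the genuine content of (b) lives at odd primes (at $p=2$ one instead observes that $\Lambda_2$ fixes an even lattice). Second, at an odd $p$ your ``swap'' picture is the right mechanism only when $\ord_p(d_L)=1$ (where $\lambda_p(L)_p\cong\la\varepsilon\ra\perp M_p^{p}$ and a second application returns $M_p\perp\la\varepsilon p\ra$); when $\ord_p(d_L)=0$ one has $\Lambda_p(L)_p=pL_p$ and a single primitivization already undoes it, so there is no swap to speak of. Finally, the asserted equality $\lambda_m^2(L)=L$ (rather than a mere isometry) rests on the fact that the total rescaling factor accumulated over the two applications is a perfect square, so the doubly scaled space is canonically identified with $V$ by $\bm{x}\mapsto \bm{x}/d$; this deserves an explicit sentence rather than being folded into ``routine bookkeeping.'' None of these points derails the argument, but they are exactly the places where a reader would stumble.
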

\begin{proof}
	See \cite[Lemma 2.1]{ChanOh14}. 
\end{proof}
We note that, in particular, for an odd prime $p$ one may recursively use part (a) of Lemma \ref{even-transform} to reduce the power of $p$ dividing the discriminant of the lattice until the order is at most one for the reduced lattice. An overview of the corresponding reduction for $p=2$ is given in the next lemma.
\begin{lemma}\label{odd-transform}
	Suppose that $L$ is a ternary integral lattice with $\n (L_2)=\Z_2$.
	\begin{enumerate}[leftmargin=*]
		\item[{\rm (a)}] If $\mathrm{rank}(M_2)=2$ and $\mathfrak{s}(U_2)\subseteq 2\Z_2$, then 
		$$
		\lambda_2(L)_2\cong \begin{cases}
			M_2\perp U_2^{\frac{1}{2}} & \text{if } d_M\equiv 1 \pmod{8},\\
			M_2^3\perp U_2^{\frac{1}{2}} & \text{if } d_M\equiv 5 \pmod{8},\\			
			\left(\begin{smallmatrix} 2&1\\1&2\end{smallmatrix}\right) \perp U_2^{\frac{1}{2}} & \text{if } d_M\equiv 3 \pmod{8},\\
			\left(\begin{smallmatrix} 0&1\\1&0\end{smallmatrix}\right)\perp U_2^{\frac{1}{2}} & \text{if } d_M\equiv 7 \pmod{8}.
		\end{cases}
		$$
		\item[{\rm (b)}] If $\mathrm{rank}(M_2)=1$ and $\mathfrak{s}(U_2)\subseteq 4\Z_2$, then $\lambda_2(L)_2 \cong M_2 \perp U_2^{\frac{1}{4}}$.
		\item[{\rm (c)}] If $\mathrm{rank}(M_2)=1$ and $\mathfrak{s}(U_2)= 2\Z_2$, then $\lambda_2(L)_2 \cong M_2^2 \perp U_2^{\frac{1}{2}}$.
	\end{enumerate}
	In particular, if $\ord_2(d_L)\ge2$, then $\ord_2(d_{\lambda_2(L)})<\ord_2 (d_L)$; and if $ord_2(d_L)=1$, then $\lambda_2(L)_2$ is unimodular.
\end{lemma}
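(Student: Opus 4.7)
The plan is to exploit the integrality of $L$: since $B(\bm{x},\bm{y})\in\Z$ for all $\bm{x},\bm{y}\in L$, we have $2B(\bm{x},\bm{y})\equiv 0\pmod{2}$ automatically, so that the defining condition of $\Lambda_2(L_2)$ collapses to $Q(\bm{x})\equiv 0\pmod{2}$. In each of the three cases I would use the Jordan-style decomposition $L_2=M_2\perp U_2$ to write down a basis of $\Lambda_2(L_2)$ adapted to this condition, read off its Gram matrix, determine the scale of the resulting lattice, and then rescale by the appropriate power of $2$ to produce the primitive lattice $\lambda_2(L)_2$.

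Cases (b) and (c) I would dispatch first. In (b), writing $M_2=\langle c\rangle$ with $c\in\Z_2^*$, the hypothesis $\mathfrak{s}(U_2)\subseteq 4\Z_2$ forces $Q(\bm{u})\in 4\Z_2$ for $\bm{u}\in U_2$, so the condition reduces to $x_1\in 2\Z_2$; thus $\Lambda_2(L)_2\cong\langle 4c\rangle\perp U_2$, which has scale $4\Z_2$, and rescaling by $1/4$ gives $M_2\perp U_2^{1/4}$. For (c), splitting off the leading part of $U_2$ as $\langle 2d\rangle\perp U_2'$ with $\mathfrak{s}(U_2')\subseteq 4\Z_2$, the analogous computation gives $\Lambda_2(L)_2\cong\langle 4c\rangle\perp\langle 2d\rangle\perp U_2'$ of scale $2\Z_2$, and rescaling by $1/2$ yields $\langle 2c\rangle\perp\langle d\rangle\perp (U_2')^{1/2}=M_2^2\perp U_2^{1/2}$.

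Case (a) is the main work. Since $M_2$ is odd unimodular of rank $2$ it diagonalizes as $\langle a,b\rangle$ with $a,b\in\Z_2^*$, and the condition $ax_1^2+bx_2^2\equiv 0\pmod{2}$ forces $x_1\equiv x_2\pmod{2}$. A basis of $\Lambda_2(L_2)\cap M_2$ is then $\{\bm{e_1}+\bm{e_2},\bm{e_1}-\bm{e_2}\}$, giving the Gram matrix $\left(\begin{smallmatrix}a+b&a-b\\a-b&a+b\end{smallmatrix}\right)$ of scale $2\Z_2$ and discriminant $4ab$. Rescaling by $1/2$ produces the binary form with Gram matrix $\left(\begin{smallmatrix}(a+b)/2&(a-b)/2\\(a-b)/2&(a+b)/2\end{smallmatrix}\right)$ and discriminant $d_M=ab$. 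When $d_M\equiv 3$ or $7\pmod{8}$ one has $a\not\equiv b\pmod{4}$, so $(a+b)/2$ is even and $(a-b)/2$ is odd, making this form even unimodular; the two even unimodular binary $\Z_2$-lattices $\mathbb{H}$ and $\mathbb{A}$ are distinguished by their discriminants $-1$ and $3$, so the output is $\mathbb{H}$ when $d_M\equiv 7\pmod{8}$ and $\mathbb{A}$ when $d_M\equiv 3\pmod{8}$.

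The main obstacle is the sub-case $d_M\equiv 1$ or $5\pmod{8}$, where $(a+b)/2$ is a unit and the form is odd. Diagonalizing (by using the first basis vector to clear the off-diagonal entry) yields $\langle (a+b)/2,\,2ab/(a+b)\rangle$, and one must show this is $\Z_2$-isometric to $M_2=\langle a,b\rangle$ when $d_M\equiv 1\pmod{8}$ and to $M_2^3=\langle 3a,3b\rangle$ when $d_M\equiv 5\pmod{8}$. Since odd unimodular binary $\Z_2$-lattices with fixed discriminant split into two isometry classes (distinguished, for example, by whether they represent $1$, equivalently by the Hasse invariant at $2$), the proof reduces to computing this finer invariant for $\langle (a+b)/2,\,2ab/(a+b)\rangle$ and checking that it matches $\langle a,b\rangle$ when $d_M\equiv 1\pmod{8}$ and $\langle 3a,3b\rangle$ when $d_M\equiv 5\pmod{8}$. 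The ``in particular'' statement is then immediate from the case analysis: in each transformation a factor of $2$ is stripped from the appropriate Jordan component, so $\ord_2(d_L)$ strictly decreases when $\ord_2(d_L)\geq 2$, and the output is unimodular when $\ord_2(d_L)=1$.
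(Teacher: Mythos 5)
Your argument is essentially correct, but it is a genuinely different route from the paper: the paper does not prove this lemma at all, it simply cites Chan--Oh \cite[Lemma 2.2]{ChanOh14}, whereas you reprove it by direct computation. Your starting observation is exactly the right one (integrality makes $2B(\bm{x},\bm{y})\equiv 0\pmod 2$ automatic, so $\Lambda_2(L_2)=\{\bm{x}\in L_2: Q(\bm{x})\in 2\Z_2\}$), and the case-by-case determination of a basis, Gram matrix, scale, and rescaling factor is the standard computation underlying the cited lemma. In case (a) your identification of the even-unimodular outputs via the discriminants of $\left(\begin{smallmatrix}0&1\\1&0\end{smallmatrix}\right)$ and $\left(\begin{smallmatrix}2&1\\1&2\end{smallmatrix}\right)$ is correct, and the ``in particular'' statement does follow by inspection of the three cases. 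What your route buys is a self-contained verification; what the citation buys is brevity and the companion statements in Chan--Oh's framework.

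Two loose ends are worth tightening. First, in case (a) with $d_M\equiv 1,5\pmod 8$ you only reduce the identification of $\left\la \tfrac{a+b}{2},\,\tfrac{2ab}{a+b}\right\ra$ with $M_2$ resp. $M_2^3$ to ``compute the finer invariant and check''; this check should actually be carried out. It does work: for unit discriminant $\equiv 1$ or $5\pmod 8$ there are exactly two classes of odd unimodular binary $\Z_2$-lattices, distinguished by the Hasse symbol (equivalently by which units mod squares are represented), and since $a\equiv b\pmod 4$ in this sub-case, a short residue computation mod $8$ shows the Hasse symbol of $\left\la \tfrac{a+b}{2},\,\tfrac{ab}{(a+b)/2}\right\ra$ agrees with that of $\la a,b\ra$ when $ab\equiv 1\pmod 8$ and with that of $\la 3a,3b\ra$ when $ab\equiv 5\pmod 8$. (Be a little careful with the ``two classes'' claim: for discriminant $\equiv 3,7\pmod 8$ there is only one class; it is two precisely in the residues you need.) Second, in case (c) the splitting $U_2=\la 2d\ra\perp U_2'$ with $\mathfrak{s}(U_2')\subseteq 4\Z_2$ need not exist, e.g.\ $U_2$ could be an even $2$-modular plane such as $2\left(\begin{smallmatrix}2&1\\1&2\end{smallmatrix}\right)$; but the splitting is also unnecessary, since $\n(U_2)\subseteq\mathfrak{s}(U_2)=2\Z_2$ already forces the condition to be $x_1\in 2\Z_2$, giving $\Lambda_2(L)_2\cong\la 4c\ra\perp U_2$ and hence $M_2^2\perp U_2^{\frac12}$ after scaling by $\tfrac12$ in all sub-cases.
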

\begin{proof}
	See \cite[Lemma 2.2]{ChanOh14}. 
\end{proof}
Combining Lemmas \ref{even-transform} and \ref{odd-transform}, Chan and Oh \cite[Corollary 2.4]{ChanOh14} proved the following.
\begin{corollary}\label{cor:finitereduction}
	A primitive ternary lattice $L$ can be transformed, via a finite sequence of Watson transformations at the primes dividing $d_L$ or at $4$, to a stable lattice.
\end{corollary}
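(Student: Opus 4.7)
The plan is an induction on $d_L$, reducing $\ord_p(d_L)$ at one prime at a time via Watson transformations until the stability conditions are met at every prime. The essential local observation $\Lambda_m(L)_{p'}=L_{p'}$ for $p'\nmid m$ ensures that work done at one prime does not affect any other, so primes may be handled independently; only the finitely many primes dividing $d_L$ need treatment, and primitivity is preserved throughout since $\lambda_m$ is defined by scaling back to a primitive lattice.

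For an odd prime $p$ with $\ord_p(d_L)\geq 2$, write $L_p=M_p\perp U_p$ as in the setup preceding Lemma \ref{even-transform}. Since $2\in\Z_p^{\times}$ at an odd prime, the leading unimodular Jordan component $M_p$ is automatically even, and the scale inclusion $\mathfrak{s}(U_p)\subseteq p\mathfrak{s}(M_p)=p\Z_p$ forces $\n(U_p)\subseteq p\Z_p=2p\Z_p$. Lemma \ref{even-transform}(a) then guarantees $\ord_p(d_{\lambda_p(L)})<\ord_p(d_L)$, and iterating $\lambda_p$ brings $\ord_p(d_L)$ down to at most $1$, which is exactly the stability condition at any odd prime.

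The prime $p=2$ requires a case split on parity. If $L_2$ is even with $\ord_2(d_L)\geq 2$, the leading Jordan component has rank $2$ (even unimodular rank-$1$ lattices do not exist at $2$), and the trailing rank-$1$ component $U_2$ satisfies $\mathfrak{s}(U_2)=2^a\Z_2$ with $a=\ord_2(d_L)\geq 2$, so $\n(U_2)\subseteq 4\Z_2=2p\Z_2$; Lemma \ref{even-transform} with $e=2$ then applies and $\lambda_4$ strictly decreases $\ord_2(d_L)$ while preserving evenness. If instead $L_2$ is odd, so $\n(L_2)=\Z_2$, Lemma \ref{odd-transform} furnishes $\lambda_2$, which either strictly decreases $\ord_2(d_L)$ whenever $\ord_2(d_L)\geq 2$ or, when $\ord_2(d_L)=1$, yields a unimodular $\lambda_2(L)_2$. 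Iterating terminates either at $\ord_2(d_L)=0$ or at $\ord_2(d_L)=1$ with $L_2$ even, both of which meet the stability condition at $2$.

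The chief technical care lies at $p=2$, where one must track simultaneously the power of $2$ in the discriminant and the parity of $L_2$ under the two kinds of Watson transformations involved. Once stability has been achieved at each prime dividing the original $d_L$, and since transformations at other primes do not disturb these achievements by the local invariance, the resulting lattice $L'$ satisfies $\ord_p(d_{L'})\leq 1$ at every odd prime together with the parity--discriminant equivalence at $2$, hence $L'$ is stable. The procedure terminates after finitely many Watson transformations because each step strictly decreases $d_L$ at the working prime while leaving other primes unchanged, so the total discriminant is strictly monotone and the process is well-founded.
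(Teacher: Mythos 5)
Your proof is correct and follows exactly the route the paper indicates: the paper does not prove this corollary itself but cites Chan--Oh, noting that it follows by combining Lemmas \ref{even-transform} and \ref{odd-transform}, which is precisely the reduction you carry out prime by prime. The one point worth making fully explicit is that $\lambda_2$ applied to an odd lattice can produce an even one (so the $\lambda_2$ and $\lambda_4$ procedures at $2$ may alternate), but since each step strictly decreases $\ord_2(d_L)$ this affects neither termination nor the conclusion.
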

To describe the relationship between the reduction theory via Corollary \ref{cor:finitereduction} and genera, one needs to investigate how isometries commute with the Watson transformations at primes dividing $d_L$ or at $4$.

\begin{lemma}\label{sigma-lambda-commute}
	Let $L$ be a lattice on a quadratic space $V$ and let $m$ be a prime or $4$. Then
	\begin{enumerate}
		\item[{\rm (1)}] $\sigma\circ \Lambda_m(L)=\Lambda_m\circ \sigma(L)$ for any $\sigma \in O(V)$. In particular, the group $O(L)$ is a subgroup of $O(\Lambda_k(L))$.
		\item[{\rm (2)}] $\Lambda_m$ induces a surjective function from $\gen(L)/_\sim$ onto $\gen(\Lambda_m(L))/_\sim$.
	\end{enumerate}
\end{lemma}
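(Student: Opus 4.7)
For part (1), I would argue directly from the definitions. Fix $\sigma\in O(V)$ and take $\bm{x}\in \Lambda_m(L)$. For any $\bm{y}'\in \sigma(L)$ we can write $\bm{y}'=\sigma(\bm{y})$ with $\bm{y}\in L$, and since $\sigma$ preserves $Q$,
\[
Q(\sigma(\bm{x})+\bm{y}')=Q(\sigma(\bm{x}+\bm{y}))=Q(\bm{x}+\bm{y})\equiv Q(\bm{y})=Q(\bm{y}')\pmod{m},
\]
so $\sigma(\bm{x})\in \Lambda_m(\sigma(L))$, giving $\sigma(\Lambda_m(L))\subseteq \Lambda_m(\sigma(L))$. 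The reverse inclusion follows by running the same argument with $\sigma^{-1}$ and $\sigma(L)$ in the roles of $\sigma$ and $L$. The ``in particular'' clause is then obtained by specializing to $\sigma\in O(L)$: here $\sigma(L)=L$ forces $\sigma(\Lambda_m(L))=\Lambda_m(L)$, and since $\sigma$ already preserves $Q$ on $V$, it restricts to an element of $O(\Lambda_m(L))$.

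For well-definedness in (2), the identical calculation carried out over $\Z_p$ yields $\Lambda_m(\tau(L_p))=\tau(\Lambda_m(L_p))$ for every $\tau\in O(V_p)$. Given $L'\in \gen(L)$, I would choose local isometries $\tau_p\in O(V_p)$ with $\tau_p(L_p)=L'_p$ (extending a given local isometry from $L_p$ to $L'_p$ by Witt's extension theorem); combined with the already-noted identity $\Lambda_m(L')_p=\Lambda_m(L'_p)$, this gives $\Lambda_m(L')_p\cong \Lambda_m(L)_p$ at every $p$, so $\Lambda_m(L')\in \gen(\Lambda_m(L))$. Compatibility with global isometries is exactly part (1), so $\Lambda_m$ descends to a well-defined map $\gen(L)/_\sim\to \gen(\Lambda_m(L))/_\sim$.

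Surjectivity is the real content of (2), and I would establish it via the adelic description of the genus. Given $K\in \gen(\Lambda_m(L))$, choose an adelic isometry $\phi=(\phi_p)_p\in O_{\mathbb{A}}(V)$ with $\phi_p(\Lambda_m(L)_p)=K_p$ at every $p$ and $\phi_p\in O(\Lambda_m(L)_p)$ for almost all $p$. Define the local prescriptions $L'_p:=\phi_p(L_p)$. Because $\Lambda_m(L)_p=L_p$ whenever $p\nmid m$, one has $\phi_p\in O(L_p)$ at almost every prime, hence $L'_p=L_p$ outside a finite set and the local data assemble into a genuine global lattice $L'\in \gen(L)$. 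Invoking the local version of (1) then gives
\[
\Lambda_m(L')_p=\Lambda_m(\phi_p(L_p))=\phi_p(\Lambda_m(L_p))=\phi_p(\Lambda_m(L)_p)=K_p
\]
at every prime, so $\Lambda_m(L')=K$ as sublattices of $V$ and in particular $[\Lambda_m(L')]=[K]$. The main obstacle is the passage from the local prescriptions $L'_p$ to a bona fide global lattice; this relies on the almost-everywhere stabilization of adelic isometries together with the fact that $m$ involves only finitely many primes, which is precisely what forces $L'_p=L_p$ off a controlled finite set of primes.
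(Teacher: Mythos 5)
Your proof is correct, and it takes the same route that the paper delegates to the citation of Chan--Oh: part (1) by direct computation from the definition of $\Lambda_m$, and part (2) by combining the local version of (1) with Witt extension and the standard local--global gluing of lattices (adelic construction of $L'$ with prescribed localizations agreeing with $L_p$ at almost all primes). A pleasant feature, consistent with the paper's remark that ``the case $m=4$ may be shown similarly,'' is that your argument never uses primality of $m$, so it covers $m$ prime and $m=4$ uniformly.
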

\begin{proof}
	See \cite{ChanOh14}. The case when $m=4$ may be shown similarly.
\end{proof}

Let $L$ be a lattice and let $m$ be a prime or $4$. For any lattice $N$ in $\gen(\Lambda_m(L))$, we define
$$
\Gamma_m^L(N):=\{T\in \gen (L) : \Lambda_m(T)=N\} \quad \text{and}
$$
$$
\Gamma_m^L(N)/_\sim :=\{[T]\in \gen (L)/_\sim : \Lambda_m(T)=N\}.
$$
Clearly, 
\begin{equation}\label{partition-genus}
	\gen(L)/_\sim = \bigsqcup_{[N]\in\gen(\Lambda_m(L))/_\sim} \Gamma_m^L(N)/_\sim.	
\end{equation}

Let $\sigma\in O(N)$ and $T \in \Gamma_m^L(N)$. 
Since $\Lambda_p(\sigma(T))=\sigma(\Lambda_p(T))=\sigma(N)=N$, $\sigma(T)$ belongs to $\Gamma_m^L(N)$. Hence, $O(N)$ act on $\Gamma_m^L(N)$.
Moreover, by Lemma \ref{sigma-lambda-commute}, if $\tau(T)\in \Gamma_m^L(N)$ for some $\tau \in O(V)$, then $\tau\in O(N)$. Therefore, we have
\begin{equation}\label{sum-orbit}
  	\left|\Gamma_m^L(N)\right|=\sum_{[T]\in \Gamma_m^L(N)/_\sim} \frac{o(N)}{o(T)}.	
\end{equation}

Furthermore, we have the following.
\begin{lemma}\label{weight-ratio}
	Let $m$ be a prime or $4$. For any $N \in \gen(\Lambda_m(L))$
	$$
	\left|\Gamma_m^L(N)\right|=\frac{w(L)}{w(\Lambda_m(L))}.
	$$
\end{lemma}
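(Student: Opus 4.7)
The plan is to first establish that $|\Gamma_m^L(N)|$ is independent of the representative $N\in\gen(\Lambda_m(L))$, and then read off the mass ratio from the partition \eqref{partition-genus} together with the orbit-counting identity \eqref{sum-orbit}. Granting the invariance, if $c$ denotes the common value of $|\Gamma_m^L(N)|$, then
\[
w(L)=\sum_{[T]\in\gen(L)/_\sim}\frac{1}{o(T)}=\sum_{[N]\in\gen(\Lambda_m(L))/_\sim}\frac{|\Gamma_m^L(N)|}{o(N)}=c\cdot w(\Lambda_m(L)),
\]
from which the formula $c=w(L)/w(\Lambda_m(L))$ follows.

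To establish the invariance, I would use a local argument based on the observation that $m$ has a unique prime divisor (either $m$ itself if $m$ is prime, or $2$ if $m=4$); denote it by $p_0$. Since $\Lambda_m$ commutes with localization and satisfies $\Lambda_m(T_p)=T_p$ for every prime $p\neq p_0$, the equation $\Lambda_m(T)=N$ forces $T_p=N_p$ at all $p\neq p_0$ and imposes a genuine condition only at $p=p_0$. Setting
\[
A(N_{p_0}):=\{T_{p_0}\subseteq V_{p_0}:\ T_{p_0}\cong L_{p_0},\ \Lambda_m(T_{p_0})=N_{p_0}\},
\]
the standard reconstruction of a $\Z$-lattice from its $\Z_p$-localizations yields $|\Gamma_m^L(N)|=|A(N_{p_0})|$. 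For $N,N'\in\gen(\Lambda_m(L))$, a local isometry $N_{p_0}\cong N'_{p_0}$ extends (by Witt's theorem) to some $\sigma\in O(V_{p_0})$; by Lemma \ref{sigma-lambda-commute}(1), $\sigma\circ\Lambda_m=\Lambda_m\circ\sigma$, so $T_{p_0}\mapsto\sigma(T_{p_0})$ is a bijection $A(N_{p_0})\to A(N'_{p_0})$. Hence $|\Gamma_m^L(N)|=|\Gamma_m^L(N')|$.

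The main subtlety is the local-to-global reconstruction: one must verify that any $T_{p_0}\in A(N_{p_0})$, together with $T_p=N_p$ for all $p\neq p_0$, is the localization of a unique global lattice $T\in\gen(L)$. Uniqueness of $T$ is standard, while $T\in\gen(L)$ holds because $T_{p_0}\cong L_{p_0}$ is enforced in the definition of $A(N_{p_0})$ and $T_p=N_p\cong\Lambda_m(L)_p=L_p$ for $p\neq p_0$ since $\Lambda_m$ is trivial outside $p_0$. An alternative but more computational route would be to apply the Minkowski--Siegel formula (Theorem \ref{Minkowski-Siegel}) and compare the ratios of local densities $\alpha_p(L,L)/\alpha_p(\Lambda_m(L),\Lambda_m(L))$ prime-by-prime, but the lattice-counting argument sketched above is cleaner and avoids detailed density computations.
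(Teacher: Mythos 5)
Your argument is correct, and it is worth noting that the paper does not actually prove this lemma in-house: its ``proof'' is a citation to \cite[Proposition 3.3]{ChanOh14} together with the remark that the case $m=4$ is similar. Your proposal therefore supplies a self-contained argument, and its structure is the natural one: constancy of the fiber size $\left|\Gamma_m^L(N)\right|$ over $N\in\gen(\Lambda_m(L))$, followed by summing \eqref{sum-orbit} over the partition \eqref{partition-genus} to convert that constant into the mass ratio $w(L)/w(\Lambda_m(L))$. The localization step is sound: since $\Lambda_m(T)_p=T_p$ for $p\nmid m$, the condition $\Lambda_m(T)=N$ pins down $T_p=N_p$ away from the unique prime $p_0\mid m$, and the standard existence/uniqueness of a global lattice with prescribed localizations (differing from $N$ only at $p_0$) gives the bijection $\Gamma_m^L(N)\leftrightarrow A(N_{p_0})$; one should just add the (easy) verification that the reconstructed $T$ satisfies $\Lambda_m(T)=N$ globally, which follows because $\Lambda_m(T)$ and $N$ are lattices on $V$ with the same localizations at every prime. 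Two cosmetic points: Witt's theorem is not needed, since an isometry $N_{p_0}\to N'_{p_0}$ between full-rank lattices extends uniquely by linearity to an element of $O(V_{p_0})$; and Lemma \ref{sigma-lambda-commute}(1) is stated for $\sigma\in O(V)$, so you are really invoking its local analogue for $\sigma\in O(V_{p_0})$ --- harmless, as the same computation proves it, but it should be said. What your route buys is a uniform treatment of $m$ prime and $m=4$ (the paper only asserts the latter ``may be shown similarly'') and the avoidance of any mass-formula or local-density computation via Theorem \ref{Minkowski-Siegel}, which is the more laborious alternative you correctly set aside.
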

\begin{proof}
	See \cite[Proposition 3.3]{ChanOh14}. The case when $m=4$ may be shown similarly.
\end{proof}

\section{Class number formulas for stable lattices}\label{sec:stable}
In this section, we prove our main theorem regarding the class number formula for the stable lattices.
%
%
%
%
%
%
We first describe some properties of primitive class numbers and Hurwitz class numbers.
Let $d$ be a negative integer.
Let $h(d)$ denote the number of $\SL_2(\Z)$-equivalence classes of binary quadratic forms $ax^2+bxy+cy^2$ with $b^2-4ac=d$ and $(a,b,c)=1$.
We call $h(d)$ {\em the primitive class number of discriminant $d$}.
Let $w_d$ denote the number of automorphisms of such a binary quadratic form.
Note that $w_d=6$ if $d=-3$, $w_d=4$ if $d=-4$, and $w_d=2$ otherwise.

From a well-known bijection between $\SL_2(\Z)$-equivalence classes of binary quadratic forms and Picard group of orders in the ring of integers of integers of quadratic fields, together with the relation between the orders of Picard groups, we have the following lemma.

\begin{lemma}\label{lem-primclassnum-relation}
	Let $d<0$ be a fundamental discriminant and let $f\in\N$. Then we have
	$$ 
	h(df^2) = \frac{h(d)f}{{w_{d}}/w_{df^2}}\prod_{p|f} \left(1-\legendre{d}{p}\frac{1}{p}\right).
	$$
\end{lemma}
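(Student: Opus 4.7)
The plan is to reduce the lemma to the classical class number formula for non-maximal orders in imaginary quadratic fields, exploiting the bijection between $\SL_2(\Z)$-equivalence classes of primitive positive-definite binary quadratic forms of (negative) discriminant $D$ and the ideal class group $\operatorname{Pic}(\mathcal{O}_D)$, where $\mathcal{O}_D$ denotes the unique quadratic order of discriminant $D$ in $K := \Q(\sqrt{d})$. Under this bijection one has $h(D) = |\operatorname{Pic}(\mathcal{O}_D)|$, and the automorphism count of a form of discriminant $D$ coincides with the unit count $|\mathcal{O}_D^\times| = w_D$.

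With $d$ a fundamental discriminant, the order $\mathcal{O}_d$ is the maximal order $\mathcal{O}_K$, while the order $\mathcal{O}_{df^2}$ is precisely the order of conductor $f$, namely $\mathcal{O}_f := \Z + f\mathcal{O}_K$ (this is immediate from comparing discriminants, since $\operatorname{disc}(\Z + f\mathcal{O}_K) = f^2 \operatorname{disc}(\mathcal{O}_K) = df^2$). Thus the lemma is equivalent to the identity
$$
|\operatorname{Pic}(\mathcal{O}_f)| \;=\; \frac{|\operatorname{Pic}(\mathcal{O}_K)|\cdot f}{[\mathcal{O}_K^\times : \mathcal{O}_f^\times]} \prod_{p \mid f}\left(1 - \legendre{d}{p}\frac{1}{p}\right).
$$

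Next, I would invoke this identity directly from the standard literature (see, for instance, Cox, \emph{Primes of the Form $x^2 + ny^2$}, Theorem 7.24, or Neukirch, \emph{Algebraic Number Theory}, Theorem I.12.12). The proof there proceeds by writing down the exact sequence
$$
1 \longrightarrow \mathcal{O}_K^\times / \mathcal{O}_f^\times \longrightarrow (\mathcal{O}_K/f\mathcal{O}_K)^\times / (\Z/f\Z)^\times \longrightarrow \operatorname{Pic}(\mathcal{O}_f) \longrightarrow \operatorname{Pic}(\mathcal{O}_K) \longrightarrow 1,
$$
and then computing $|(\mathcal{O}_K/f\mathcal{O}_K)^\times|$ factor-by-factor using the splitting behavior of each $p \mid f$ in $\mathcal{O}_K$, which introduces the Euler product $\prod_{p\mid f}(1 - \legendre{d}{p}p^{-1})$ encoded by the Kronecker symbol.

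Finally, I would finish by substituting $h(d) = |\operatorname{Pic}(\mathcal{O}_K)|$, $h(df^2) = |\operatorname{Pic}(\mathcal{O}_f)|$, and $[\mathcal{O}_K^\times : \mathcal{O}_f^\times] = w_d / w_{df^2}$ into the above. Since all of these ingredients are standard, there is no genuine obstacle; the only points worth flagging are the identification of $\mathcal{O}_{df^2}$ with $\mathcal{O}_f$ and the translation of unit orders into the numbers of automorphisms $w_D$ appearing in the statement.
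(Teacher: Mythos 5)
Your proposal is correct and follows essentially the same route as the paper: the paper's proof is exactly a citation of Cox, Theorems 7.7 and 7.24 (the form-class/Picard-group dictionary plus the class number formula for the order of conductor $f$), which is precisely the reduction you carry out, with the unit-index translation $[\mathcal{O}_K^\times:\mathcal{O}_f^\times]=w_d/w_{df^2}$ handled the same way.
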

\begin{proof}
This is a direct consequence of \cite[Theorems 7.7 and 7.24]{Cox}. See also \cite[Corollary 7.28]{Cox}.
\end{proof}

Now, we introduce Hurwitz class numbers. For $N\in\N$, the {\em Hurwitz class number} $H(N)$ is a modification of the primitive class number, which counts the number of $SL_2(\Z)$-equivalence classes of binary quadratic forms $ax^2+bxy+cy^2$ with $b^2-4ac=-N$, weighted by $2/g$ for $g$ the order of their automorphism group.
Therefore, $H(N)$ may explicitly be defined by 
\begin{equation}\label{def-Hurwitz-class-num}
	H(N)=\sum\limits_{f^{2}|N} \frac{h(-N/f^2)}{w_{-N/f^{2}}/2}.
\end{equation}
\begin{lemma}\label{lem-Hurwitz-primitive-classnum-relation}
	Let $N\in\N$ and let $d$ be the discriminant of $\Q(\sqrt{-N})$.
	Let $F\in\frac{1}{2}\N$ be such that $-N=dF^2$.
	Then we have 
	$$
	H(N) = \frac{h(d)}{{w_{d}}/2}\prod\limits_{p}\left( \frac{p^{\ord_{p}(F)+1}-1-\legendre{d}{p}(p^{\ord_{p}(F)} - 1)} {p-1}\right),
	$$
	where the product runs over all prime numbers $p$.
\end{lemma}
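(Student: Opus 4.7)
The plan is to substitute Lemma \ref{lem-primclassnum-relation} directly into the definition \eqref{def-Hurwitz-class-num} of $H(N)$ and simplify. First I would identify which terms in $H(N)=\sum_{f^2\mid N} h(-N/f^2)/(w_{-N/f^2}/2)$ actually contribute. A term is nonzero only when $-N/f^2$ is a valid binary quadratic form discriminant, i.e.\ $-N/f^2 \equiv 0,1 \pmod 4$. Writing $-N/f^2 = d(F/f)^2$, I would check by cases on whether $d$ is odd (so $F\in\N$) or $d=4d_0$ with $d_0\equiv 2,3\pmod 4$ that this integrality/congruence condition forces $g:=F/f$ to be a positive integer, and hence $g\mid F$. (In the degenerate case $F\notin\N$, which forces $N\equiv 1,2\pmod 4$, one verifies separately that $H(N)=0$ and that the $p=2$ factor of the proposed product vanishes because $(d/2)=0$, so both sides are $0$.)

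Assuming then that $F\in\N$, I rewrite
\[
H(N)=\sum_{g\mid F}\frac{h(dg^2)}{w_{dg^2}/2}
\]
and apply Lemma \ref{lem-primclassnum-relation} to each summand. The pleasant feature is that the $w_{dg^2}$ appearing in Lemma \ref{lem-primclassnum-relation} exactly cancels the one in the denominator, leaving
\[
\frac{h(dg^2)}{w_{dg^2}/2} \;=\; \frac{h(d)}{w_d/2}\cdot g\prod_{p\mid g}\left(1-\legendre{d}{p}\frac{1}{p}\right).
\]
Summing over $g\mid F$, this yields
\[
H(N)=\frac{h(d)}{w_d/2}\sum_{g\mid F}g\prod_{p\mid g}\left(1-\legendre{d}{p}\frac{1}{p}\right).
\]
The summand is multiplicative in $g$, so the sum factors as a product over primes $p$ of local contributions depending only on $e_p:=\ord_p(F)$.

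Finally I would compute each local factor as a finite geometric sum. For each prime $p$,
\[
\sum_{k=0}^{e_p} p^{k}\prod_{q\mid p^k}\left(1-\legendre{d}{q}\frac{1}{q}\right)
= 1+\left(1-\legendre{d}{p}\frac{1}{p}\right)\cdot\frac{p^{e_p+1}-p}{p-1},
\]
and a short algebraic manipulation collapses the right-hand side to
\[
\frac{p^{e_p+1}-1-\legendre{d}{p}(p^{e_p}-1)}{p-1},
\]
matching the stated formula exactly. Multiplying these local factors over all primes (noting that the factor is $1$ whenever $p\nmid F$, so the product is really finite) completes the proof. The only step that requires genuine care is the reduction of the summation index to $g\mid F$ with $g\in\N$ — i.e.\ correctly excluding the half-integer $g$ that can arise formally when $d\equiv 0\pmod 4$ — but once that bookkeeping is in place, the rest is pure algebraic manipulation with no substantive obstacle.
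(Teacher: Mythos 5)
Your proposal is correct and follows essentially the same route as the paper: substitute Lemma \ref{lem-primclassnum-relation} into the definition \eqref{def-Hurwitz-class-num}, exploit multiplicativity in $F$, evaluate the resulting geometric sums prime-by-prime, and dispose of the half-integral case $F\notin\N$ (i.e.\ $N\equiv 1,2\pmod 4$) by noting both sides vanish. Your extra bookkeeping justifying the re-indexing of the sum from $f^2\mid N$ to $g\mid F$ is a point the paper passes over silently, but it is the same argument.
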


\begin{proof} 
	We first note that the product is indeed a finite product since $\ord_p(F)=0$ for almost all primes, in which case the value in the product is equal to $1$.
	Note also that $F\in\frac{1}{2}\N\setminus\N$ if and only if $N\equiv 1 \text{ or } 2 \pmod{4}$, and the both sides of the above equation are zero since $d\equiv 0\pmod{4}$ and $\ord_2(F)=-1$.
	Now, we assume that $N\equiv 0 \text{ or }3 \pmod{4}$, and hence $H(N)\neq 0$ and $F\in\N$.
	By \eqref{def-Hurwitz-class-num} and Lemma \ref{lem-primclassnum-relation}, we have
	$$
	H(N)=\sum\limits_{f|F} \frac{h(f^2d)}{w_{f^2d}/2}=\frac{h(d)}{{w_{d}}/2} \sum\limits_{f|F}f \prod_{p|f} \left(1-\legendre{d}{p}\frac{1}{p}\right)=\frac{h(d)}{{w_{d}}/2} \sum\limits_{f|F} \prod_{p|f}p^{\ord_p(f)} \left(1-\legendre{d}{p}\frac{1}{p}\right).
	$$
The right-hand side is multiplicative in $F$ and computing the value for $F$ a prime power yields
	$$
	\frac{h(d)}{{w_{d}}/2}\prod_{p|F}\left[1+p\left(1-\legendre{d}{p}\frac{1}{p}\right)+\cdots+p^{\ord_{p}(F)}\left(1-\legendre{d}{p}\frac{1}{p}\right)\right].
	$$
Evaluating the geometric series $1+p+\cdots+p^{\ord_p(F)}$ and $\legendre{d}{p}\left(1+p+\cdots+p^{\ord_p(F)-1}\right)$ then yields the claim.
\end{proof}

\begin{corollary}\label{cor-Hurwitz-reduce-q}
	With the same notations as the above lemma, let $\mu_q=\ord_q(N)-2\delta_{q=2}\delta_{d\equiv0\pmod{4}}$ for a prime $q$. Then we have
\begin{equation*}
	H(N) = H\left(Nq^{-2\lfloor{\mu_{q}/2}\rfloor}\right)\cdot \frac{q^{\lfloor{\mu_{q}/2}\rfloor+1}-1-\legendre{-Nq^{-2\lfloor{\mu_{q}/2}\rfloor}}{q}\left(q^{\lfloor{\mu_{q}/2}\rfloor}-1\right)}{q-1}.
\end{equation*}
\end{corollary}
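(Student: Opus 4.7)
The plan is to apply Lemma \ref{lem-Hurwitz-primitive-classnum-relation} to both $H(N)$ and $H(N')$ where $N':=Nq^{-2\lfloor\mu_q/2\rfloor}$, and take the ratio of the two resulting expressions.

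Write $-N=dF^2$ with $d$ the fundamental discriminant of $\Q(\sqrt{-N})$, so that $\ord_q(N)=\ord_q(d)+2\ord_q(F)$. My first step is to verify that $\lfloor\mu_q/2\rfloor = \ord_q(F)$ by a short case analysis. For $q$ odd, $\ord_q(d)\in\{0,1\}$ and $\mu_q=\ord_q(N)$, from which the identity is immediate in both parities. For $q=2$, the subtraction $-2\delta_{d\equiv 0\pmod 4}$ in the definition of $\mu_q$ is designed to cancel the contribution $\ord_2(d)\in\{2,3\}$ to $\ord_2(N)$ when $d\equiv 0\pmod 4$, and contributes nothing when $d\equiv 1\pmod 4$ (where $\ord_2(d)=0$); in either case one concludes $\lfloor\mu_2/2\rfloor=\ord_2(F)$.

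Next, setting $F':=Fq^{-\ord_q(F)}$, one has $-N'=d(F')^2$ with $\gcd(F',q)=1$, so the fundamental discriminant attached to $N'$ is again $d$. Applying Lemma \ref{lem-Hurwitz-primitive-classnum-relation} to both $N$ and $N'$, the prefactor $h(d)/(w_d/2)$ agrees, and every local factor at $p\neq q$ agrees as well since $\ord_p(F)=\ord_p(F')$. Only the local factor at $p=q$ differs: for $N'$ it equals $1$ (since $\ord_q(F')=0$), so
$$
\frac{H(N)}{H(N')}=\frac{q^{\ord_q(F)+1}-1-\legendre{d}{q}\bigl(q^{\ord_q(F)}-1\bigr)}{q-1}.
$$

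The last step is to rewrite $\legendre{d}{q}$ as $\legendre{-N'}{q}$, which is what appears in the statement. Since $-N'=d(F')^2$ and $\gcd(F',q)=1$, multiplicativity of the Kronecker symbol gives $\legendre{-N'}{q}=\legendre{d}{q}\legendre{(F')^2}{q}=\legendre{d}{q}$; in the remaining case $q=2$ with $2\mid d$, both symbols vanish under the Kronecker convention, so the identity persists. Substituting $\ord_q(F)=\lfloor\mu_q/2\rfloor$ yields the stated formula. The only real obstacle is the bookkeeping at $q=2$, where one must carefully track the three possible values of $\ord_2(d)$ allowed for a fundamental discriminant, but this is the standard finite case check baked into the very definition of $\mu_q$.
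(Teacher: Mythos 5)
Your proof is correct and follows essentially the same route as the paper: identify $\lfloor\mu_q/2\rfloor=\ord_q(F)$, apply Lemma \ref{lem-Hurwitz-primitive-classnum-relation} to both $N$ and $Nq^{-2\lfloor\mu_q/2\rfloor}$ so that all local factors away from $q$ cancel, and convert $\legendre{d}{q}$ into $\legendre{-Nq^{-2\lfloor\mu_q/2\rfloor}}{q}$. The extra case checks you supply at $q=2$ are exactly the details the paper leaves implicit.
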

\begin{proof}
	Note that $\lfloor \mu_q/2 \rfloor=\ord_q(F)$, and hence 
\[
-Nq^{-2\lfloor{\mu_{q}/2}\rfloor}=d(F\cdot q^{-ord_q(F)})^2.
\]
Applying Lemma \ref{lem-Hurwitz-primitive-classnum-relation} to obtain formulas for $H(N)$ and $H(Nq^{-2\lfloor{\mu_{q}/2}\rfloor})$, we observe that the values in the product are the same for each prime $p$ except $p=q$. Noting that $\legendre{d}{q}=\legendre{-Nq^{-2\lfloor{\mu_{q}/2}\rfloor}}{q}$, the corollary follows.
\end{proof}

\begin{proposition}\label{prop1}
	Let $L$ be a ternary lattice and let $q$ be a prime such that $(q,2d_L)=1$.
	For any $n\in\mathbb{N}$ that is locally represented by $L$, we have
	$$
	\frac{r(nq^2,gen(L))}{r(n,gen(L))}= \frac{H\pfrac{4d_Lnq^2}{f^2}}{H\pfrac{4d_Ln}{f^2}}
	$$
	for any $f\in\N$ such that $(f,q)=1$ and $H\pfrac{4d_Ln}{f^2} \neq 0$.
\end{proposition}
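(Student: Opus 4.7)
The plan is to equate both sides by combining the Minkowski--Siegel formula on the left with Corollary \ref{cor-Hurwitz-reduce-q} on the right, then verify equality via the closed-form local density at $q$ from Lemma \ref{localdensities-stable1} in a short case split on the parity of $\ord_q(n)$.

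First, by Theorem \ref{Minkowski-Siegel} with $\ell = 3$,
\[
r(n,\gen(L)) = \varepsilon_3 \pi^{3/2} \Gamma(3/2)^{-1} d_L^{-1/2} n^{1/2} \prod_p \alpha_p(n,L).
\]
For any prime $p \neq q$ one has $\ord_p(nq^2) = \ord_p(n)$ and the $p$-free part of $nq^2$ agrees with that of $n$ modulo $p$, so $\alpha_p(nq^2,L) = \alpha_p(n,L)$ for all such $p$. In the ratio all local factors except that at $q$ cancel, leaving
\[
\frac{r(nq^2,\gen(L))}{r(n,\gen(L))} = q \cdot \frac{\alpha_q(nq^2,L)}{\alpha_q(n,L)}.
\]
Because $(q, 2d_L) = 1$, $L$ is stable at $q$ with $\ord_q(d_L) = 0$, so Lemma \ref{localdensities-stable1} provides closed-form expressions for both local densities.

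On the Hurwitz side, I apply Corollary \ref{cor-Hurwitz-reduce-q} with the prime $q$ to each of $H(4d_Ln/f^2)$ and $H(4d_Lnq^2/f^2)$. The coprimality $(f,q) = (q,2d_L) = 1$ ensures $\ord_q(4 d_L n/f^2) = \ord_q(n) =: \nu$, so that $\lfloor \mu_q/2 \rfloor$ is $\lfloor \nu/2 \rfloor$ and $\lfloor \nu/2 \rfloor + 1$, respectively. The reduced Hurwitz value is the same for both inputs and therefore cancels, leaving
\[
\frac{H\pfrac{4d_Lnq^2}{f^2}}{H\pfrac{4d_Ln}{f^2}} = \frac{q^{\lfloor\nu/2\rfloor+2}-1-\chi_{\mathrm{red}}(q^{\lfloor\nu/2\rfloor+1}-1)}{q^{\lfloor\nu/2\rfloor+1}-1-\chi_{\mathrm{red}}(q^{\lfloor\nu/2\rfloor}-1)},
\]
where $\chi_{\mathrm{red}}$ denotes the Legendre symbol appearing in Corollary \ref{cor-Hurwitz-reduce-q}.

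The final step is a two-case verification. Write $n = n' q^\nu$ with $(n',q) = 1$ and set $\chi := \pfrac{-n' d_L}{q}$. When $\nu = 2k$ is even, one identifies $\chi_{\mathrm{red}} = \chi$ (using $(f,q) = (q,2d_L) = 1$ to absorb squares into the Legendre symbol), and direct substitution into Lemma \ref{localdensities-stable1} yields
\[
q^{k+1}\alpha_q(n,L) = q^{k+1} + q^k - 1 + \chi, \qquad q^{k+2}\alpha_q(nq^2,L) = q^{k+2} + q^{k+1} - 1 + \chi,
\]
whence $q \cdot \alpha_q(nq^2,L)/\alpha_q(n,L)$ simplifies to exactly the displayed Hurwitz ratio. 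When $\nu = 2k+1$ is odd, $\chi_{\mathrm{red}}$ vanishes since its argument has $q$-valuation one, and both sides collapse to $(q^{k+2}-1)/(q^{k+1}-1)$ after computing
\[
q^{k+2}\alpha_q(n,L) = (q+1)(q^{k+1}-1),\qquad q^{k+3}\alpha_q(nq^2,L)=(q+1)(q^{k+2}-1).
\]
The only delicate point is matching the two Legendre symbols arising in Lemma \ref{localdensities-stable1} and Corollary \ref{cor-Hurwitz-reduce-q}, but this is elementary because $-4d_Ln'/f^2$ and $-d_Ln'$ differ by a rational square coprime to $q$; everything else reduces to routine algebraic manipulation.
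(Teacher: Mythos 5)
Your proposal is correct and follows essentially the same route as the paper: reduce via the Minkowski--Siegel formula to the single local ratio $q\cdot\alpha_q(nq^2,L)/\alpha_q(n,L)$, evaluate it with Lemma \ref{localdensities-stable1} (valid since $\ord_q(d_L)=0$), apply Corollary \ref{cor-Hurwitz-reduce-q} to both Hurwitz class numbers so the reduced class number cancels, and verify the resulting rational identities in the two parity cases of $\ord_q(n)$. The only cosmetic quibble is your justification that $\alpha_p(nq^2,L)=\alpha_p(n,L)$ for $p\neq q$ (``agreement of the $p$-free part mod $p$'' is not the right reason at $p=2$ or $p\mid d_L$; the correct one is that $q^2$ is the square of a $p$-adic unit), but this does not affect the validity of the argument.
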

\begin{proof}
	Note that $nq^2$ is locally represented by $L$ since $n$ is, so neither $r(n,\gen(L))$ nor $r(nq^2,\gen(L))$ are equal to zero.
	Now, by the Minkowski--Siegel formula in Theorem \ref{Minkowski-Siegel} and by the local density formula in Lemma \ref{localdensities-stable1} for $\ord_q(d_L)=0$,
	one may verify that
	\begin{equation}\label{prop1:eq1}
		\frac{r(nq^2,gen(L))}{r(n,gen(L))} = q \cdot \frac{\alpha_q(nq^2,L)}{\alpha_q(n,L)}
		=\frac{q^{\lfloor{\nu_{q}/2\rfloor}+2}-1-\legendre{-q^{-2\lfloor{\nu_{q}/2\rfloor}}nd_L}{q}\left(q^{\lfloor{\nu_{q}/2\rfloor+1}}-1\right)}
		{q^{\lfloor{\nu_{q}/2\rfloor}+1}-1- \legendre{-q^{-2\lfloor{\nu_{q}/2\rfloor}}nd_L}{q}\left(q^{\lfloor{\nu_{q}/2\rfloor}}-1\right)},
	\end{equation}
	where $\nu_q=\ord_q(n)$.
\begin{extradetails}
\bf
The first identity in \eqref{prop1:eq1} is simply Theorem \ref{Minkowski-Siegel}. Using Lemma \ref{localdensities-stable1} and setting $n':=q^{-\nu_q}n$, for $\nu_q$ even we compute 
\begin{align*}
q\frac{\alpha_q(nq^2,L)}{\alpha_q(n,L)}&=\frac{q+1 -\frac{1}{q^{\frac{\nu_q+2}{2}}}+\left(\frac{-n'd_L}{q}\right)\frac{1}{q^{\frac{\nu_p+2}{2}}}}{1+\frac{1}{q} -\frac{1}{p^{\frac{\nu_q+2}{2}}}+\left(\frac{-n'd_L}{p}\right)\frac{1}{p^{\frac{\nu_q+2}{2}}}}\\
&=\frac{q^{\frac{\nu_q}{2}+2}+q^{\frac{\nu_q}{2}+1} -1+\left(\frac{-n'd_L}{q}\right)}{q^{\frac{\nu_q}{2}+1}+q^{\frac{\nu_q}{2}} -1+\left(\frac{-n'd_L}{q}\right)}
\end{align*}
The claimed identity is then (the $!$ indicates that it is not yet proven)
\[
\frac{q^{\frac{\nu_q}{2}+2}+q^{\frac{\nu_q}{2}+1} -1+\left(\frac{-n'd_L}{q}\right)}{q^{\frac{\nu_q}{2}+1}+q^{\frac{\nu_q}{2}} -1+\left(\frac{-n'd_L}{q}\right)}\overset{!}{=}\frac{q^{\frac{\nu_{q}}{2}+2}-1-\legendre{-n'd_L}{q}\left(q^{\frac{\nu_{q}}{2}+1}-1\right)}
		{q^{\frac{\nu_{q}}{2}+1}-1- \legendre{-n'd_L}{q}\left(q^{\frac{\nu_{q}}{2}}-1\right)},
\]
which, setting $\varepsilon:= \left(\frac{-n'd_L}{q}\right)\in \{\pm 1\}$, is equivalent to the polynomial equation
\begin{multline*}
\left(q^{\frac{\nu_q}{2}+2}+q^{\frac{\nu_q}{2}+1} -1+\varepsilon\right)\left(q^{\frac{\nu_{q}}{2}+1}-1- \varepsilon\left(q^{\frac{\nu_{q}}{2}}-1\right)\right)\\
\overset{!}{=}\left(q^{\frac{\nu_q}{2}+1}+q^{\frac{\nu_q}{2}} -1+\varepsilon\right)\left(q^{\frac{\nu_{q}}{2}+2}-1-\varepsilon\left(q^{\frac{\nu_{q}}{2}+1}-1\right)\right).
\end{multline*}
Expanding and simplifying, this becomes (noting that $\varepsilon^2=1$)
\begin{multline*}
\cancel{q^{\nu_q+3}} -\bcancel{q^{\frac{\nu_q}{2}+2}} -\xcancel{\varepsilon q^{\nu_q+2}}+\varepsilon q^{\frac{\nu_q}{2}+2}+ q^{\nu_q+2} - q^{\frac{\nu_q}{2}+1} -\varepsilon q^{\nu_q+1}+\varepsilon q^{\frac{\nu_q}{2}+1} - q^{\frac{\nu_{q}}{2}+1}\xout{+1}+ \varepsilon q^{\frac{\nu_{q}}{2}}\xout{-\varepsilon}
+\varepsilon q^{\frac{\nu_{q}}{2}+1}\xout{-\varepsilon}-q^{\frac{\nu_{q}}{2}} \xout{+ 1}\\
\overset{!}{=}\cancel{q^{\nu_q+3}}-q^{\frac{\nu_q}{2}+1}-\xcancel{\varepsilon q^{\nu_q+2}}+\varepsilon q^{\frac{\nu_q}{2}+1}+ q^{\nu_q+2} - q^{\frac{\nu_q}{2}} -\varepsilon q^{\nu_q+1}+\varepsilon q^{\frac{\nu_q}{2}} - \bcancel{q^{\frac{\nu_{q}}{2}+2}}\xout{+1}+ \varepsilon q^{\frac{\nu_{q}}{2}+1}\xout{-\varepsilon} +\varepsilon q^{\frac{\nu_{q}}{2}+2}\xout{-\varepsilon}-q^{\frac{\nu_{q}}{2}+1} \xout{+1}\\
\Leftrightarrow
\cancel{\varepsilon q^{\frac{\nu_q}{2}+2}}+ \bcancel{q^{\nu_q+2}} -\xcancel{ q^{\frac{\nu_q}{2}+1}} -\overbrace{\varepsilon q^{\nu_q+1}}^{(1)}+\overbrace{\varepsilon q^{\frac{\nu_q}{2}+1}}^{(2)} - \overbrace{q^{\frac{\nu_{q}}{2}+1}}^{(3)}+ \overbrace{\varepsilon q^{\frac{\nu_{q}}{2}}}^{(4)}+\overbrace{\varepsilon q^{\frac{\nu_{q}}{2}+1}}^{(5)}-\overbrace{q^{\frac{\nu_{q}}{2}}}^{(6)} \\
\overset{!}{=}-\xcancel{q^{\frac{\nu_q}{2}+1}}+\overbrace{\varepsilon q^{\frac{\nu_q}{2}+1}}^{(2)}+\bcancel{ q^{\nu_q+2}} - \overbrace{q^{\frac{\nu_q}{2}}}^{(6)} -\overbrace{\varepsilon q^{\nu_q+1}}^{(1)}+\overbrace{\varepsilon q^{\frac{\nu_q}{2}}}^{(4)} +\overbrace{ \varepsilon q^{\frac{\nu_{q}}{2}+1}}^{(5)} +\cancel{\varepsilon q^{\frac{\nu_{q}}{2}+2}}-\overbrace{q^{\frac{\nu_{q}}{2}+1}}^{(3)}.
\end{multline*}
This verifies the identity, and hence the claim in this case.

For $\nu_q$ odd we compute 
\begin{align*}
q\frac{\alpha_q(nq^2,L)}{\alpha_q(n,L)}&=\frac{q+1 -\frac{1}{q^{\frac{\nu_q+1}{2}}}-\frac{1}{q^{\frac{\nu_q+3}{2}}}}{1+\frac{1}{q}-\frac{1}{q^{\frac{\nu_q+1}{2}}}-\frac{1}{q^{\frac{\nu_q+3}{2}}}}\\
&=\frac{q^{\frac{\nu_q+5}{2}+2}+q^{\frac{\nu_q+3}{2}} -q-1}{q^{\frac{\nu_q+3}{2}+2}+q^{\frac{\nu_q+1}{2}} -q-1}\\
&=\frac{ (q+1) \left( q^{\frac{\nu_q+3}{2}}-1\right)}{(q+1)\left( q^{\frac{\nu_q+1}{2}}-1\right)}\\
&=\frac{q^{\frac{\nu_q+3}{2}}-1}{q^{\frac{\nu_q+1}{2}}-1} = \frac{q^{\frac{\nu_q-1}{2}+2}-1}{q^{\frac{\nu_q-1}{2}+1}-1}.
\end{align*}
Noting that the Legendre symbol in \eqref{prop1:eq1} vanishes in this case, this is precisely the claimed identity.
\rm
\end{extradetails}
	On the other hand, note that $\frac{4d_Ln}{f^2}\in\N$ and $\mu_q:=\ord_q\left(\frac{4d_Ln}{f^2}\right)=\ord_q(n)=\nu_q$ since $(q,2d_L)=(f,q)=1$.
	By applying Corollary \ref{cor-Hurwitz-reduce-q} for $N=\frac{4d_Lnq^2}{f^2}$ and $\frac{4d_Ln}{f^2}$ respectively, and by dividing the obtained equalities side-by-side,
	\begin{equation}\label{prop1:eq2}
	\frac{H\left(\frac{4d_Lnq^2}{f^2}\right)}{H\left(\frac{4d_Ln}{f^2}\right)}=
	\frac{q^{\lfloor{\nu_{q}/2\rfloor}+2}-1-\legendre{-4d_Lnq^{-2\lfloor{\nu_{q}/2\rfloor}}}{q}\left(q^{\lfloor{\nu_{q}/2\rfloor+1}}-1\right)}
	{q^{\lfloor{\nu_{q}/2\rfloor}+1}-1- \legendre{-4d_Lnq^{-2\lfloor{\nu_{q}/2\rfloor}}}{q}\left(q^{\lfloor{\nu_{q}/2\rfloor}}-1\right)}.
	\end{equation}
	Since the right hand sides of \eqref{prop1:eq1} and \eqref{prop1:eq2} are equal, the proposition follows.
\end{proof}

\begin{proposition}\label{prop2}
	Let $L$ be a ternary lattice and let $p$ be an odd prime with $ord_{p}(d_L) = 1$. 
	For any $n\in\mathbb{N}$ that is locally represented by $L$, we have
	$$
	\frac{r(np^2,gen(L))}{r(n,gen(L))}=	\frac{H \pfrac{4d_Lnp^{2}}{f^2}+S_p^\ast(L)pH\pfrac{4d_Ln}{f^2}}{H\pfrac{4d_Ln}{f^2}+S_p^\ast(L)pH\pfrac{4d_Ln}{f^2p^{2}}}
	$$
	for any $f\in\N$ such that $(f,p)=1$ and $H\pfrac{4d_Ln}{f^2} \neq 0$.
\end{proposition}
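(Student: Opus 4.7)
The plan is to follow the strategy of Proposition \ref{prop1} closely, but now using the $\ord_p(d_L)=1$ case of Lemma \ref{localdensities-stable1} on the left-hand side and reducing three Hurwitz class numbers (rather than two) on the right. By the Minkowski--Siegel formula (Theorem \ref{Minkowski-Siegel}), every local density $\alpha_q$ for $q\neq p$ takes the same value at $n$ and $np^2$, so
\[
\frac{r(np^2,\gen(L))}{r(n,\gen(L))}=p\cdot\frac{\alpha_p(np^2,L)}{\alpha_p(n,L)}.
\]
I would first identify $\left(\frac{-d_{M_p}}{p}\right)=S_p^\ast(L)$: from Lemma \ref{JordanDecomp-stable}, in the isotropic case $M_p=\langle 1,-1\rangle$ gives Legendre value $+1$, while in the anisotropic case $M_p=\langle 1,-\Delta_p\rangle$ gives $-1$. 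Substituting into Lemma \ref{localdensities-stable1} yields closed-form expressions for $\alpha_p(n,L)$ and $\alpha_p(np^2,L)$ in terms of $S_p^\ast(L)$, $\nu_p:=\ord_p(n)$, and (when $\nu_p$ is odd) the Legendre symbol $\varepsilon:=\left(\frac{-p^{-(\nu_p+1)}nd_L}{p}\right)$; note that the parity of $\nu_p$ is preserved under $n\mapsto np^2$.

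For the right-hand side, I would apply Corollary \ref{cor-Hurwitz-reduce-q} with $q=p$ to each of $N_1:=4d_Lnp^2/f^2$, $N_2:=4d_Ln/f^2$, and $N_3:=4d_Ln/(f^2p^2)$. Since $(p,2f)=1$ and $\ord_p(d_L)=1$, the relevant orders are $\mu_p(N_j)=\nu_p+3,\nu_p+1,\nu_p-1$ respectively. The key observation is that when $\nu_p$ is odd, the Legendre symbol produced by Corollary \ref{cor-Hurwitz-reduce-q} after reducing each $N_j$ equals the same $\varepsilon$ above (the residues all match modulo squares), whereas when $\nu_p$ is even, the reduced discriminant remains divisible by $p$ exactly once, so the Legendre symbol vanishes. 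Under each reduction, $H(N_j)$ is expressed as a polynomial in $p$ times a common base value $H(N_0)$ with $\ord_p(N_0)\le 1$, and this base value cancels from numerator and denominator of the claimed right-hand side.

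The main obstacle, as in Proposition \ref{prop1}, is verifying the resulting algebraic identity between the two expressions, which after clearing denominators reduces to a polynomial identity in $p$ with coefficients depending linearly on $S_p^\ast(L)$ and (in the odd case) $\varepsilon$. In the even-$\nu_p$ case the identity is relatively clean because all Legendre symbols coming from the Hurwitz recursion vanish and one sees the $S_p^\ast(L)p$ multiplicities on the right match the $S_p^\ast(L)$-coefficients coming from the local density formula. In the odd-$\nu_p$ case the symbol $\varepsilon$ appears on both sides and must be checked to cancel consistently; this is somewhat tedious but mechanical. Finally, one should verify the boundary cases $\nu_p\in\{0,1\}$, where $H(N_3)$ is interpreted as zero whenever its argument is not a valid discriminant, to ensure the identity still holds there.
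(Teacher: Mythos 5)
Your proposal follows essentially the same route as the paper's proof: reduce via the Minkowski--Siegel formula to the ratio $p\,\alpha_p(np^2,L)/\alpha_p(n,L)$, evaluate it with the $\ord_p(d_L)=1$ case of Lemma \ref{localdensities-stable1} (using $\legendre{-d_{M_p}}{p}=S_p^\ast(L)$), and reduce the Hurwitz class numbers to a common base value via Corollary \ref{cor-Hurwitz-reduce-q} with $\mu_p=\nu_p+1$, checking the resulting identity case-by-case in the parity of $\nu_p$ and the sign $\varepsilon$. The only cosmetic difference is that the paper first disposes of the anisotropic case $M_p\cong\langle 1,-\Delta_p\rangle$ separately (both sides equal $1$) rather than carrying $S_p^\ast(L)$ as a parameter throughout, so your plan is correct and matches the paper's argument.
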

\begin{proof}
	By the Minkowski--Siegel formula in Theorem \ref{Minkowski-Siegel}, \eqref{HasseSymbol-IsoAniso}, and Lemma \ref{JordanDecomp-stable}, it suffices to show that 
	\begin{equation}\label{prop2:eq1}
		p\cdot \frac{\alpha_{p}(np^2,L)}{\alpha_{p}(n,L)}=
		\begin{cases}
			\frac{H\pfrac{4d_Lnp^{2}}{f^2}-pH\pfrac{4d_Ln}{f^2}}{H\pfrac{4d_Ln}{f^2}-pH\pfrac{4d_Ln}{f^2p^{2}}} & \text{if } M_{p}\cong \langle 1,-\Delta_{p} \rangle,\\[15pt]
			\frac{H\pfrac{4d_Lnp^{2}}{f^2}+pH\pfrac{4d_Ln}{f^2}}{H\pfrac{4d_Ln}{f^2}+pH\pfrac{4d_Ln}{f^2p^{2}}} & \text{if } M_{p}\cong \langle 1,-1\rangle, 
		\end{cases}
	\end{equation}
	where $M_p$ is a unimodular Jordan component of $L_p$. To prove \eqref{prop2:eq1}, we follow the same argument as in the proof of Proposition \ref{prop1}.
	Namely, we compute the left-hand side of \eqref{prop2:eq1} by using the formula in Lemma \ref{localdensities-stable1}, and the right-hand side of \eqref{prop2:eq1} is computed as in the proof of Proposition \ref{prop1} by applying Corollary \ref{cor-Hurwitz-reduce-q}, verifying that they are equal.

\begin{extradetails}
When $M_p\cong \langle 1,-\Delta_{p} \rangle$, then $\left(\frac{-d_{M_p}}{p}\right)=\left(\frac{\Delta_p}{p}\right)=-1$, so Lemma \ref{localdensities-stable1} implies that (writing $n'=p^{-\nu_p} n$ and $d_L'=p^{-1}d_L$)
\[
\alpha_p(n,L)=\begin{cases} p^{-\frac{\nu_p}{2}} + p^{-\frac{\nu_p}{2}-1}=p^{-\frac{\nu_p}{2}-1}(p-1)&\text{if $\nu_p$ is even,}\\
p^{-\frac{\nu_p+1}{2}} -\left(\frac{-n'd_L'}{p}\right) p^{-\frac{\nu_p+1}{2}}=p^{-\frac{\nu_p+1}{2}}\left(1-\left(\frac{-n'd_L'}{p}\right)\right) &\text{if $\nu_p$ is odd.}
\end{cases}
\]
Thus 
\[
\frac{\alpha_{p} (np^2,L)}{\alpha_{p} (n,L)}=p
\]
So the left-hand side of \eqref{prop2:eq1} equals $1$. 

For the right-hand side, we plug in Lemma \ref{lem-Hurwitz-primitive-classnum-relation} and note that every factor immediately cancels except possibly for the factor for the prime $p$. This gives 
\begin{multline*}
\frac{H\pfrac{4d_Lnp^{2}}{f^2}-pH\pfrac{4d_Ln}{f^2}}{H\pfrac{4d_Ln}{f^2}-pH\pfrac{4d_Ln}{f^2p^{2}}}\\
=\frac{p^{\left\lfloor \frac{\nu_p}{2}\right\rfloor+2}-1-\left(\frac{n'd_L'}{p}\right)\left(p^{\left\lfloor \frac{\nu_p}{2}\right\rfloor+1}-1\right) - p^{\left\lfloor \frac{\nu_p}{2}\right\rfloor+2}+p+p\left(\frac{n'd_L'}{p}\right)\left(p^{\left\lfloor \frac{\nu_p}{2}\right\rfloor}-1\right)}{p^{\left\lfloor \frac{\nu_p}{2}\right\rfloor+1}-1-\left(\frac{n'd_L'}{p}\right)\left(p^{\left\lfloor \frac{\nu_p}{2}\right\rfloor}-1\right) - p^{\left\lfloor \frac{\nu_p}{2}\right\rfloor+1}+p+p\left(\frac{n'd_L'}{p}\right)\left(p^{\left\lfloor \frac{\nu_p}{2}\right\rfloor-1}-1\right)}\\
=\frac{p-1+(1-p)\left(\frac{n'd_L'}{p}\right)}{p-1+(1-p)\left(\frac{n'd_L'}{p}\right)}=1.
\end{multline*}
So the claim follows for $M_p\cong \langle 1,-\Delta_{p} \rangle$.
\end{extradetails}
	When $M_p\cong \langle 1,-\Delta_{p} \rangle$, one may easily show that both sides of \eqref{prop2:eq1} are equal to 1, so we only provide a detailed calculation for the case when $M_p\cong \langle 1,-1\rangle$. In this case,  $\legendre{-d_{M_p}}{p}=1$. Setting $\nu_p:=\ord_p(n)$, Lemma \ref{localdensities-stable1} implies that
	$$
	\alpha_{p}(n,L)=
	\begin{cases}
		2-\frac{1}{p^{\lceil{\nu_{p}/2}\rceil}}-\frac{1}{p^{\lceil{\nu_{p}/2}\rceil+1}}& \text{if}\  \nu_{p} \equiv 0 \pmod 2,\\
		
		2 - \frac{1}{p^{\lceil{\nu_{p}/2}\rceil}} + \varepsilon_p(n)\frac{1}{p^{\lceil{\nu_{p}/2}\rceil}} & \text{if}\ \nu_{p} \equiv 1 \pmod 2,
	\end{cases}
	$$
	where $\varepsilon_p(n):=\legendre{-nd_Lp^{-{(\ord_p(n)+1)}}}{p}$. Thus, we have
	\begin{equation}\label{prop2:eq2}
		p\cdot \frac{\alpha_{p}(np^2,L)}{\alpha_{p}(n,L)}=
		\begin{cases}
		\frac{2p^{\lceil{\nu_{p}/2}\rceil+2}-p-1}{2p^{\lceil{\nu_{p}/2}\rceil+1}-p-1}
		& \text{if }  \nu_{p} \equiv 0 \pmod 2,\\[10pt]
		
		\frac{2p^{\lceil{\nu_{p}/2}\rceil+1}-1+\varepsilon_p(np^2)}{2p^{\lceil{\nu_{p}/2}\rceil}-1+\varepsilon_p(n)}
		& \text{if}\ \nu_{p} \equiv 1 \pmod 2.
		\end{cases}
	\end{equation}
\begin{extradetails}
Plugging in the previous displayed formula, we directly have
\begin{align*}
p\cdot \frac{\alpha_{p}(np^2,L)}{\alpha_{p}(n,L)}&=	\begin{cases}
\frac{2p-\frac{1}{p^{\lceil{\nu_{p}/2}\rceil}}-\frac{1}{p^{\lceil{\nu_{p}/2}\rceil+1}}}{2-\frac{1}{p^{\lceil{\nu_{p}/2}\rceil}}-\frac{1}{p^{\lceil{\nu_{p}/2}\rceil+1}}}
& \text{if}\  \nu_{p} \equiv 0 \pmod 2,\\
\frac{2p - \frac{1}{p^{\lceil{\nu_{p}/2}\rceil}} + \varepsilon_p(n)\frac{1}{p^{\lceil{\nu_{p}/2}\rceil}}}{2 - \frac{1}{p^{\lceil{\nu_{p}/2}\rceil}} + \varepsilon_p(n)\frac{1}{p^{\lceil{\nu_{p}/2}\rceil}}} & \text{if}\ \nu_{p} \equiv 1 \pmod 2,
	\end{cases}\\
&=		\begin{cases}
		\frac{2p^{\lceil{\nu_{p}/2}\rceil+2}-p-1}{2p^{\lceil{\nu_{p}/2}\rceil+1}-p-1}
		& \text{if }  \nu_{p} \equiv 0 \pmod 2,\\[10pt]
	\frac{2p^{\lceil{\nu_{p}/2}\rceil+1}-1+\varepsilon_p(np^2)}{2p^{\lceil{\nu_{p}/2}\rceil}-1+\varepsilon_p(n)}
		& \text{if}\ \nu_{p} \equiv 1 \pmod 2.
		\end{cases}
\end{align*}
\end{extradetails}
On the other hand, note that $\frac{4d_Ln}{f^2}\in\N$ and 
\[
\mu_p:=\ord_p\left(\frac{4d_Ln}{f^2}\right)=\ord_p(n)+1=\nu_p+1.
\]
Hence $\lfloor \mu_p/2 \rfloor = \lceil \nu_p/2 \rceil$. 
By applying Corollary \ref{cor-Hurwitz-reduce-q} for $N=\frac{4d_Ln}{f^2}$ and $\frac{4d_Ln}{f^2p^2}$ when $\nu_p\ge1$, and noting that $H\pfrac{4d_Ln}{f^2p^2}=0$ when $\nu_p=0$, we have
\begin{align}
\label{eqn:H+H/p2}
 H\pfrac{4d_Ln}{f^2}&+pH\pfrac{4d_Ln}{f^2p^2}\\
\nonumber
&=H\left(\frac{4d_Ln}{f^2} p^{-2\lceil \nu_{p}/2\rceil}\right)
\times
\begin{cases}
	\frac{2p^{\lceil{\nu_{p}/2\rceil}+1}-p-1}{p-1} & \text{if } \nu_p \equiv 0 \pmod {2},\\[5pt] 
	2p^{\lceil\nu_p/2\rceil} & \text{if } \nu_p \equiv 1 \pmod {2} \text{ and } \varepsilon_p(n)=1,\\[5pt]
	2(p+1)\frac{p^{\lceil\nu_p/2\rceil}-1}{p-1} & \text{if } \nu_p \equiv 1 \pmod {2} \text{ and } \varepsilon_p(n)=-1.\\
\end{cases}
\end{align}
Plugging in \eqref{eqn:H+H/p2} with $n\to np^2$ in the numerator and $n$ in the denominator, we obtain
\begin{equation}\label{prop2:eq3}
\frac{H\pfrac{4d_Lnp^{2}}{f^2}+pH\pfrac{4d_Ln}{f^2}}{H\pfrac{4d_Ln}{f^2}+pH\pfrac{4d_Ln}{f^2p^{2}}}
=\begin{cases}
	\frac{2p^{\lceil{\nu_{p}/2\rceil}+2}-p-1}{2p^{\lceil{\nu_{p}/2\rceil}+1}-p-1} & \text{if } \nu_p \equiv 0 \pmod {2},\\[5pt]
	p & \text{if } \nu_p \equiv 1 \pmod {2} \text{ and } \varepsilon_p(n)=1,\\[5pt]
        \frac{p^{\lceil\nu_{p}/2\rceil+1}-1}{p^{\lceil\nu_{p}/2\rceil}-1} & \text{if } \nu_p \equiv 1 \pmod {2} \text{ and } \varepsilon_p(n)=-1.	
\end{cases}
\end{equation}
Noting that $\varepsilon_p(np^2)=\varepsilon_p(n)=\pm1$, one may verify that the right-hand sides of \eqref{prop2:eq2} and \eqref{prop2:eq3} are equal.
This proves the proposition.
\end{proof}
We have a similar result for the prime $2$. 
\begin{proposition}\label{prop3}
	Let $L$ be a ternary lattice that is stable at $2$. For any $n\in\N$ that is locally represented by $L$, we have
	$$
	\frac{r(4n,gen(L))}{r(n,gen(L))}=
	\begin{cases}
		\frac{H\left(\frac{16d_Ln}{f^{2}}\right)+S_2^\ast(L)2H\left(\frac{4d_Ln}{f^{2}}\right)}{H\left(\frac{4d_Ln}{f^{2}}\right)+S_2^\ast(L)2H\left(\frac{d_Ln}{f^{2}}\right)} & \text{if } L \text{ is odd},\\[15pt]
		\frac{H\pfrac{4d_Ln}{f^2}}{H\pfrac{d_Ln}{f^2}}  & \text{if } L \text{ is even}
	\end{cases}
	$$
	for any $f\in \mathbb{N}$ such that $(f,2) =1$ and $H\pfrac{4d_Ln}{f^2}\neq0$ (resp. $H\pfrac{d_Ln}{f^2}\neq0$) if $L$ is odd (resp. if $L$ is even).
\end{proposition}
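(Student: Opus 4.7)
The plan is to mimic the strategy used for Propositions \ref{prop1} and \ref{prop2}, applying the Minkowski--Siegel formula to reduce the ratio $r(4n,\gen(L))/r(n,\gen(L))$ to a ratio of local densities at the prime $2$, and then matching this against the right-hand side computed via Corollary \ref{cor-Hurwitz-reduce-q}. Since $(f,2)=1$ and changing $n$ to $4n$ does not affect the $p$-adic part of $n$ for any odd prime $p$, the Minkowski--Siegel formula (Theorem \ref{Minkowski-Siegel}) combined with $\ell=3$ gives
\[
\frac{r(4n,\gen(L))}{r(n,\gen(L))}=\frac{(4n)^{1/2}}{n^{1/2}}\cdot \frac{\alpha_2(4n,L)}{\alpha_2(n,L)}=2\cdot \frac{\alpha_2(4n,L)}{\alpha_2(n,L)},
\]
so it suffices to show that $2\alpha_2(4n,L)/\alpha_2(n,L)$ equals the Hurwitz class number ratio stated in the proposition (for either parity of $L$).

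For the left-hand side, I would apply Lemma \ref{localdensities-stable2}, splitting into the three cases (odd anisotropic, odd isotropic, even). Writing $n=\beta\cdot 2^a$ with $\beta$ odd, so that $4n=\beta\cdot 2^{a+2}$, I would compute $\alpha_2(4n,L)/\alpha_2(n,L)$ separately when $a$ is odd and $a$ is even, noting that the residue class of $\beta$ modulo $4$ or $8$ does not change when passing from $n$ to $4n$. Each case collapses to a rational function in $2^{\lfloor a/2\rfloor}$ (with an additional sign governed by whether $\beta$ matches $d_L$ or $3d_L$ modulo $8$ when relevant).

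For the right-hand side, I would apply Corollary \ref{cor-Hurwitz-reduce-q} with $q=2$ to each of the three Hurwitz class numbers appearing in the formula, always with argument $4d_Ln/f^2$, $16d_Ln/f^2$, or $d_Ln/f^2$. The key bookkeeping is the shift term $\mu_q=\ord_q(N)-2\delta_{q=2}\delta_{d\equiv 0\pmod 4}$, which behaves differently in the odd and even $L$ cases because $\ord_2(d_L)$ differs. One would then collect the two quantities $H(16d_Ln/f^2)+S_2^{\ast}(L)\cdot 2 H(4d_Ln/f^2)$ and $H(4d_Ln/f^2)+S_2^{\ast}(L)\cdot 2 H(d_Ln/f^2)$ in the odd case, factoring out a common $H$-value with fully reduced $2$-part, so that the ratio becomes a rational function in $2^{\lfloor a/2\rfloor}$ depending only on $\beta\bmod 8$ and $d_L\bmod 8$ (together with the Kronecker symbol $\left(\frac{d}{2}\right)$ where $d$ is the fundamental discriminant of $\Q(\sqrt{-4d_Ln/f^2})$). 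The even case is slightly simpler because $L_2$ is automatically isotropic (so $S_2^{\ast}(L)=1$), and the $d_Ln/f^2$ argument is already integral thanks to $\ord_2(d_L)=1$.

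The main obstacle is the case analysis and the translation between the congruence conditions on $\beta$ modulo $8$ appearing in Lemma \ref{localdensities-stable2} and the Kronecker symbols $\left(\frac{-Nq^{-2\lfloor \mu_q/2\rfloor}}{q}\right)$ appearing in Corollary \ref{cor-Hurwitz-reduce-q} at $q=2$; in particular, one must correctly identify when the fundamental discriminant $d$ is $\equiv 0\pmod 4$ (forcing the $\delta$-correction in $\mu_q$) versus $d\equiv 1\pmod 4$. Once the congruences are aligned, verifying the identity reduces, in each of the cases ($L$ odd isotropic/anisotropic with $a$ even and each sub-congruence of $\beta$ modulo $8$, $L$ odd with $a$ odd, $L$ even with $a$ even, $L$ even with $a$ odd and each sub-congruence), to an elementary algebraic identity between rational functions in $2^{\lfloor a/2\rfloor}$, which can be checked directly as in the proofs of Propositions \ref{prop1} and \ref{prop2}.
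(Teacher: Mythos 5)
Your proposal follows the paper's own argument essentially verbatim: reduce via the Minkowski--Siegel formula to $2\,\alpha_2(4n,L)/\alpha_2(n,L)$, evaluate this with Lemma \ref{localdensities-stable2} in the three cases (odd anisotropic, odd isotropic, even), and match it case-by-case against the Hurwitz class number ratios obtained from Corollary \ref{cor-Hurwitz-reduce-q}, taking care of exactly the subtlety the paper highlights, namely when the fundamental discriminant is $\equiv 0 \pmod{4}$ so that the correction in $\mu_2$ applies. This is precisely the proof given in the paper (which writes out only the even case in detail), so the proposal is correct and takes the same route.
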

\begin{proof}
	This is a consequence of Lemma \ref{localdensities-stable2} and Corollary \ref{cor-Hurwitz-reduce-q} by following the same argument as in the proof of Propositions \ref{prop1} and \ref{prop2}.
	Since the proofs are quite similar to each other, we only provide the proof for the case when $L$ is even.
	
	Assume that $L$ is even. Note that $\ord_2(d_L)=1$ and $n$ is an even integer since it is locally represented by $L$.
	Let us write $n=\beta\cdot 2^a$ with $\beta\equiv 1\pmod{2}$ and  $a\ge1$.
	By the Minkowski--Siegel formula in Theorem \ref{Minkowski-Siegel} and by Lemma \ref{localdensities-stable2}, we have 
	\begin{equation}\label{prop3:eq1}
		\frac{r(4n,gen(L))}{r(n,gen(L))}=2\cdot\frac{\a_2(4n,L)}{\a_2(n,L)}=
		\begin{cases}
			\frac{2^{(a+2)/2}-1}{2^{a/2}-1} & \text{if } a \text{ is even with }a\ge2,\\[5pt]
			\frac{2^{(a+3)/2}-1}{2^{(a+1)/2}-1} & \text{if } a \text{ is odd, } \beta\equiv d_L/2\pmod{4},\\[5pt]
			\frac{3\cdot 2^{(a+1)/2}-1}{3\cdot2^{(a-1)/2}-1} & \text{if } a \text{ is odd, } \beta\equiv 3d_L/2\pmod{8},\\[5pt]
			2 & \text{if } a \text{ is odd, } \beta\equiv 7d_L/2\pmod{8}.\\			
		\end{cases}
	\end{equation}
	On the other hand, we shall apply Corollary \ref{cor-Hurwitz-reduce-q} with $N=\frac{d_Ln}{f^2}\in\N$ and $q=2$. Note that 
	\begin{equation}\label{prop3:cond1}
	\mu_2:=\ord_2\pfrac{d_Ln}{f^2}-2\delta_{q=2}\delta_{d\equiv0\pmod{4}}=
	\begin{cases}
		a+1 & \text{if } \beta \equiv 3d_L/2 \pmod{4},\\
		a-1 & \text{otherwise},
	\end{cases}
	\end{equation}
	where $d$ is the discriminant of $\Q\left(\sqrt{-d_Ln/f^2}\right)$. Noting that $\legendre{d}{2}=\legendre{-N\cdot 2^{-2\lfloor \mu_2/2\rfloor} }{2}$,	Corollary \ref{cor-Hurwitz-reduce-q} implies
	\begin{equation}\label{eqn:HdLn/f2}
	H\pfrac{d_Ln}{f^2}=H\left(\frac{d_Ln}{f^2}\cdot 2^{-2\lfloor{\mu_{2}/2}\rfloor}\right)\cdot \left(2^{\lfloor{\mu_{2}/2}\rfloor+1}-1-\legendre{d}{2}\left(2^{\lfloor{\mu_{q}/2}\rfloor}-1\right)\right).
	\end{equation}
Plugging in \eqref{eqn:HdLn/f2}  (with $n\mapsto 4n$ in the numerator below and $n$ in the denominator) yields
	\begin{equation}\label{prop3:eq2}
	\frac{H\pfrac{4d_Ln}{f^2}}{H\pfrac{d_Ln}{f^2}}=
	\frac{2^{\lfloor{\mu_{2}/2}\rfloor+2}-1-\legendre{d}{2}\left(2^{\lfloor{\mu_{q}/2}\rfloor+1}-1\right)}
	     {2^{\lfloor{\mu_{2}/2}\rfloor+1}-1-\legendre{d}{2}\left(2^{\lfloor{\mu_{q}/2}\rfloor}-1\right)}.
	\end{equation}
	By using \eqref{prop3:cond1} and by noting that 
	\begin{equation*}
		\begin{cases}
			d\equiv 5 \pmod{8} & \text{if $a$ is odd and }\beta\equiv 3d_L/2\pmod{8},\\ 
			d\equiv 1 \pmod{8} & \text{if $a$ is odd and }\beta\equiv 7d_L/2\pmod{8},\\ 
			d\equiv 0 \pmod{4} &\text{otherwise},
		\end{cases}
	\end{equation*}
	one may verify through a case-by-case computation that the right-hand sides of \eqref{prop3:eq1} and \eqref{prop3:eq2} coincide.
	This proves the proposition.
	\begin{extradetails}
		Assume that $L$ is odd. Since $L$ is stable at $2$, we know that $\ord_2(d_L)=0$. 
		Let $d<0$ be the fundamental discriminant for $Q(\sqrt{-4d_Ln})$ and let $F\in \frac{1}{2}\mathbb{N}$ with $ord_{2}(F)\geq -1$ be defined by $\frac{-4d_Ln}{f^{2}} = d F^{2}$. We furthermore write $F_{2} := 2^{-\ord_2(F)}F$ and $n = \beta \times 2^{a}$.
		
		\begin{enumerate}
			\item When $L_2$ is odd and anisotropic, we have $S_2^*(L)=-1$, and Corollary \ref{cor-Hurwitz-reduce-q}  implies that 
			\[
			\frac{H\left(\frac{16d_Ln}{f^{2}}\right)+ S_2^*(L)2H\left(\frac{4d_Ln}{f^{2}}\right)}{H\left(\frac{4d_Ln}{f^{2}}\right)+S_{2}^*(L)2H\left(\frac{d_Ln}{f^{2}}\right)}=\frac{H\left(\frac{16d_Ln}{f^{2}}\right)-2H\left(\frac{4d_Ln}{f^{2}}\right)}{H\left(\frac{4d_Ln}{f^{2}}\right)-2H\left(\frac{d_Ln}{f^{2}}\right)} = \frac{H(|d|\times F_{2}^{2})\left(1-\legendre{d}{2}\right)}{H(|d|\times F_{2}^{2})\left(1-\legendre{d}{2}\right)} = 1.
			\]
By Lemma \ref{localdensities-stable2}, we have
\begin{align*}
			\a_2(4n,L)&=\begin{cases}
				3\cdot 2^{-\frac{a+3}{2}} & \text{if } a \text{ is odd},\\
				3\cdot 2^{-\frac{a+4}{2}} & \text{if } a \text{ is even and } \beta\equiv d_L \pmod{4},\\
				2^{-\frac{a+2}{2}} & \text{if } a \text{ is even and } \beta\equiv 3d_L \pmod{8},\\
				0 & \text{if } a \text{ is even and } \beta\equiv 7d_L \pmod{8}.\\
			\end{cases}\\
			\a_2(n,L)&=\begin{cases}
				3\cdot 2^{-\frac{a+1}{2}} & \text{if } a \text{ is odd},\\
				3\cdot 2^{-\frac{a+2}{2}} & \text{if } a \text{ is even and } \beta\equiv d_L \pmod{4},\\
				2^{-\frac{a}{2}} & \text{if } a \text{ is even and } \beta\equiv 3d_L \pmod{8},\\
				0 & \text{if } a \text{ is even and } \beta\equiv 7d_L \pmod{8}.\\
			\end{cases}
			\end{align*}
			Hence 
			\[
			2\cdot \frac{\alpha_2(4n,L)}{\alpha_2(n,L)}=1=			\frac{H\left(\frac{16dLn}{f^{2}}\right)-2H\left(\frac{4dLn}{f^{2}}\right)}{H\left(\frac{4dLn}{f^{2}}\right)-2H\left(\frac{dLn}{f^{2}}\right)}
			\].
			
			\item When $L_2$ is odd and isotropic, we have $S_2^*(L)=1$, so Corollary \ref{cor-Hurwitz-reduce-q} yields 
			\begin{multline*}
			\frac{H\left(\frac{16d_Ln}{f^{2}}\right)+S_2^*(L)2H\left(\frac{4d_Ln}{f^{2}}\right)}{H\left(\frac{4d_Ln}{f^{2}}\right)+S_2^*(L)2H\left(\frac{d_Ln}{f^{2}}\right)}= \frac{H\left(\frac{16d_Ln}{f^{2}}\right)+2H\left(\frac{4d_Ln}{f^{2}}\right)}{H\left(\frac{4d_Ln}{f^{2}}\right)+2H\left(\frac{d_Ln}{f^{2}}\right)}\\
 = \frac{2\times2^{ord_{2}(F)+2}-3-\legendre{d}{2}(2\times2^{ord_{2}(F)+1}-3)}{2\times2^{ord_{2}(F)+1}-3-\legendre{d}{2}(2\times2^{ord_{2}(F)}-3)}\\
			=\begin{cases}
				\frac{2\cdot2^{\lfloor{a/2\rfloor}+2}-3}{2\cdot2^{\lfloor{a/2\rfloor}+1}-3} & \text{if $a$ is odd or $a$ is even with $\beta \equiv d_L \pmod{4}$, $(\implies d \equiv 0\pmod{4})$,}\\
				\frac{2^{\lfloor{a/2\rfloor}+3}-2}{2^{\rfloor{a/2\rfloor}+2}-2} & \text{if $a$ even with $\beta \equiv 3d_L \pmod{8}$, $\left(\implies \legendre{d}{2} = \legendre{-3}{2} = -1\right)$},\\
				\frac{2^{a/2+3}}{2^{a/2+2}} = 2 & \text{if $a$ even with $\beta \equiv 7d_L \pmod{8}$, $\left(\implies \legendre{d}{2} = \legendre{-7}{2} = 1\right)$}.\\
			\end{cases}
			\end{multline*}
		\end{enumerate}
To compare, by Lemma \ref{localdensities-stable2} we have
\begin{align*}
\a_2(4n,L)&=\begin{cases}
			2-3\cdot 2^{-(\frac{a+3}{2})} & \text{if } a \text{ is odd},\\
			2-3\cdot 2^{-(\frac{a+4}{2})} & \text{if } a \text{ is even and } \beta\equiv d_L \pmod{4},\\
			2-2^{-(\frac{a+2}{2})} & \text{if } a \text{ is even and } \beta\equiv 3d_L \pmod{8},\\
			2 & \text{if } a \text{ is even and } \beta\equiv 7d_L \pmod{8}.\\
		\end{cases}\\
\a_2(n,L)&=\begin{cases}
			2-3\cdot 2^{-(\frac{a+1}{2})} & \text{if } a \text{ is odd},\\
			2-3\cdot 2^{-(\frac{a+2}{2})} & \text{if } a \text{ is even and } \beta\equiv d_L \pmod{4},\\
			2-2^{-(\frac{a}{2})} & \text{if } a \text{ is even and } \beta\equiv 3d_L \pmod{8},\\
			2 & \text{if } a \text{ is even and } \beta\equiv 7d_L \pmod{8}.\\
		\end{cases}
\end{align*}
		
		\begin{enumerate}
			\item When $a$ is odd, we simplify (note that $\left\lfloor\frac{a}{2}\right\rfloor=\frac{a-1}{2}$)
\[
2\times \frac{\a_2(4n,L)}{\a_{2}(n,L)} = 2\times\left(\frac{2-3\cdot 2^{-(\frac{a+3}{2})}}{2-3\cdot 2^{-(\frac{a+1}{2})}}\right)=
			2\times\left(\frac{2^{\frac{a+1}{2}}}{2^{\frac{a+3}{2}}}\right) \left(\frac{2\cdot2^{\frac{a+3}{2}}-3}{2\cdot2^{\frac{a+1}{2}}-3}\right) = \frac{2\cdot2^{\lfloor{a/2\rfloor}+2}-3}{2\cdot2^{\lfloor{a/2\rfloor}+1}-3}.
\]
			
			\item When $a$ is even and $\beta\equiv d_L\pmod{4}$, we may simplify
\[2\times \frac{\a_2(4n,L)}{\a_{2}(n,L)} = 2\times\left(\frac{2-3\cdot 2^{-(\frac{a+4}{2})}}{2-3\cdot 2^{-(\frac{a+2}{2})}}\right)=
			2\times\left(\frac{2^{\frac{a+2}{2}}}{2^{\frac{a+4}{2}}}\right) \left(\frac{2\cdot2^{\frac{a+4}{2}}-3}{2\cdot2^{\frac{a+2}{2}}-3}\right) = \frac{2\cdot2^{\lfloor{a/2\rfloor}+2}-3}{2\cdot2^{\lfloor{a/2\rfloor}+1}-3}.
\]
			
			\item When $a$ is even and $\beta\equiv 3d_L\pmod{8}$, we have
\[
2\times \frac{\a_2(4n,L)}{\a_{2}(n,L)}= 2\times\left(\frac{2^{\frac{a}{2}}}{2^{\frac{a+2}{2}}}\right) \left(\frac{2\cdot2^{\frac{a+2}{2}}-1}{2\cdot2^{\frac{a}{2}}-1}\right)= \left(\frac{2\cdot2^{\frac{a}{2}+1}-1}{2\cdot2^{\frac{a}{2}}-1}\right) = \frac{2^{\lfloor{a/2\rfloor}+3}-2}{2^{\lfloor{a/2\rfloor}+2}-2}.
\]
			
			\item When $a$ is even and $\beta\equiv 7 d_L\pmod{8}$, we have
\[			
2\times \frac{\a_2(4n,L)}{\a_{2}(n,L)}= 2\times\frac{2}{2} = 2.
\]
		\end{enumerate}
These all match the calculation above, verifying the claim in this case.
\end{extradetails}
\end{proof}

In a somewhat orthogonal direction to Jones's \cite[Theorem 86]{Jones}, we next relate $r(n,\gen(L))$ to class numbers whenever $\ord_p(n)\leq 1$ for $p\nmid 2d_L$.
\begin{lemma}\label{ternary-generic-theta}
	Let $L$ be a ternary lattice. For any $n\in\N$ with $\ord_p(n)\le 1$ for all $p\nmid 2d_L$,
	\begin{equation}
		\begin{aligned}
			r(n,\gen(L))&=\frac{12}{\pi}\frac{\sqrt{n}}{\sqrt{d_L}} L(1,\chi_{-4d_Ln}) \prod_{p\mid 2d_L} \a_p(n,L)\left( 1-\frac{1}{p^2}\right)^{-1} \\
			&=\frac{12\sqrt{n}}{\sqrt{d_L}\sqrt{|d|}} H(|d|) \prod_{p\mid 2d_L} \a_p(n,L)\left(1-\legendre{d}{p}\frac{1}{p}\right)\left( 1-\frac{1}{p^2}\right)^{-1},
		\end{aligned}
	\end{equation}
	where $d$ is the discriminant of $\Q\left(\sqrt{-4d_Ln}\right)$, $\chi_a=\legendre{a}{\cdot}$, and $L(s,\chi_a)$ is a Dirichlet $L$-function.
\end{lemma}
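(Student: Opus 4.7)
The plan is to apply the Minkowski--Siegel formula in Theorem \ref{Minkowski-Siegel} and to evaluate the local factors away from $2d_L$ explicitly enough to recognize an Euler product. Taking $\ell=3$, $\varepsilon_3=1$, and $\Gamma(3/2)=\sqrt{\pi}/2$, the formula simplifies to
\[
r(n,\gen(L))=2\pi\sqrt{n/d_L}\,\prod_p \alpha_p(n,L),
\]
and I would split the product as $\prod_{p\nmid 2d_L}\alpha_p(n,L)\cdot\prod_{p\mid 2d_L}\alpha_p(n,L)$ before manipulating the first factor.

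The key computation is that for $p\nmid 2d_L$ (so $p$ is odd and $\operatorname{ord}_p(d_L)=0$) and $\nu_p:=\operatorname{ord}_p(n)\le 1$, Lemma \ref{localdensities-stable1} gives $\alpha_p(n,L)=1+\legendre{-nd_L}{p}p^{-1}$ when $\nu_p=0$ and $\alpha_p(n,L)=1-p^{-2}$ when $\nu_p=1$. Noting that for odd $p\nmid 2d_Ln$ one has $\chi_{-4d_Ln}(p)=\legendre{-nd_L}{p}\in\{\pm1\}$ and for $p\mid n$ one has $\chi_{-4d_Ln}(p)=0$, both cases can be written uniformly as
\[
\alpha_p(n,L)=\bigl(1-\chi_{-4d_Ln}(p)p^{-1}\bigr)^{-1}\bigl(1-p^{-2}\bigr),
\]
using the identity $(1+\varepsilon p^{-1})(1-\varepsilon p^{-1})=1-p^{-2}$ for $\varepsilon\in\{\pm 1\}$. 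Taking the product over $p\nmid 2d_L$ and completing the Euler products for $L(1,\chi_{-4d_Ln})=\prod_p(1-\chi_{-4d_Ln}(p)p^{-1})^{-1}$ and $\zeta(2)=\pi^2/6$, the factors for $p\mid 2d_L$ in the $L$-product contribute trivially since $\chi_{-4d_Ln}(p)=0$ there, yielding
\[
\prod_{p\nmid 2d_L}\alpha_p(n,L)=\frac{6}{\pi^2}L(1,\chi_{-4d_Ln})\prod_{p\mid 2d_L}(1-p^{-2})^{-1},
\]
which combined with the constant $2\pi\sqrt{n/d_L}$ gives the first equality.

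For the second equality, let $d$ be the fundamental discriminant of $\Q(\sqrt{-4d_Ln})$ and write $-4d_Ln=dF^2$ with $F\in\tfrac12\N$. By comparing Euler factors, $L(1,\chi_{-4d_Ln})=L(1,\chi_d)\prod_{p\mid F,\,p\nmid d}(1-\chi_d(p)p^{-1})$, and the Dirichlet class number formula together with $H(|d|)=2h(d)/w_d$ (the $F'=1$ case of Lemma \ref{lem-Hurwitz-primitive-classnum-relation}) gives $L(1,\chi_d)=\pi H(|d|)/\sqrt{|d|}$. It then remains to verify
\[
\prod_{p\mid F,\,p\nmid d}\bigl(1-\chi_d(p)p^{-1}\bigr)=\prod_{p\mid 2d_L}\bigl(1-\chi_d(p)p^{-1}\bigr),
\]
which follows from the hypothesis $\operatorname{ord}_p(n)\le 1$ for $p\nmid 2d_L$: if $p\mid F$ and $p\nmid d$ then $p^2\mid 4d_Ln$, forcing $p\mid 2d_L$; conversely, for $p\mid 2d_L$ either $p\mid d$ (in which case $\chi_d(p)=0$ and the factor is $1$) or $p\nmid d$, in which case the factor can be harmlessly inserted on the left.

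The main obstacle is the bookkeeping in the last paragraph, namely the precise correspondence between the primes dividing $F$ and those dividing $2d_L$ and the careful treatment of the prime $2$ in the fundamental discriminant; the uniform $\nu_p\le 1$ rewriting of $\alpha_p$ is the crucial algebraic trick that makes the Euler product collapse cleanly into $L(1,\chi_{-4d_Ln})/\zeta(2)$.
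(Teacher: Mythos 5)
Your proof is correct and takes essentially the same route as the paper: Minkowski--Siegel with $\ell=3$, the evaluation $\a_p(n,L)=1+\legendre{-nd_L}{p}\frac1p$ (resp.\ $1-p^{-2}$) at primes $p\nmid 2d_L$ (the paper quotes Yang directly, you invoke Lemma \ref{localdensities-stable1} with $\ord_p(d_L)=0$, which is the same formula), the collapse of the outer Euler product into $L(1,\chi_{-4d_Ln})\zeta(2)^{-1}\prod_{p\mid 2d_L}(1-p^{-2})^{-1}$, and finally the Dirichlet class number formula with $H(|d|)=2h(d)/w_d$. One tiny remark on your last step: a prime $p\mid 2d_L$ with $p\nmid d$ necessarily divides $F$ (since $p\mid 4d_Ln=|d|F^2$), so the corresponding factor is already present on the left-hand side rather than being ``harmlessly inserted''; with that observation the two index sets coincide and the identity you need holds exactly as in the paper.
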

\begin{proof}
	By the Minkowski--Siegel formula in Theorem \ref{Minkowski-Siegel}, we have
	\begin{equation}\label{ternary-generic-theta:eq1}
		r(n,\gen(L))=2\pi \sqrt{\frac{n}{d_L}} \prod_p \a_p(n,L)=2\pi \sqrt{\frac{n}{d_L}} \prod_{p\mid 2d_L} \a_p(n,L) \prod_{p\nmid2d_L} \a_p(n,L).		
	\end{equation}
	Note that for any prime $p\nmid 2d_L$, by \cite[Theorem 3.1]{Yang}, we have
	$$
	\a_p(n,L)=\begin{cases}
		1+\legendre{-nd_L}{p}\frac{1}{p} & \text{if } p\nmid n,\\
		1-\frac{1}{p^2} & \text{if } p\mid n.
	\end{cases}
	$$
\begin{extradetails}
We use \eqref{eqn:alphapnLord0} with $\nu_p\leq 1$ (note that the fact that $L$ was stable at $p$ was not used in this calculation). By \eqref{eqn:alphapnLord0}, we have 
\[
\alpha_{p}(n,L)=1+p^{-1} +\begin{cases}
 -p^{\frac{\nu_p}{2}-1}+ \left(\frac{-n'd_L}{p}\right) p^{-\frac{\nu_p}{2}-1}&\text{if }\nu_{p}\text{ is even},\\
-p^{-\frac{\nu_p+1}{2}}-p^{-\frac{\nu_p+3}{2}}&\text{if }\nu_p\text{ is odd}.
\end{cases}
\]
If $\nu_p=0$, then we are in the first case and 
\[
\alpha_{p}(n,L)=1+p^{-1}-p^{-1} + \left(\frac{-n'd_L}{p}\right) p^{-1}=1+\left(\frac{-n'd_L}{p}\right) p^{-1},
\]
as claimed. If $\nu_p=1$, then we are in the second case and 
\[
\alpha_{p}(n,L)=1+p^{-1}-p^{-1} - p^{-2}=1-p^{-2},
\]
as claimed.
\end{extradetails}
	Thus, using the difference of squares identity $(1+\chi(p) p^{-1})(1-\chi(p)p^{-1})=1-p^2$ for $\chi(p)=\pm 1$ and noting that $\legendre{-4d_Ln}{p}=0$ for $p\mid 2d_Ln$, we have
	$$
	\begin{aligned}
		\prod_{p\nmid2d_L} \a_p(n,L) &= \prod_{\substack{p\nmid2d_L \\ p\nmid n}} \left(1+\legendre{-nd_L}{p}\frac{1}{p} \right) \cdot  \prod_{\substack{p\nmid2d_L\\ p\mid n}} \left(1-\frac{1}{p^2} \right)\\	
		&= \prod_{p} \left(1-\legendre{-4nd_L}{p}\frac{1}{p} \right)^{-1} \cdot  \prod_{p\nmid2d_L} \left(1-\frac{1}{p^2} \right)\\	
		&=L(1,\chi_{-4d_Ln}) \prod_{p\nmid2d_L} \left(1-\frac{1}{p^2}\right)  =L(1,\chi_{-4d_Ln}) \cdot \zeta(2)^{-1} \prod_{p\mid2d_L} \left(1-\frac{1}{p^2}\right)^{-1},
	\end{aligned}
	$$
	where $\zeta(2)=\frac{\pi^2}{6}$ is the value of Riemann zeta function at $2$. Plugging the above equation into \eqref{ternary-generic-theta:eq1}, we obtain the first equality of the lemma.
	To obtain the second equality of the lemma, we note our assumption that $\ord_p(n)\leq 1$ for $p\nmid 2d_L$ implies that for $-4d_Ln=dF^2$ every prime dividing $F$ must also divide $2d_L$. Hence we may rewrite the $L$-value and use Dirichlet's class number formula to obtain
	$$
	L(1,\chi_{-4d_Ln})=L(1,\chi_{d}) \cdot \prod_{p\mid 2d_L} \left(1-\legendre{d}{p} \frac{1}{p}\right)=\frac{2\pi  h(d)}{w_d\sqrt{|d|}} \cdot \prod_{p\mid 2d_L} \left(1-\legendre{d}{p}\frac{1}{p}\right).
	$$
\begin{extradetails}
We have 
\[
	L(1,\chi_{-4d_Ln})=\prod_{p} \left(1-\chi_{-4d_Ln}(p)p^{-1}\right)^{-1}=\prod_{p\nmid 2d_Ln} \left(1-\chi_{dF^2}(p)p^{-1}\right)^{-1}=\prod_{p\nmid 2d_L} \left(1-\chi_{d}(p)p^{-1}\right)^{-1},
\]
where in the last step we used the fact that for $p\nmid 2d_L$ we have $p\nmid F$, so $\chi_{dF^2}(p)=\chi_d(p)$. We now rewrite 
\[
\prod_{p\nmid 2d_L} \left(1-\chi_{d}(p)p^{-1}\right)^{-1}=\prod_{p} \left(1-\chi_{d}(p)p^{-1}\right)^{-1}\prod_{p\mid 2d_L} \left(1-\chi_{d}(p)p^{-1}\right)=L\left(1,\chi_d\right)\prod_{p\mid 2d_L} \left(1-\chi_{d}(p)p^{-1}\right).
\]
This is the first identity. Since $d$ is a fundamental discriminant, we may now use the class number formula (cf. (5) on \url{https://mathworld.wolfram.com/ClassNumber.html}) to obtain 
\[
L(1,\chi_d)=\frac{2\pi h(d)}{w_d\sqrt{|d|}},
\]
giving the second identity.
\end{extradetails}
Since $d$ is fundamental, we have $H(|d|)=\frac{h(d)}{w_d/2}$, completing the proof.
\end{proof}

For a ternary lattice $L$, let $\mathcal{P}$ be a set of primes defined by
\begin{equation}\label{defn-mathcalP}
\mathcal{P}=\mathcal{P}_L:=\{p :\text{prime} \mid  L \text{ is stable at } 2 \text{ if } p=2, \text{ and } \ord_p(d_L)=1 \text{ otherwise}\}.	
\end{equation}
Note that for any $p\in\mathcal{P}$ we have $p\mid 2d_L$ . Also, let us define
\begin{equation}\label{defn-mathfrakP}
\mathfrak{P}=\mathfrak{P}_L:=2^{-\delta_{2\in \mathcal{P}}\cdot \ord_2(d_L)}\prod_{p\in\mathcal{P}}p.	
\end{equation}
Note that $\mathfrak{P}$ is a squarefree positive integer, and $\mathfrak{P}=2^{1-2\ord_2(d_L)} d_L$ if $L$ is a stable lattice, and hence our definition of $\mathfrak{P}$ agrees with the definition in the introduction for stable lattices. We are now ready to prove Theorem \ref{mainthm}.

\begin{proof}[Proof of Theorem \ref{mainthm}]
By Duke and Schulze-Pillot's formula \eqref{eqn:DukeSchulzePillot}, it suffices to show that 
\begin{equation}\label{formula-stable}
r(n,\gen(L))= 12\prod_{p\mid \mathfrak{P}} \left(p+S_p^\ast(L)\right)^{-1} \times \sum_{f\mid \mathfrak{P}} \left[\prod_{p\mid f} S_p^\ast(L) \right] \cdot f \cdot H\pfrac{ed_Ln}{f^2},
\end{equation}
where $e=4$ if $L$ is odd, and $e=1$ if $L$ is even.

We first prove the theorem when $L$ is odd, in which case $\mathfrak{P}=2d_L$ and $e=4$.
We prove that \eqref{formula-stable} holds in two steps. We first show that if \eqref{formula-stable} holds for a positive integer $n$, then it also holds for $nq^2$ for any prime $q$, reducing the claim to showing that \eqref{formula-stable} holds for any squarefree positive integer $n$.
To show the claim, let $n\in \N$ and let $q$ be a prime. First of all, recall that $r(n,\gen(L))\neq0$ if and only if $n$ is locally represented by $L$. Hence, since $Q(q\bm{x})=q^2Q(\bm{x})$, if $r(n,\gen(L))\neq 0$, then $r(nq^2,\gen(L))\neq 0$. Conversely, if $n$ is not represented by $L$, then there exists a prime $p$ such that $n$ is not represented by $L_p$ over $\Z_p$. By Corollary \ref{cor-local-rep-num}, $L_p$ should be anisotropic (hence $p\mid 2d_L$), and $n$ is not represented by $L_p$ if and only if $nq^2$ is not represented by $L_p$. Thus 
\begin{center}
	$r(n,\gen(L))\neq0$ if and only if $r(nq^2,\gen(L))\neq0$.
\end{center}

Now we assume that $r(n,\gen(L))\neq 0$. If $q \nmid 2d_L$, then $(q,\mathfrak{P})=1$. Hence by Proposition \ref{prop1}, for any $f\mid \mathfrak{P}$, we have 
$$
\frac{r(nq^2,\gen(L))}{r(n,\gen(L))} = \frac{H\pfrac{4d_Lnq^2}{f^2}}{H\pfrac{4d_Ln}{f^2}}.
$$
Rearranging and plugging in \eqref{formula-stable} for $n$ yields \eqref{formula-stable} for $nq^2$.

On the other hand, if $q\mid 2d_L$, then since $\mathfrak{P}$ is squarefree and $q\mid \mathfrak{P}$, the sum in the right hand side of \eqref{formula-stable} may be written as
\begin{equation}\label{mainthm:eqq}
\sum_{f\mid (\mathfrak{P}/q)} \left[\prod_{p\mid f} S_p^\ast(L) \right] \cdot f \cdot \left( H\pfrac{4d_Ln}{f^2} +S_q^\ast(L)qH\pfrac{4d_Ln}{f^2q^2} \right).
\end{equation}
Thus, noting that $(f,q)=1$ for any $f\mid (\mathfrak{P}/q)$, the claim follows by Propositions \ref{prop2} and \ref{prop3} in a similar manner to the case when $q\nmid 2d_L$.

Now assume that $r(n,\gen(L))=0$. We will show that the right hand side of \eqref{formula-stable} is equal to zero in this case.
Note that there is a prime $q_0$ such that $n$ is not represented by $L_{q_0}$.
By Lemma \ref{JordanDecomp-stable} and Corollary \ref{cor-local-rep-num}, $L_{q_0}$ is anisotropic, $S_{q_0}^\ast(L)=-1$, $n\in (-d_L)\Z_{q_0}^2$, and $q_0\mid\mathfrak{P}$.
By Corollary \ref{cor-Hurwitz-reduce-q} if $H\pfrac{4d_Ln}{f^2}\neq0$ (i.e., $\frac{4d_Ln}{f^2}\in\N)$, then we conclude that $H\pfrac{4d_Ln}{f^2} +S_{q_0}^\ast(L)q_0H\pfrac{4d_Ln}{f^2q_0^2}$ is equal to
$$
H\left(\frac{4d_Ln}{f^2}q_0^{-2a}\right)\cdot 
\left[\frac{q_0^{a+1}-1-\legendre{-4d_Lnq_0^{-2a}}{q_0}\left(q_0^{a}-1\right)}{q_0-1}
-q_0\frac{q_0^{a}-1-\legendre{-4d_Lnq_0^{-2a}}{q_0}\left(q^{a-1}-1\right)}{q_0-1}\right]
$$
where $a=\lfloor \mu_{q_0}/2\rfloor$, $\mu_{q_0}:=\ord_{q_0}(\frac{4d_Ln}{f^2})-2\delta_{q_0=2}\delta_{d\equiv0\pmod{4}}$ with $d$ the discriminant of $Q\left(\sqrt{\frac{4d_Ln}{f^2}}\right)$.Since $n\in (-d_L)\Z_{q_0}^2$, we have $\legendre{-4d_Lnq_0^{-2a}}{q_0}=1$, hence the above is equal to zero.
\begin{extradetails}
\bf
Since $n\in (-d_L)\Z_{q_0}^2$, we see that $\ord_{q_0}(4d_Ln)-2\delta_{q_0=2}\delta_{d\equiv 0\pmod{4}} =2a$ is even. Thus for $q_0$ odd we have  $q_0\nmid \frac{4d_Ln}{f^2}$, and hence 
\[
\legendre{-4d_Lnq_0^{-2a}}{q_0}=1.
\]

 If $q_0=2$, then to show that 
\[
\legendre{-4d_Lnq_0^{-2a}}{q_0}=1
\]
we need to verify that $d\not\equiv 0\pmod{4}$. However, since $n\in (-d_L)\Z_{2}^2$, we see that $-4d_Ln\in \Z_2^2$, so the odd part is congruent to $1$ modulo $4$ and hence the corresponding fundamental discriminant is odd.

Plugging in $\legendre{-4d_Lnq_0^{-2a}}{q_0}=1$, the factor on the inside becomes
\[
\left[\frac{q_0^{a+1}-q_0^{a}}{q_0-1}-\frac{q_0^{a+1}-q^{a}}{q_0-1}\right]=0.
\]
\rm
\end{extradetails}
Thus, from the viewpoint of \eqref{mainthm:eqq} with $q=q_0$, the right hand side of \eqref{formula-stable} is zero as claimed.

We have now reduced the proof of \eqref{formula-stable} to the case that $n$ is a squarefree positive integer which is represented by the genus. Then by Lemma \ref{ternary-generic-theta}, we have
\begin{equation}\label{mainthm:eq1}
r(n,\gen(L))=\frac{12\sqrt{n}}{\sqrt{d_L}\sqrt{|d|}} H(|d|) \prod_{p\mid 2d_L} c_p(n,L),	
\end{equation}
where $d$ is the discriminant of $\Q(\sqrt{-4d_Ln})$ and $c_p(n,L):=\a_p(n,L)\left(1-\legendre{d}{p}\frac{1}{p}\right)\left( 1-\frac{1}{p^2}\right)^{-1}$.

Let $F$ be the positive integer defined by $-4d_Ln=dF^2$ and let $F_p:=p^{\ord_p{F}}$ for each prime $p$.
Note that since we are assuming $n$ to be squarefree and $L$ is stable, $F$ is squarefree. Indeed, for any prime $p\nmid 2d_L$, $F_p=1$ and $p\mid d$ if and only if $p\mid n$, and
\begin{center}
$F_2=1$ and $|d|\equiv 0\pmod{4}$  if $n\not\equiv 3d_L \pmod{4}$, and $F_2=2$ and $|d|\equiv 3\pmod{4}$ otherwise. 
\end{center}
Furthermore, for an odd prime $p\mid d_L$, we have 
\begin{center}
$F_p=1$ and $p\mid d$ if $p\nmid n$, and $F_p=p$ and $p\nmid d$ if $p\mid n$.
\end{center}

By Lemma \ref{localdensities-stable2} and Corollary \ref{cor-Hurwitz-reduce-q}, one may verify that 
$$
H(|d|) c_2(n,L)= \frac{2}{2+S_2^\ast(L)} \times 
\begin{cases}
	H(|d|) & \text{if } n\not \equiv 3d_L \pmod{4},\\
	\frac{1}{2}\left(H(4|d|)+S_2^\ast(L)\cdot 2H(|d|)\right) & \text{if } n \equiv 3d_L \pmod{4}.
\end{cases}
$$
\begin{extradetails}
\bf
If $n\not\equiv 3d_L\pmod{4}$, then $d\equiv 0\pmod{4}$, so $\legendre{d}{2}=0$. For $L$ odd and anisotropic (i.e., $S_{2}^*(L)=-1$), Lemma \ref{localdensities-stable2} gives 
\begin{align*}
c_2(n,L)&=\alpha_2(n,L)\left(1-\legendre{d}{2}\frac{1}{2}\right)\frac{4}{3}=\frac{4}{3}\alpha_2(n,L)\\
&=
\begin{cases}
2^{2-\frac{a+1}{2}}=2^{1}=2=\frac{2}{2-1}&\text{if $2\mid n$ ($a=1$),}\\
2^{2-\frac{a+2}{2}}=2^{1}=2=\frac{2}{2-1}&\text{if $n\equiv d_L\pmod{4}$ ($a=0$)},
\end{cases}
\end{align*}
For $L$ odd and isotropic (i.e., $S_{2}^*(L)=1$), Lemma \ref{localdensities-stable2} gives 
\begin{align*}
c_2(n,L)&=\frac{4}{3}\alpha_2(n,L)\\
&=
\begin{cases}
\frac{4}{3}\left(2-3\cdot 2^{-1}\right)=\frac{2}{3}=\frac{2}{2+1}&\text{if $2\mid n$ ($a=1$),}\\
\frac{4}{3}\left(2-3\cdot 2^{-1}\right)=\frac{2}{3} = \frac{2}{2+1}&\text{if $n\equiv d_L\pmod{4}$ ($a=0$)}.
\end{cases}
\end{align*}
These cases then follow directly.

For $n\equiv 3d_L\pmod{4}$, then $|d|\equiv 3\pmod{4}$ and 
\[
\legendre{d}{2}=\begin{cases} -1&\text{if }n\equiv 3d_L\pmod{8},\\
1&\text{if }n\equiv 7d_L\pmod{8}.
\end{cases}
\]
For $L$ odd and anisotropic (i.e., $S_{2}^*(L)=-1$ and $n\equiv 3d_L\pmod{8}$), Lemma \ref{localdensities-stable2} gives 
\[
c_2(n,L)=\alpha_2(n,L)\left(1-\legendre{d}{2}\frac{1}{2}\right)\frac{4}{3}=\left(\frac{3}{2}\right)\frac{4}{3}=2.
\]
We thus need to show in this case that 
\[
2H(|d|)=H(4|d|)-2H(|d|)\Leftrightarrow H(4|d|)=4H(|d|).
\]
Corollary \ref{cor-Hurwitz-reduce-q} yields (note that $\mu_2=2$ because $F_2=2$) 
\[
H(4|d|)=H(|d|)\frac{2^2-1 - \legendre{d}{2}(2-1)}{2-1}=4H(|d|).
\]
So this case follows as well.

For $L$ odd and isotropic (i.e., $S_{2}^*(L)=1$), Lemma \ref{localdensities-stable2} gives 
\begin{align*}
c_2(n,L)&=\frac{4}{3}\left(1-\legendre{d}{2}\frac{1}{2}\right)\alpha_2(n,L)\\
&=
\begin{cases}
\frac{4}{3}\cdot \frac{3}{2}\cdot (2-1)=2&\text{if $n\equiv 3d_L\pmod{8}$},\\
\frac{4}{3}\cdot\frac{1}{2}(2)=\frac{4}{3}&\text{if $n\equiv 7d_L\pmod{8}$}.
\end{cases}
\end{align*}
Hence we want to show that 
\begin{multline*}
\begin{cases}
2H(|d|)=\frac{1}{3}\left(H(4|d|)+2H(|d|)\right)&\text{if }n\equiv 3d_L\pmod{8},\\
\frac{4}{3}H(|d|)=\frac{1}{3}\left(H(4|d|)+2H(|d|)\right)&\text{if }n\equiv 7d_L\pmod{8},
\end{cases}\\
\Leftrightarrow 
\begin{cases}
4H(|d|)=H(4|d|)&\text{if }n\equiv 3d_L\pmod{8},\\
2H(|d|)=H(4|d|)&\text{if }n\equiv 7d_L\pmod{8}.
\end{cases}
\end{multline*}
Now by Corollary \ref{cor-Hurwitz-reduce-q} (with $\mu_2=2$) 
\begin{equation}\label{eqn:H4nHn}
H(4|d|)=H(|d|)\frac{2^2-1 - \legendre{d}{2}(2-1)}{2-1}=
\begin{cases}
4H(|d|)&\text{if }n\equiv 3d_L\pmod{8},\\
2H(|d|)&\text{if }n\equiv 7d_L\pmod{8}.
\end{cases}
\end{equation}
So this case follows as well.
\rm
\end{extradetails}
Moreover, noting that $H\pfrac{|d|}{4}=0$ if $n\not\equiv 3d_L\pmod{4}$, this may be written as
\begin{equation}\label{mainthm:eq2}
H(|d|) c_2(n,L)= \frac{2}{2+S_2^\ast(L)} \times \frac{1}{F_2}\left(H(|d|F_2^2)+S_2^\ast(L)\cdot 2H\pfrac{|d|F_2^2}{4}\right).
\end{equation}
If we plug \eqref{mainthm:eq2} into \eqref{mainthm:eq1}, then we have
\begin{equation}\label{mainthm:eq3}
	r(n,\gen(L))=\frac{12\sqrt{n}}{\sqrt{d_L}\sqrt{|d|F_2^2}} \cdot  \frac{2}{2+S_2^\ast(L)} \cdot \left( H(|d|F_2^2) +S_2^\ast(L)\cdot 2H\pfrac{|d|F_2^2}{4}\right) \prod_{p\mid d_L} c_p(n,L),
\end{equation}
This results in a disappearance of the term $c_2(n,L)$ in the formula for $r(n,\gen(L))$ and the $2$-powers cancel in
\[
\frac{2\sqrt{n}}{\sqrt{d_L} \sqrt{|d|F_2^2}}=\frac{\sqrt{4d_L n}}{d_L \sqrt{|d|F_2^2}}
\]
because $\ord_2(4d_Ln)=\ord_2(|d|F_2^2)$ and $d_L$ is odd, but we obtain a sum of two Hurwitz class numbers instead.

We next combine $c_p(n,L)$ times a class number in a similar way for an odd prime $p\mid d_L$ to rewrite \eqref{mainthm:eq3} in the desired shape. For any positive integer $f\mid F$ with $(f,p)=1$, one may verify by Lemma \ref{localdensities-stable1} and Corollary \ref{cor-Hurwitz-reduce-q} that
$$
H(|d|f^2) c_p(n,L)= \frac{p}{p+S_p^\ast(L)} \times 
\begin{cases}
	H(|d|f^2) & \text{if } p\nmid n,\\
	\frac{1}{p}\left(H(|d|f^2p^2)+S_p^\ast(L)\cdot pH(|d|f^2)\right) & \text{if } p\mid n,
\end{cases}
$$
\begin{extradetails}
\bf
Since $n$ is squarefree, we have $\nu_p=\ord_p(n)\in\{0,1\}$, and since $p\mid d_L$ we have $\ord_p(d_L)=1$. Note that $p\mid d$ if and only if $\nu_p=0$.

By Lemma \ref{localdensities-stable1}, we have 
\begin{align*}
c_p(n,L)&=\alpha_p(n,L)\left(1-\legendre{d}{p}\frac{1}{p}\right)\left(1-\frac{1}{p^2}\right)^{-1}\\
&=\begin{cases}
\left(1-\frac{1}{p}\right)\left(1-\frac{1}{p^2}\right)^{-1}=\frac{p}{p+1}&\text{if }\nu_p=0\text{ and $L_p$ isotropic},\\
\left(1+\frac{1}{p}\right)\left(1-\frac{1}{p^2}\right)^{-1}=\frac{p}{p-1}&\text{if }\nu_p=0\text{ and $L_p$ anisotropic},\\
\left(1+\left(1-\frac{1}{p}+\legendre{d}{p}\frac{1}{p}\right)\right)\left(1-\legendre{d}{p}\frac{1}{p}\right)\left(1-\frac{1}{p^2}\right)^{-1}&\text{if }\nu_p=1\text{ and $L_p$ isotropic},\\
\left(1-\left(1-\frac{1}{p}+\legendre{d}{p}\frac{1}{p}\right)\right)\left(1-\legendre{d}{p}\frac{1}{p}\right)\left(1-\frac{1}{p^2}\right)^{-1}&\text{if }\nu_p=1\text{ and $L_p$ anisotropic}.
\end{cases}\\
&=\begin{cases}
\frac{p}{p+S_{p}^*(L)}&\text{if }\nu_p=0\text{ and $L_p$ isotropic},\\
\frac{p}{p+S_{p}^*(L)}&\text{if }\nu_p=0\text{ and $L_p$ anisotropic},\\
\left(2-\frac{1}{p}+\legendre{d}{p}\frac{1}{p}\right)\left(1+\legendre{d}{p}\frac{1}{p}\right)^{-1}&\text{if }\nu_p=1\text{ and $L_p$ isotropic},\\
\frac{1}{p} \left(1-\legendre{d}{p}\right)\left(1+\legendre{d}{p}\frac{1}{p}\right)^{-1}&\text{if }\nu_p=1\text{ and $L_p$ anisotropic}.
\end{cases}
\end{align*}
Since $n$ is represented by the genus, in the last case we must have $\legendre{d}{p}=-1$, so we obtain
\[
c_{p}(n,L)=
\begin{cases}
\frac{p}{p+S_{p}^*(L)}&\text{if }\nu_p=0\text{ and $L_p$ isotropic},\\
\frac{p}{p+S_{p}^*(L)}&\text{if }\nu_p=0\text{ and $L_p$ anisotropic},\\
\left(2-\frac{1}{p}+\legendre{d}{p}\frac{1}{p}\right)\left(1+\legendre{d}{p}\frac{1}{p}\right)^{-1}&\text{if }\nu_p=1\text{ and $L_p$ isotropic},\\
\frac{2}{p}\left(1-\frac{1}{p}\right)^{-1}&\text{if }\nu_p=1\text{ and $L_p$ anisotropic}.
\end{cases}
\]
For $p\nmid n$, we are done. For $p\mid n$ we need to verify
\begin{multline*}
\begin{cases}
H\left(|d|f^2\right)\left(2-\frac{1}{p}+\legendre{d}{p}\frac{1}{p}\right)\left(1+\legendre{d}{p}\frac{1}{p}\right)^{-1}=\frac{1}{p+1}\left(H\left(|d|f^2p^2\right) + pH\left(|d|f^2\right)\right)&\text{if $L_p$ isotropic},\\
H\left(|d|f^2\right)\frac{2}{p}\left(1-\frac{1}{p}\right)^{-1}=\frac{1}{p-1}\left(H\left(|d|f^2p^2\right) -pH\left(|d|f^2\right)\right)&\text{if $L_p$ anisotropic}.
\end{cases}\\
\Leftrightarrow\begin{cases}
H\left(|d|f^2\right)\left(2-\frac{1}{p}+\legendre{d}{p}\frac{1}{p}\right)\left(1+\legendre{d}{p}\frac{1}{p}\right)^{-1}=\frac{1}{p+1}\left(H\left(|d|f^2p^2\right) + pH\left(|d|f^2\right)\right)&\text{if $L_p$ isotropic},\\
(p+2)H\left(|d|f^2\right)=H\left(|d|f^2p^2\right)&\text{if $L_p$ anisotropic}.
\end{cases}
\end{multline*}
In the last case, Corollary \ref{cor-Hurwitz-reduce-q} gives (noting that $\legendre{d}{p}=-1$ and $\mu_p=2$ in this case)
\[
H\left(|d|f^2p^2\right)=H\left(|d|f^2\right)\frac{p^2-1 +p-1}{p-1}=H\left(|d|f^2\right)(p+1+1),
\]
verifying the claim. 

It remains to show that for $L_p$ isotropic we have 
\begin{equation}\label{eqn:Hdf2toshow}
H\left(|d|f^2\right)\left(2-\frac{1}{p}+\legendre{d}{p}\frac{1}{p}\right)\left(1+\legendre{d}{p}\frac{1}{p}\right)^{-1}=\frac{1}{p+1}\left(H\left(|d|f^2p^2\right) + pH\left(|d|f^2\right)\right).
\end{equation}
By Corollary \ref{cor-Hurwitz-reduce-q}, we have 
\[
H\left(|d|f^2p^2\right)=H\left(|d|f^2\right)\frac{p^2-1 -\legendre{d}{p}(p-1)}{p-1}=H\left(|d|f^2\right)\left(p+1-\legendre{d}{p}\right),
\]
Plugging this into \eqref{eqn:Hdf2toshow}, we want to verify that 
\begin{multline*}
H\left(|d|f^2\right)\left(2-\frac{1}{p}+\legendre{d}{p}\frac{1}{p}\right)\left(1+\legendre{d}{p}\frac{1}{p}\right)^{-1}=\frac{1}{p+1}\left(\left(p+1-\legendre{d}{p}\right)H\left(|d|f^2\right) + pH\left(|d|f^2\right)\right)\\
\Leftrightarrow \left(2-\frac{1}{p}+\legendre{d}{p}\frac{1}{p}\right)\left(1+\legendre{d}{p}\frac{1}{p}\right)^{-1}=\frac{1}{p+1}\left(p+1-\legendre{d}{p} + p\right)\\
\Leftrightarrow \left(2-\frac{1}{p}+\legendre{d}{p}\frac{1}{p}\right)(p+1)=\left(1+\legendre{d}{p}\frac{1}{p}\right)\left(2p+1-\legendre{d}{p}\right)\\
\Leftrightarrow \left(2p-1+\legendre{d}{p}\right)(p+1)=\left(p+\legendre{d}{p}\right)\left(2p+1-\legendre{d}{p}\right)\\
\Leftrightarrow 
2p^2+\left(\legendre{d}{p}+1\right) p +\legendre{d}{p}-1 = 2p^2+\left(\legendre{d}{p}+1\right) p + \legendre{d}{p}-\legendre{d}{p}^2, 
\end{multline*}
which holds because $\legendre{d}{p}^2=1$.\rm
\end{extradetails}
which may equivalently be written as
\begin{equation}\label{mainthm:eq4}
H(|d|f^2) c_p(n,L)= \frac{p}{p+S_p^\ast(L)} \times \frac{1}{F_p}\left(H(|d|f^2F_p^2)+S_p^\ast(L)\cdot pH\pfrac{|d|f^2F_p^2}{p^2}\right).
\end{equation}
We iteratively use \eqref{mainthm:eq4} to remove each prime factor in \eqref{mainthm:eq3} one-by-one. Note that \eqref{mainthm:eq4} is still valid for any rational number $f$ of the form $\frac{f_1}{f_2}\not\in\Z$ for some positive integers $f_1$ and $f_2$ such that $(f_1,f_2)=(f_1,p)=(f_2,p)=1$, since in that case both sides of \eqref{mainthm:eq4} are equal to zero. 
If there is an odd prime $p$ dividing $d_L$, then we may apply \eqref{mainthm:eq4} for $f=F_2$ and $\frac{F_2}{2}$, respectively, and then plug those into \eqref{mainthm:eq2} in order to remove the factor $c_p(n,L)$ from \eqref{mainthm:eq3}. By induction, iteratively plugging \eqref{mainthm:eq4} into \eqref{mainthm:eq3} yields
$$
r(n,\gen(L))=\frac{12\sqrt{n}}{\sqrt{d_L}\sqrt{|d|\prod_{p\mid 2d_L}F_p^2}} \cdot  \prod_{p\mid 2d_L} \frac{p}{p+S_p^\ast(L)} \times \sum_{f\mid 2d_L} \left[\prod_{p\mid f} S_p^\ast(L) \right] \cdot f \cdot H\pfrac{4d_Ln}{f^2}
$$
Noting that $|d|\prod_{p\mid 2d_L}F_p^2=4d_Ln$ and $\prod_{p\mid 2d_L}p=2d_L=\mathfrak{P}$, we obtain \eqref{formula-stable}.

The proof of the theorem when $L$ is even, in which case $\mathfrak{P}=d_L/2$ and $e=1$, is quite similar to the above, so we just outline the proof. Firstly, we note that if $\ord_2(n)=0$, then both sides of \eqref{formula-stable} are equal to zero; the left-hand side is zero by Theorem \ref{Minkowski-Siegel} and since $\a_2(n,L)=0$, and the right-hand side is zero since $d_Ln\equiv 2\pmod{4}$, hence all terms $H\pfrac{d_Ln}{f^2}$ are zero.
Using a similar argument to the proof for the case when $L$ is odd, it suffice to show that the formula \eqref{formula-stable} holds for any positive integer $n$ such that $\delta_{p,2}\le \ord_p(n) \le 1+\delta_{p,2}$ for any prime $p$.
\begin{extradetails}
\bf
The claim that $r(n,\gen(L))\neq 0$ if and only if $r(nq^2,\gen(L))\neq 0$ is locally equivalent to what was checked for $L$ odd for $q\neq 2$. For $L$ even, Lemma \ref{localdensities-stable2} implies that $r(n,\gen(L))=0$ if and only if $n$ is odd. 

Now assume that $r(n,\gen(L))\neq 0$. If $q\neq 2$, the assumption that $L$ is even is does not affect the local calculation, so \eqref{formula-stable} for $n$ implies \eqref{formula-stable} for $nq^2$. For $q=2$ and $n$ with $2\mid n$, we want to show that \eqref{formula-stable} for $n$ implies \eqref{formula-stable} for $4n$. In this case, Proposition \ref{prop3} implies that 
\[
\frac{r(4n,\gen(L)}{r(n,\gen(L))} = \frac{H\left(\frac{4d_Ln}{f^2}\right)}{H\left(\frac{d_Ln}{f^2}\right)}
\]
for any odd $f$ for which $H\left(\frac{d_Ln}{f^2}\right)\neq 0$. Plugging in \eqref{formula-stable} (recalling that $e=1$) then yields 
\begin{align*}
r(4n,\gen(L))&=r(n,\gen(L))\frac{H\left(\frac{4d_Ln}{f^2}\right)}{H\left(\frac{d_Ln}{f^2}\right)}\\
&=12\prod_{p\mid \mathfrak{P}} \left(p+S_p^\ast(L)\right)^{-1} \times \sum_{f\mid \mathfrak{P}} \left[\prod_{p\mid f} S_p^\ast(L) \right] \cdot f \cdot H\pfrac{d_Ln}{f^2}\frac{H\left(\frac{4d_Ln}{f^2}\right)}{H\left(\frac{d_Ln}{f^2}\right)}\\
&=12\prod_{p\mid \mathfrak{P}} \left(p+S_p^\ast(L)\right)^{-1} \times \sum_{f\mid \mathfrak{P}} \left[\prod_{p\mid f} S_p^\ast(L) \right] \cdot f \cdot H\pfrac{4d_Ln}{f^2}.
\end{align*}
This is \eqref{formula-stable} for $4n$.
\rm
\end{extradetails}
For those $n$, let $d$ be the discriminant of $Q(\sqrt{-d_Ln})$ and let $F\in\N$ be defined uniquely by $-d_Ln=dF^2$. Using Lemma \ref{localdensities-stable2}, one obtains
\begin{equation}\label{mainthm:eq5}
H(|d|)c_2(n,L)=2\times \frac{1}{F_2}H(|d|F_2^2),
\end{equation}
and one may complete the proof in a similar manner to the proof of the case when $L$ is odd.
\begin{extradetails}
\bf
Again, for $p\neq 2$, the argument is precisely as before because there is no difference locally between odd or even lattices.

We have either $\ord_2(n)=1$ or $\ord_2(n)=2$ by assumption. By Lemma \ref{localdensities-stable2}, we have 
\[
\alpha_2(n,L)=\begin{cases} 
\frac{3}{2}&\text{if }\ord_2(n)=2,\\ 
\frac{3}{2}&\text{if }\ord_2(n)=1\text{ and }\beta\equiv \frac{d_L}{2}\pmod{4},\\
2&\text{if }\ord_2(n)=1\text{ and }\beta\equiv 3\frac{d_L}{2}\pmod{8},\\ 
3&\text{if }\ord_2(n)=1\text{ and }\beta\equiv 7\frac{d_L}{2}\pmod{8}.
\end{cases}
\]
Hence, noting that
\[
\legendre{d}{2}=\begin{cases} 0&\text{if }\ord_2(n)=2,\\
0&\text{if }\ord_2(n)=1\text{ and }\beta\equiv \frac{d_L}{2}\pmod{4},\\
-1&\text{if }\ord_2(n)=1\text{ and }\beta\equiv 3\frac{d_L}{2}\pmod{8},\\ 
1&\text{if }\ord_2(n)=1\text{ and }\beta\equiv 7\frac{d_L}{2}\pmod{8},
\end{cases}
\]
we conclude that 
\begin{align*}
c_2(n,L)&=\alpha_2(n,L)\left(1-\legendre{d}{2}\frac{1}{2}\right)\frac{4}{3}\\
&=\begin{cases} 
2&\text{if }\ord_2(n)=2,\\ 
2&\text{if }\ord_2(n)=1\text{ and }\beta\equiv \frac{d_L}{2}\pmod{4},\\
4&\text{if }\ord_2(n)=1\text{ and }\beta\equiv 3\frac{d_L}{2}\pmod{8},\\ 
2&\text{if }\ord_2(n)=1\text{ and }\beta\equiv 7\frac{d_L}{2}\pmod{8}.
\end{cases}
\end{align*}
We hence need to check that
\[
\begin{cases}
2H(|d|)= 2\times \frac{1}{1} H(|d|) &\text{if }\ord_2(n)=2,\\ 
2H(|d|)=2\times \frac{1}{1} H(|d|) &\text{if }\ord_2(n)=1\text{ and }\beta\equiv \frac{d_L}{2}\pmod{4},\\
4H(|d|)=2\times \frac{1}{2} H(4|d|) &\text{if }\ord_2(n)=1\text{ and }\beta\equiv 3\frac{d_L}{2}\pmod{8},\\ 
2H(|d|)=2\times \frac{1}{2}H(4|d|)&\text{if }\ord_2(n)=1\text{ and }\beta\equiv 7\frac{d_L}{2}\pmod{8}.
\end{cases}
\]
The first two are tautologies and the last two follow by \eqref{eqn:H4nHn}.

Hence we conclude that 
$$
H(|d|)c_2(n,L)=2\times \frac{1}{F_2}H\left(|d|F_2^2\right).
$$
Plugging this into \eqref{mainthm:eq1}, then we obtain 
\[
r(n,\gen(L))=\frac{12\sqrt{n}}{\sqrt{d_L}\sqrt{|d|}}  \frac{2}{F_2} H\left(|d|F_2^2\right) \prod_{p\mid \frac{d_L}{2}} c_p(n,L),	
\]
Once again, the power of $2$ cancels in 
\[
\frac{2\sqrt{n}}{\sqrt{d_L}\sqrt{|d|F_2^2}}=\frac{\sqrt{d_Ln}}{\frac{d_L}{2}\sqrt{|d|F_2^2}}.
\]
We then directly simplify using the formulas for the $L$ odd case.
 \rm
\end{extradetails}
\end{proof}

\section{Formula construction for reductions to stable lattices}\label{section-formula-reduction}
Let $L$ be a primitive ternary lattice. Throughout this section, we always write $L_p=M_p\perp U_p$, where $M_p$ is the unimodular Jordan component and $\mathfrak{s}(U_p) \subseteq p\mathfrak{s}(M_p)$. Recall that if $\gcd(n,2d_L)=1$, then Jones \cite[Theorem 86]{Jones} obtained a formula for $r(n,\gen(L))$ in terms of Hurwitz class numbers. In the next theorem, we extend this by obtaining a formula for $r(n,\gen(L))$ whenever $n$ is relatively prime to the divisors of $2d_L$ for which the lattice $L$ is not stable. More precisely, we write the number of representations by the genus in terms of Hurwitz class numbers up to a computation of finitely-many local factors $c_{p}(n,L)$ for primes $p$ for which the lattice is not stable $p$-adically, and these local factors can also be easily computed in this case (see Remark \ref{rmk-n,P-coprime}).
\begin{theorem}\label{formula-n,P-coprime}
	Let $L$ be a primitive ternary lattice, and let $\mathcal{P}=\mathcal{P}_L$ and $\mathfrak{P}=\mathfrak{P}_L$ be defined as in \eqref{defn-mathcalP} and \eqref{defn-mathfrakP}, respectively, and let $\mathcal{P}'$ be the set of prime divisors of $2d_L$ which do not belong to $\mathcal{P}$. 
	Let $N\in\{1,2\}$ be such that $\n(L)=N\Z$, and let $n\in\N$ be such that $n\in N\Z_p^\times$ for any $p\in \mathcal{P}'$. Writing $4d_L=s\cdot t^2$ for some squarefree $s\in\N$ and $t\in\N$, we have
	\[
	r(n,\gen(L))=\frac{\epsilon}{st} \left[\prod_{p\mid \mathfrak{P}} \frac{p}{p+S_p^\ast(L)}\right] \prod_{p\in\mathcal{P}'} c_p(n,L) \times {\sum_{f\mid \mathfrak{P}}} \left[ \prod_{p\mid f}S_p^\ast(L)\right]\cdot f \cdot H\pfrac{esn}{f^2},
	\]
Here $S_p^\ast(L)=(-1)^{\delta_{p,2}}S_p(L)$ (cf. \eqref{HasseSymbol-IsoAniso}),
$c_p(n,L)=\a_p(n,L)\left(1-\legendre{d}{p}\frac{1}{p}\right) \left(1-\frac{1}{p^2}\right)^{-1}$ with  $d$ the discriminant of $\Q(\sqrt{-4d_Ln})=\Q(\sqrt{-sn})$, 
and $\epsilon$ and $e$ are constants defined by
\[
(\epsilon,e)=(\epsilon_L,e_L) =\begin{cases}
	\left(24(s,n,2),\frac{1}{(s,n,2)^2}\right)& \text{if  $2\not\in\mathcal{P}$, $\frac{sn}{(s,n)^2}\equiv 3 \pmod{4}$,}\\
	\left(12(s,n,2),\frac{4}{(s,n,2)^2}\right)& \text{if $2\not\in\mathcal{P}$, $\frac{sn}{(s,n)^2}\not\equiv 3 \pmod{4}$, or $2\in\mathcal{P}$, $\ord_2(d_L)=0$,}\\
	(48,1)& \text{if $2\in \mathcal{P}$, $\ord_2(d_L)=1$.}	
\end{cases}
\]
\end{theorem}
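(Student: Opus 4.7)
My plan is to mirror the structure of the proof of Theorem \ref{mainthm}, with the modification that for primes $p \in \mathcal{P}'$ where $L$ is not stable, I will leave the local factor $c_p(n, L)$ explicit in the final expression rather than rewriting it as a combination of Hurwitz class numbers. The hypothesis $n \in N \Z_p^\times$ for $p \in \mathcal{P}'$ guarantees that these local factors are unchanged by any of the reductions below.

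The first step would be to reduce to a base case via Propositions \ref{prop1}, \ref{prop2}, and \ref{prop3}. For a prime $q \nmid 2 d_L$, Proposition \ref{prop1} shows that both sides of the claimed formula transform identically under $n \mapsto nq^2$ (one uses that $c_p(nq^2, L) = c_p(n, L)$ for all $p \neq q$, which holds because $\alpha_p$ depends only on $n$ through its $p$-adic behavior). For a prime $p \in \mathcal{P}$, Propositions \ref{prop2} and \ref{prop3} play the analogous role, paralleling the reduction step carried out in the proof of Theorem \ref{mainthm}. After iterating, it suffices to prove the formula in the base case where $\ord_q(n) \leq 1$ for all $q \nmid 2 d_L$ and $\ord_p(n)$ takes its minimal compatible value for each $p \in \mathcal{P}$.

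In this base case, Lemma \ref{ternary-generic-theta} applies directly and gives
\[
r(n, \gen(L)) = \frac{12 \sqrt{n}}{\sqrt{d_L} \sqrt{|d|}} H(|d|) \prod_{p \mid 2 d_L} c_p(n, L),
\]
where $d$ is the discriminant of $\Q(\sqrt{-sn})$. Splitting the product as $\prod_{p \in \mathcal{P}} c_p(n,L) \cdot \prod_{p \in \mathcal{P}'} c_p(n,L)$, I would iterate the class-number combination identities employed in the proof of Theorem \ref{mainthm}, namely that $H(|d|f^2) \, c_p(n,L)$ equals $\tfrac{p}{p + S_p^\ast(L)} F_p^{-1} \bigl( H(|d|f^2 F_p^2) + S_p^\ast(L) \cdot p \, H(|d|f^2 F_p^2/p^2) \bigr)$ for $p \in \mathcal{P}$, with the parallel identity at $p = 2$. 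Iterating over $p \in \mathcal{P}$ converts the single Hurwitz class number into the sum $\sum_{f \mid \mathfrak{P}} [\prod_{p' \mid f} S_{p'}^\ast(L)] \, f \, H(|d| \prod_{p \in \mathcal{P}} F_p^2 / f^2)$, while the factors $c_p(n, L)$ for $p \in \mathcal{P}'$ pass through unchanged.

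The final step is a bookkeeping verification that $|d| \prod_{p \in \mathcal{P}} F_p^2 = esn$ and that the remaining prefactor equals $\epsilon/(st)$. Since $|d| F^2 = 4 d_L n = s t^2 n$ with $F = \prod_{p \mid 2 d_L} F_p$ in the base case (the base-case conditions force $F_q = 1$ for $q \nmid 2 d_L$), this reduces to identifying $\prod_{p \in \mathcal{P}'} F_p$ explicitly from the hypothesis $n \in N \Z_p^\times$ at each $p \in \mathcal{P}'$. I expect this case-by-case computation to be the main technical hurdle: each of the three cases defining $(\epsilon, e)$ corresponds to a specific combination of the $2$-adic conditions (whether $2 \in \mathcal{P}$ or $\mathcal{P}'$, the parity of $\ord_2(d_L)$, and the square class of $sn/(s,n)^2$ modulo $4$), and the factor $(s,n,2)$ arises from a common factor of $2$ between $s$ and $n$ that must be extracted when passing from $-4 d_L n$ to its fundamental discriminant $d$.
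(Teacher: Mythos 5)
Your proposal follows essentially the same route as the paper's proof: reduce to a bounded-valuation base case via Propositions \ref{prop1}--\ref{prop3}, apply Lemma \ref{ternary-generic-theta}, strip off the factors $c_p(n,L)$ for $p\in\mathcal{P}$ using the class-number identities \eqref{mainthm:eq2}, \eqref{mainthm:eq5}, and \eqref{mainthm:eq4} while carrying the $p\in\mathcal{P}'$ factors along untouched, and finally verify $|d|\prod_{p\in\mathcal{P}}F_p^2=esn$ and the prefactor $\epsilon/(st)$ by the case analysis you defer to bookkeeping, which is precisely what the paper does by splitting into $2\in\mathcal{P}$ and $2\notin\mathcal{P}$ (identifying $|d|(s,n)^2$ with $sn$ or $4sn$ according to $\tfrac{sn}{(s,n)^2}\bmod 4$). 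One small correction: after reducing by squares of primes in $\mathcal{P}$ the base case has $\ord_p(n)$ within one of its minimal value (e.g.\ $\ord_p(n)\le 1$ for odd $p\in\mathcal{P}$), not exactly the minimal value, which is in fact consistent with your subsequent use of $F_p\in\{1,p\}$ in \eqref{mainthm:eq4}.
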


\begin{remark}\label{rmk-n,P-coprime}
For any given $L$ and for any $n\in\N$ under the assumption in Theorem \ref{formula-n,P-coprime}, note that $\epsilon_L$ and $e_L$ are indeed constants since $(s,n,2)^2=4$ if $\n(L)=2\Z$ and $\ord_2(d_L)$ is odd, and $1$ otherwise.
Moreover, the computation of $c_p(n,L)$ is somewhat straightforward, so Theorem \ref{formula-n,P-coprime} yields a formula resembling Theorem \ref{thm:sumclassnumapproximate} for these $n$. Precisely, for any such $n$ and an odd prime $p\in \mathcal{P}'$, we have by \cite[Theorem 3.1]{Yang} that
\[
\a_p(n,L)=\begin{cases}
	1-\legendre{-d_{M_p}}{p}\frac{1}{p} & \text{if rank }M_p=2,\\
	1+\legendre{nd_{M_p}}{p}=0\text{ or }2 & \text{if rank }M_p=1.
\end{cases}
\]
Moreover, note that for any such $n$ and a prime $p$, $\legendre{d}{p}=\legendre{-4^bsn}{p}$ for some $b\in\{0,\pm1\}$. Thus, the value $\prod_{p\in\mathcal{P}'} c_p(n,L)$ only depends on the residue class of $n$ modulo $2^k\prod_{p\in\mathcal{P}'} p$ for some integer $0\le k \le 4$.
\end{remark}
\begin{proof}[Proof of Theorem \ref{formula-n,P-coprime}]
	We will show that the formula holds for any positive integer $n$ such that $\delta_{p,2}\ord_2(N)\le \ord_p(n)\le 1 +\delta_{p,2}\ord_2(N)$ for any prime $p\in\mathcal{P}$ or $p\nmid 2d_L$ (i.e. $p\not\in \mathcal{P}'$). 
	Then by Propositions \ref{prop1}, \ref{prop2}, and \ref{prop3}, the formula will also hold in general as is described in the proof of Theorem \ref{mainthm}.
	
	Assume that $\delta_{p,2}\ord_2(N) \le \ord_p(n)\le \delta_{p\notin\mathcal{P}'} +\delta_{p,2}\ord_2(N)$.
	By Lemma \ref{ternary-generic-theta}, we have
	\begin{equation}\label{eqn:rgenLclassnum}
	r(n,\gen(L))=\frac{12\sqrt{n}}{\sqrt{d_L}\sqrt{|d|}} H(|d|) \prod_{p\mid 2d_L} c_p(n,L).
	\end{equation}
	First, consider the case when $2\in\mathcal{P}$.
	For each prime $p\in \mathcal{P}$, we may repeat the same process of removing the term $c_p(n,L)$ from \eqref{eqn:rgenLclassnum} by using \eqref{mainthm:eq2} or \eqref{mainthm:eq5} and then iteratively applying \eqref{mainthm:eq4}, as is done in the proof of Theorem \ref{mainthm}. This yields
	$$
	r(n,\gen(L))=\frac{12\sqrt{n}}{\sqrt{d_L}\sqrt{esn}} \left[\prod_{p\in\mathcal{P}} \frac{p}{p+S_p^\ast(L)}\right] 
	\prod_{\substack{p\mid 2d_L \\ p\not\in\mathcal{P}}} c_p(n,L) \cdot \sum_{f\mid \mathfrak{P}} \left[ \prod_{p\mid f}S_p^\ast(L)\right]\cdot f \cdot H\pfrac{esn}{f^2},
	$$
	where $e=4$ if $\ord_2(d_L)=0$ and $e=1$ if $\ord_2(d_L)=1$.
\begin{extradetails}
First suppose that $L$ is odd. We write 
\[
-4sn=d F^2=d\prod_p F_p^2
\]
and first claim that 
\begin{multline}\label{eqn:notstableclaim-Lodd}
r(n,\gen(L))=\frac{12\sqrt{n}}{\sqrt{d_L}\sqrt{|d|\prod_{p\in\mathcal{P}}F_p^2}}\prod_{p\in\mathcal{P}}\frac{p}{p+S_p^*(L)} \prod_{\substack{p\mid d_L\\ p\notin \mathcal{P}}} c_{p}(n,L)\\
\times \sum_{f\mid \mathfrak{P}}\prod_{p\mid f}\left[S_p^\ast(L)\right]\cdot f H\pfrac{|d| \prod_{p\in \mathcal{P}} F_p^2}{f^2}.
\end{multline}

We begin by using \eqref{mainthm:eq2} to rewrite 
\begin{multline*}
r(n,\gen(L))=\frac{12\sqrt{n}}{\sqrt{d_L}\sqrt{|d|F_2^2}}\frac{2}{2+S_2^\ast(L)} \left(H(|d|F_2^2)+S_2^\ast(L)\cdot 2H\pfrac{|d|F_2^2}{4}\right) \prod_{p\mid d_L} c_p(n,L)\\
=\frac{12\sqrt{n}}{\sqrt{d_L}\sqrt{|d|\prod_{p\in\{2\}} F_p^2}}\prod_{p\in\{2\}} \frac{p}{p+S_p^\ast(L)} \prod_{\substack{p\mid d_L\\ p\notin\{2\}}} c_p(n,L) \sum_{f\mid 2} \prod_{p\mid f} S_{p}^*(L)\cdot f\cdot H\left(\frac{|d| \prod_{p\in\{2\}} F_p^2}{f^2}\right).
\end{multline*}
This is \eqref{eqn:notstableclaim-Lodd} if $\mathcal{P}=\{2\}$. We proceed to show \eqref{eqn:notstableclaim-Lodd} by induction on subsets of $\mathcal{P}$. Suppose that $2\in \mathcal{S}\subsetneq \mathcal{P}$ and 
\[
r(n,\gen(L))=\frac{12\sqrt{n}}{\sqrt{d_L}\sqrt{|d|\prod_{p\in\mathcal{S}} F_p^2}}\prod_{p\in\mathcal{S}} \frac{p}{p+S_p^\ast(L)} \prod_{\substack{p\mid d_L\\ p\notin\mathcal{S}}} c_p(n,L) \sum_{f\mid \prod_{p\in \mathcal{S}}p} \prod_{p\mid f} S_{p}^*(L) H\left(\frac{|d| \prod_{p\in \mathcal{S}} F_p^2}{f^2}\right).
\]
Now let $q\in \mathcal{P}\setminus\mathcal{S}$ be given. Setting $\mathcal{S}_q:=\mathcal{S}\cup\{q\}$, \eqref{mainthm:eq4} implies that  
\begin{multline*}
r(n,\gen(L))=\frac{12\sqrt{n}}{\sqrt{d_L}\sqrt{|d|\prod_{p\in\mathcal{S}_q} F_p^2}}\prod_{p\in\mathcal{S}_q} \frac{p}{p+S_p^\ast(L)} \prod_{\substack{p\mid d_L\\ p\notin\mathcal{S}_q}} c_p(n,L)\\
\times \sum_{f\mid \prod_{p\in \mathcal{S}_q}p} \prod_{p\mid f} S_{p}^*(L) H\left(\frac{|d| \prod_{p\in \mathcal{S}_q} F_p^2}{f^2}\right).
\end{multline*}
By induction on $\#\mathcal{S}$, we conclude \eqref{eqn:notstableclaim-Lodd}.

We now claim that 
\begin{equation}\label{eqn:dFp2=4sn}
|d|\prod_{p\in\mathcal{P}}  F_p^2 =4sn,
\end{equation}
which implies that 
\[
\frac{H\left(\frac{|d|\prod_{p\in\mathcal{P}} F_p^2}{f^2}\right)}{\sqrt{|d|\prod_{p\in\mathcal{P}}F_p^2}} = \frac{H\left(\frac{4sn}{f^2}\right)}{\sqrt{4sn}}.
\]
Plugging this formula into \eqref{eqn:notstableclaim-Lodd} then yields the claimed identity.

To see \eqref{eqn:dFp2=4sn}, first recall that $-4sn=dF^2$. Since $s$ is squarefree, for an odd prime $p$ we conclude that $p\mid F$ if and only if either $p^2\mid n$ or $p\mid (s,n)$. By assumption, for every odd prime $p$ we have  $0\leq \ord_{p}(n)\leq 1$ and $\ord_p(n)=0$ if $p\in\mathcal{P}'$. Thus $p^2\mid n$ never occurs, and $p\mid (s,n)$ implies that $p\in\mathcal{P}$. Writing $F=\prod_p F_p$, we conclude that for odd primes $p$ we have 
\[
F_p=\begin{cases} p&\text{if }p\mid (s,n)\text{ and }p\in\mathcal{P},\\ 
1&\text{otherwise}.
\end{cases}
\]
Therefore 
\[
(s,n)^2=\prod_{p\neq 2} F_p^2 = \prod_{p\in\mathcal{P}\setminus\{2\}} F_p^2.
\]
For odd primes $p$, also have $p\mid d$ if and only if either ($p\mid s$ and $p\nmid n$) or ($pmid n$ and $p\nmid s$). The product of all such primes is precisely 
\[
\frac{sn}{(s,n)}.
\]
Thus 
\[
4sn=|d|F^2= |d|F_2^2 \prod_{p\neq 2}F_p^2 = |d|\prod_{p\in\mathcal{P}} F_p^2,
\]
as claimed.

Now suppose that $L$ is even. If we define $F$ to be the positive integer such that $-sn=dF^2$, where $d$ is the discriminant of $\Q(\sqrt{-4d_Ln})=\Q(\sqrt{-snt^2})=\Q(\sqrt{-sn})$, then we have
\[
\begin{aligned}
	r(n,\gen(L))&=\frac{12\sqrt{n}}{\sqrt{d_L}\sqrt{|d|\prod_{p\in\mathcal{P}}F_p^2}}\cdot2\prod_{p\in\mathcal{P}\setminus\{2\}}\frac{p}{p+S_p^*(L)} \prod_{\substack{p\mid d_L\\ p\notin \mathcal{P}}} c_{p}(n,L) \sum_{f\mid \mathfrak{P}}\prod_{p\mid f}\left[S_p^\ast(L)\right]\cdot f H\pfrac{|d| \prod_{p\in \mathcal{P}} F_p^2}{f^2}\\
	&=\frac{24\sqrt{n}}{\sqrt{d_L}\sqrt{|d|\prod_{p\in\mathcal{P}}F_p^2}}\prod_{p\mid \mathfrak{P}}\frac{p}{p+S_p^*(L)} \prod_{\substack{p\mid d_L\\ p\notin \mathcal{P}}} c_{p}(n,L) \sum_{f\mid \mathfrak{P}}\prod_{p\mid f}\left[S_p^\ast(L)\right]\cdot f H\pfrac{|d| \prod_{p\in \mathcal{P}} F_p^2}{f^2}
\end{aligned}
\]
Under the assumption that $n\in N\Z_p^\times$ for any $p\in\mathcal{P}'$, we have 
\[
|d|\prod_{p\in\mathcal{P}} F_p^2=sn,
\]
hence we obtain the formula since $\frac{24\sqrt{n}}{\sqrt{d_L}\sqrt{|d|\prod_{p\in\mathcal{P}}F_p^2}}=\frac{24\sqrt{n}}{\sqrt{st^2/4}\sqrt{sn}}=\frac{48}{st}$.
\end{extradetails}
	Plugging $d_L=st^2/4$ into the above formula, we have the theorem.
	
	Now we consider the case when $2\notin \mathcal{P}$. Note that in this case, $L$ is not stable at $2$ so that we may not apply Proposition \ref{prop3}. However, we may still repeat the same process of removing the term $c_p(n,L)$ for each prime $p\in \mathcal{P}$ from \eqref{eqn:rgenLclassnum} as is done in the proof of Theorem \ref{mainthm} to obtain
	\[
	r(n,\gen(L))=\frac{12\sqrt{n}}{\sqrt{d_L}\sqrt{|d|\frac{(s,n)^2}{(s,n,2)^2}}} \left[\prod_{p\in\mathcal{P}} \frac{p}{p+S_p^\ast(L)}\right] 
	\prod_{\substack{p\mid 2d_L \\ p\not\in\mathcal{P}}} c_p(n,L) \cdot \sum_{f\mid \mathfrak{P}} \left[ \prod_{p\mid f}S_p^\ast(L)\right]\cdot f \cdot H\pfrac{|d|\frac{(s,n)^2}{(s,n,2)^2}}{f^2}.
	\]
	Plugging $4d_L=st^2$, and $|d|(s,n)^2=sn$ if $\frac{sn}{(s,n)^2}\equiv 3 \pmod{4}$, $|d|(s,n)^2=4sn$ if $\frac{sn}{(s,n)^2}\not\equiv 3 \pmod{4}$ into the above formula, we have the theorem.
\begin{extradetails}
Define $F$ to be the positive integer such that $-4sn=dF^2$, where $d$ is the discriminant of $\Q(\sqrt{-4d_Ln})=\Q(\sqrt{-snt^2})=\Q(\sqrt{-4sn})$.
	In this case, we also obtain
	\[
	r(n,\gen(L))=\frac{12\sqrt{n}}{\sqrt{d_L}\sqrt{|d|\prod_{p\in\mathcal{P}}F_p^2}}\prod_{p\in\mathcal{P}}\frac{p}{p+S_p^*(L)} \prod_{\substack{p\mid d_L\\ p\notin \mathcal{P}}} c_{p}(n,L) \sum_{f\mid \mathfrak{P}}\prod_{p\mid f}\left[S_p^\ast(L)\right]\cdot f H\pfrac{|d| \prod_{p\in \mathcal{P}} F_p^2}{f^2}
	\]
	Under the assumption that $n\in N\Z_p^\times$ for any $p\in\mathcal{P}'$, note that for any odd prime $p\mid d_L$, (note that $n$ is indeed squarefree under the assumption in this case)
	\begin{center}
		$p\nmid d$ and $F_p=p$ $\iff$ $p\mid (s,n)$ $\iff$ $p\mid s$, $p\mid n$, $p\in \mathcal{P}$ $\iff$ $p\mid n$, $p\in \mathcal{P}$,
	\end{center}
	and
	\begin{center}
		$p\mid d$ and $F_p=1$ $\iff$ $p\mid s$ and $p\nmid n$, or $p\nmid s$, $p\mid n$ $\iff$ $p\mid s$ and $p\nmid n$ \\(since $p\mid n$ implies $p\in\mathcal{P}$ hence $p\mid s$).
	\end{center}
	Therefore, it is clear for any odd prime $p\mid d_L$ that
	\[
	\ord_p(d)=\ord_p\left(\frac{sn}{(s,n)^2}\right)=\ord_p\left(\frac{4sn}{(s,n)^2}\right),
	\]
	and
	\[
	|d|\prod_{p\in\mathcal{P}} F_p^2= |d|\frac{(s,n)^2}{(s,n,2)^2}
	\]
	For $p=2$, since $2\notin\mathcal{P}$ (i.e. $2\in \mathcal{P}'$), we have:\\
	If $L$ is even, then $n\in 2\Z_2^\times$. 
	\begin{enumerate}
		\item If $2\mid s$, then $\begin{cases}
			\ord_2(d)=2=\ord_2(\frac{4sn}{(s,n)^2}) & \text{if } \frac{sn}{(s,n)^2}\equiv sn/4 \not\equiv 3 \pmod{4},\\
			\ord_2(d)=0=\ord_2(\frac{sn}{(s,n)^2}) & \text{if } \frac{sn}{(s,n)^2}\equiv sn/4 \equiv 3 \pmod{4}.
		\end{cases}$
		\item If $2\nmid s$, then $\ord_2(d)=3=\ord_2(\frac{4sn}{(s,n)^2})$ (in which case $\frac{sn}{(s,n)^2}\equiv 2\pmod{4}$).
	\end{enumerate}
	If $L$ is odd, then $n\in \Z_2^\times$. 
	\begin{enumerate}
		\item If $2\mid s$, then $\ord_2(d)=3=\ord_2(\frac{4sn}{(s,n)^2})$ (in which case $\frac{sn}{(s,n)^2}\equiv 2\pmod{4}$).
		
		\item If $2\nmid s$, then $\begin{cases}
			\ord_2(d)=2=\ord_2(\frac{4sn}{(s,n)^2}) & \text{if } \frac{sn}{(s,n)^2}\equiv sn \not\equiv 3 \pmod{4},\\
			\ord_2(d)=0=\ord_2(\frac{sn}{(s,n)^2}) & \text{if } \frac{sn}{(s,n)^2}\equiv sn \equiv 3 \pmod{4}.
		\end{cases}$
	\end{enumerate}
	To sum up, we have
	\[
	\ord_2(d) = \begin{cases}
		\ord_2\left(\frac{4sn}{(s,n)^2}\right) & \text{if } \frac{sn}{(s,n)^2} \not\equiv 3 \pmod{4},\\
		\ord_2\left(\frac{sn}{(s,n)^2}\right) & \text{if } \frac{sn}{(s,n)^2}\equiv 3 \pmod{4}.
	\end{cases}
	\]
	Therefore, since $p\mid d \implies p\mid 2d_L$ we have
	\[
	|d|\prod_{p\in\mathcal{P}} F_p^2= |d|\frac{(s,n)^2}{(s,n,2)^2} = \frac{1}{(s,n,2)^2}\begin{cases}
		4sn & \text{if } \frac{sn}{(s,n)^2}\not\equiv 3\pmod{4},\\
		sn & \text{if } \frac{sn}{(s,n)^2}\equiv 3\pmod{4}.\\
	\end{cases}
	\]
	
\end{extradetails}

\end{proof}
Since every lattice can be reduced to a stable lattice after applying finitely-many Watson transformations by Corollary \ref{cor:finitereduction} and we have a formula for $r(n,\gen(L))$ when $L$ is stable by Theorem \ref{mainthm}, a formula for $r(n,\gen(L))$ for an arbitrary lattice $L$ can be recursively obtained from formulas for $r(n,\gen(\lambda_m(L)))$ once one obtains a relation between $r(n,\gen(L))$ and $r(n,\gen(\lambda_m(L)))$ (note that in practice we do not always have to apply the full reduction, as we may halt the reduction process whenever the reduced form satisfies the conditions assumed in Theorem \ref{formula-n,P-coprime}). The next theorem provides such a relation.
\begin{theorem}\label{reduction-formulas}
	Let $L$ be a ternary lattice and $n\in\N$. Suppose that $L$ is not stable at a prime $p$, and write $L_p=M_p\perp U_p$, where $M_p$ is the unimodular Jordan component and $\mathfrak{s}(U_p)\subseteq p\Z_p$.
	
	\begin{enumerate}[label={\rm(\arabic*)}, leftmargin=*]
		\item\label{reduction-formulas:1}  If $M_p$ is anisotropic and $\n(L_p)=\Z_p$, then
		$$
		r(pn,\gen(L)) = r(pn,\gen(\Lambda_p(L)))=r(n^\ast,\gen(\lambda_p(L))),
		$$
		where $n^\ast=n$ if $\mathfrak{s}(U_p)=p\Z_p$ or $p=2$ with $\text{rank}(M_2)=2$, and $n^\ast = n/p$ otherwise.
		\item\label{reduction-formulas:2} If $M_2$ is anisotropic and $\n(L_2)=2\Z_2$, equivalently $M_2\cong \left(\begin{smallmatrix}	2&1\\1&2 \end{smallmatrix}\right)$ with $\ord_2(d_L)\ge2$, then
		$$
		r(4n,\gen(L)) = r(n,\gen(\lambda_4(L))).
		$$
		
		\item\label{reduction-formulas:3} Otherwise, $L_p=\left(\begin{smallmatrix}	0&1\\1&0 \end{smallmatrix}\right)\perp \la p^m \varepsilon\ra$ for some $m\ge2$, $\varepsilon\in\Z_p^\times$, and we have
		$$
		r(epn,\gen(L)) = 2r(en,\gen(K))-r(en/p,\gen(\lambda_{ep}(L))),
		$$
		where $e=2$ if $p=2$, and $1$ otherwise, and $K$ is a lattice such that $K_p\cong \left(\begin{smallmatrix}	0&1\\1&0 \end{smallmatrix}\right)\perp \la p^{m-1} \varepsilon\ra$ and $K_q\cong L_q$ for any prime $q\neq p$.
	\end{enumerate}
In particular, $\ord_p(d_{\lambda_p(L)})<\ord_p(L)$ and $\ord_p(d_K)<\ord_p(L)$ in any case.
\end{theorem}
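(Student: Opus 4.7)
My strategy treats the three cases through a common three-step template: (i) a local analysis at $p$, (ii) a genus-level telescoping via the Watson machinery from Section \ref{sec:prelim}, and (iii) a rescaling identifying $\Lambda_m$ with $\lambda_m$. Case (3) additionally requires an inclusion--exclusion.

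For cases (1) and (2), the plan is to decompose $\bm{x} = \bm{x}_M + \bm{x}_U \in M_p \perp U_p$ for any $\bm{x} \in L_p$ with $Q(\bm{x})$ divisible by the relevant power of $p$. The hypothesis $\mathfrak{s}(U_p) \subseteq p\, \mathfrak{s}(M_p)$ makes $Q(\bm{x}_U)$ automatically divisible by that power, so $Q(\bm{x}_M)$ must vanish modulo it as well; the anisotropy of $M_p$ then forces $\bm{x}_M \in pM_p$. Combined with the Jordan description of $\Lambda_p(L)_p = pM_p \perp U_p$ (respectively $\Lambda_4(L)_2 = 2\mathbb{A} \perp U_2$) from Lemma \ref{even-transform}, this yields the set equality $R(pn, T) = R(pn, \Lambda_p(T))$ (respectively $R(4n, T) = R(4n, \Lambda_4(T))$) for every $T \in \gen(L)$. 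Summing these equalities with weights $1/o(T)$ and applying the partition \eqref{partition-genus}, the orbit count \eqref{sum-orbit}, and Lemma \ref{weight-ratio} telescopes them into $r(pn, \gen(L)) = r(pn, \gen(\Lambda_p(L)))$ (and analogously for $\Lambda_4$). The final rescaling to the primitive lattice $\lambda_m(L)$, tracked via Lemmas \ref{even-transform} and \ref{odd-transform}, translates norm $pn$ into norm $n^\ast$; the case split $n^\ast = n$ versus $n^\ast = n/p$ reflects whether an extra $p$ is divided out during primitivization, which in turn depends on $\mathfrak{s}(U_p)$ and $\mathrm{rank}(M_2)$.

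Case (3) is the principal obstacle: since $M_p \cong \H$ is isotropic, some $\bm{x} \in L_p$ with $ep \mid Q(\bm{x})$ have $\bm{x}_M \notin pM_p$, so the inclusion argument from Step 1 fails. My plan is to partition $R(epn, L_p)$ according to whether $\bm{x}_M \in pM_p$. The vectors in $\Lambda_{ep}(L)_p$ contribute (after rescaling) the term $r(en/p, \gen(\lambda_{ep}(L)))$, while the ``escape'' vectors $\bm{x}_M \notin pM_p$ are matched bijectively with norm-$en$ vectors in the auxiliary lattice $K$, where $K$ agrees with $L$ away from $p$ and $K_p = \H \perp \la p^{m-1}\varepsilon\ra$. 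The factor $2$ arises from a two-to-one count attached to the pair of hyperbolic generators of $\H$. These local identities pass to genus averages via Theorem \ref{Minkowski-Siegel} once one notes that $L$, $K$, and $\lambda_{ep}(L)$ have identical local structure at every prime $q \neq p$. The main technical task is verifying the local density identity at $p$ itself, which can be reduced to a direct computation of $\alpha_p$ for the three explicit Jordan decompositions involved.

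Lastly, the discriminant bounds $\ord_p(d_{\lambda_p(L)}) < \ord_p(d_L)$ and $\ord_p(d_K) < \ord_p(d_L)$ follow at once: Lemmas \ref{even-transform} and \ref{odd-transform} strictly decrease the $p$-adic valuation of the discriminant under each Watson transformation, while the definition of $K_p$ lowers the $U_p$-valuation by one.
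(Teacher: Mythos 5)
Your treatment of cases (1) and (2) coincides with the paper's: the local observation that every vector whose norm is divisible by $p$ (resp.\ by $4$) already lies in $\Lambda_p(T)$ (resp.\ $\Lambda_4(T)$), the genus-level summation via \eqref{partition-genus}, \eqref{sum-orbit} and Lemma \ref{weight-ratio}, and the rescaling bookkeeping from Lemmas \ref{even-transform} and \ref{odd-transform} are exactly the argument given there, and they are fine.

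Case (3) is where your plan has a genuine gap. The ``escape'' vectors of a lattice $S\in\gen(L)$ are \emph{not} matched two-to-one with vectors of norm $en$ in a fixed lattice $K$. What is true is that every vector of $S$ of norm divisible by $ep$ lies in one of the two index-$p$ sublattices $\Gamma_{p,1}(S),\Gamma_{p,2}(S)$ with norm ideal $ep\Z$, whose intersection is $\Lambda_{ep}(S)$; after rescaling these two sublattices lie in $\gen(K)$, but they need not be isometric to each other, let alone to a chosen representative $K$, and their distribution over the classes of $\gen(K)$ is controlled only on genus average. So the factor $2$ cannot be extracted lattice-by-lattice from ``the pair of hyperbolic generators''. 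The paper handles this by inclusion--exclusion, $r(epn,S)=r(en,\Gamma_{p,1}(S)^{1/p})+r(en,\Gamma_{p,2}(S)^{1/p})-r(epn,\Lambda_{ep}(S))$, rewriting the first two terms as $\sum_{[T]\in\gen(K)/_\sim} \tfrac{r(T^p,S)}{o(T)}\,r(en,T)$, and then invoking $r(T^p,S)=\tilde r(S^p,T)$ together with the genus-averaged count $\sum_{[S]}\tilde r(S^p,T)/o(S)=2p$ (resp.\ $3$ or $4$ when $p=2$) from \cite{JuLeeOh} and the mass ratio $w(K)/w(L)$ computed via \cite[Theorem 5.6.3]{KiBook}; only this combination produces the constant $2$. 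Your fallback route---deducing the identity from Theorem \ref{Minkowski-Siegel} by matching local densities---is a legitimate alternative in principle, but as written it is only an assertion: after the archimedean and away-from-$p$ factors are matched, the entire content of case (3) becomes the local identity $\alpha_p\!\left(epn,\H\perp\la p^m\varepsilon\ra\right)=2\,\alpha_p\!\left(en,\H\perp\la p^{m-1}\varepsilon\ra\right)-\alpha_p\!\left(en/p,\H\perp\la p^{m-2}\varepsilon\ra\right)$ (the last term absent when $p\nmid n$), which you never verify; moreover the claimed ``identical local structure at every $q\neq p$'' needs the scaling bookkeeping, since the $K$ arising from the construction is $L_q$ rescaled by the unit $1/p$ at $q\neq p$, and it is precisely this rescaling that gives $\alpha_q(en,K)=\alpha_q(epn,L)$ (comparing $en$ with $epn$ for $L_q$ itself would not work in general). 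Until that computation is carried out, case (3) remains unproved in your proposal.
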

\begin{proof}
	\ref{reduction-formulas:1} Assume that $M_p$ is anisotropic and $\n(L_p)=\Z_p$. Then it is well known that 
	$$
	r(pn,T)=r(pn,\Lambda_p(T)).
	$$
	for any $T\in \gen(L)$. Summing up over $[T]\in\gen(L)/_\sim$, by \eqref{partition-genus}, \eqref{sum-orbit}, and Lemma \ref{weight-ratio},
	$$
	\begin{aligned}
		\sum_{[T]\in\gen(L)/_\sim} \frac{r(pn,T)}{o(T)} &= \sum_{[T]\in\gen(L)} \frac{r(pn,\Lambda_p(T))}{o(T)}\\
		&\overset{\eqref{partition-genus}}{=}\sum_{[N]\in\gen(\Lambda_p(L))/_\sim}\sum_{[T]\in\Gamma_p^L(N)/_\sim} \frac{o(N)}{o(T)} \frac{r(pn,N)}{o(N)}\\
		&\overset{\eqref{sum-orbit}}{=}\sum_{[N]\in\gen(\Lambda_p(L))/_\sim}\left| \Gamma_p^L(N) \right| \frac{r(pn,N)}{o(N)}\\
		&=\frac{w(L)}{w(\Lambda_p(L))} \sum_{[N]\in\gen(\Lambda_p(L))/_\sim} \frac{r(pn,N)}{o(N)}.\\
	\end{aligned}
	$$
	This yields the first identity of part \ref{reduction-formulas:1} of the theorem. The second identity follows by  Lemmas \ref{even-transform} and \ref{odd-transform}.
\begin{extradetails}
We scale the lattice by either a factor of $p$ or $p^2$, depending on the highest power that we can scale to get a primitive lattice. For an odd prime $p$, Lemma \ref{even-transform} implies that
\[
\Lambda_{p}(L)_p=pM_p\perp U_p.
\]
If $\mathfrak{s}(U_p)=p\Z_p$, then scaling by a factor of $p$ gives a primitive lattice. If $\mathfrak{s}(U_p)\subseteq p^2\Z_p$, then we can scale by a factor of $p^2$.

For $p=2$, we use Lemma \ref{odd-transform}. If $M_2$ has rank $2$, then Lemma \ref{odd-transform} (a) implies that we scale by a factor of $2$. If $M_2$ has rank $1$ and $\mathfrak{s}(U_2)=2\Z_2$, then Lemma \ref{odd-transform} (c) implies that we scale by a factor of $2$. Finally, if $M_2$ has rank $1$ and $\mathfrak{s}(U_2)\subseteq 4\Z_2$, then Lemma \ref{odd-transform} (b) implies that we scale by a factor of $4$.
\end{extradetails}
		
	\ref{reduction-formulas:2} In this case, $r(4n,T)=r(4n,\Lambda_4(T))$ for any $T\in \gen(L)$, and $\lambda_4(L)=\Lambda_4(L)^{\frac{1}{4}}$. The rest of the proof may be done in a similar way to \ref{reduction-formulas:1}.

	\ref{reduction-formulas:3} This is indeed a consequence of the results in \cite{JuLeeOh}, which introduce a generalization of Watson transformation. 
	We first provide the proof for the case when $p$ is odd.	
	Let $S\in \gen(L)$. There are exactly two sublattices of $S$, say $\Gamma_{p,1}(S)$ and $\Gamma_{p,2}(S)$, of index $p$ and of norm ideal $p\Z_p$.
	Note that $\Gamma_{p,1}(S) \cap \Gamma_{p,2}(S)=\Lambda_{p}(S)$, and they satisfy
	$$
	\begin{aligned}
		r(pn,S)&=r(pn,\Gamma_{p,1}(S)) + r(pn,\Gamma_{p,2}(S)) - r(pn,\Lambda_{p}(S))\\
		&=r(n,\Gamma_{p,1}(S)^\frac{1}{p}) + r(n,\Gamma_{p,2}(S)^\frac{1}{p}) - r(pn,\Lambda_{p}(S)).		
	\end{aligned}
	$$
	Moreover, $\Gamma_{p,1}(S)$ and $\Gamma_{p,2}(S)$ are the only sublattices of $S$ which belong to $\gen(K^p)$.
	On the other hand, for any lattice $T\in\gen(K)$, the number of sublattices in $S$ that are isometric to $T^p$ is $\frac{r(T^p,S)}{o(T)}$.
	Hence, from the above equality and properties, we have
	\begin{equation}\label{eqn:reduction-1}
		r(pn,S)=\sum_{[T]\in\gen(K)/_\sim}\frac{r(T^p,S)}{o(T)} r(n,T) - r(pn,\Lambda_{p}(S)).		
	\end{equation}
	We refer the readers to the proof of Proposition 4.1 of \cite{JuLeeOh} for more details.
	
	Multiplying by $\frac{1}{o(S)}$ on both sides of \eqref{eqn:reduction-1} and summing up over $[S]\in\gen(L)/_\sim$, we have
\begin{equation}\label{eqn:reduction-2}
	\sum_{[S]\in\gen(L)/_\sim} \frac{r(pn,S)}{o(S)}=\sum_{[S]\in\gen(L)/_\sim}\sum_{[T]\in\gen(K)/_\sim}\frac{r(T^p,S)}{o(S)} \frac{r(n,T)}{o(T)} - \sum_{[S]\in\gen(L)/_\sim} \frac{r(pn,\Lambda_p(S))}{o(S)}.		
\end{equation}
Following the proof of \ref{reduction-formulas:1}, the final sum on the right-hand side of \eqref{eqn:reduction-2} is equal to 
\begin{equation}\label{eqn:secondsummand}
	w(L)\cdot r(pn,\gen(\Lambda_p(L)))=w(L)\cdot r(n/p,\gen(\lambda_p(L))),
\end{equation}
where in the last step we the used the fact that  $\mathfrak{s}(\Lambda_p(L))=p^2\Z_p$.  To evaluate the first sum in \eqref{eqn:reduction-2}, we interchange the sums and then use the fact from \cite[Lemma 2.7]{JuLeeOh} that 
\[
	r(T^p,S)=\tilde{r}(S^p,T):=\{\sigma:S^p\rightarrow T \mid T/\sigma(S^p)\simeq \Z/p\Z\oplus\Z/p\Z\}.
\]
Since $m\ge 2$, \cite[Lemma 3.1]{JuLeeOh} implies that 
\begin{equation}\label{eqn:firstsummandweight}
	\sum_{[S]\in\gen(L)/_\sim}\frac{r(T^p,S)}{o(S)} = \sum_{[S]\in\gen(L)/_\sim}\frac{\tilde{r}(S^p,T)}{o(S)}= 2p.
\end{equation}
	Hence, the first double sum in \eqref{eqn:reduction-2} is equal to 
	$$
	2p\cdot \sum_{[T]\in\gen(K)/_\sim} \frac{r(n,T)}{o(T)} = 2p \cdot w(K) \cdot r(n,\gen(K)).
	$$
	Therefore, we have
	$$
	r(pn,\gen(L))=2p\cdot \frac{w(K)}{w(L)} \cdot r(n,\gen(K)) - r(n/p,\gen(\lambda_p(L))).
	$$
	We claim that $2p\cdot \frac{w(K)}{w(L)}=2$, from which the theorem follows.  By Theorem \ref{Minkowski-Siegel}, we have 
	\[	
	\frac{w(K)}{w(L)}=\frac{(d_K)^2}{(d_L)^2}\cdot \frac{\a_p(L,L)}{\a_p(K,K)}=\frac{1}{p^2}\cdot\frac{\beta_p(L,L)}{\beta_p(K,K)}
	\]
	and by using the formula in \cite[Theorem 5.6.3]{KiBook}, we may compute the local densities 
	\[
	\beta_p(K,K)=4p^{m-1}\left(1-\frac{1}{p}\right) \quad \text{and} \quad \beta_p(L,L)=4p^{m}\left(1-\frac{1}{p}\right).
	\]
	Thus, $\frac{w(K)}{w(L)}=\frac{1}{p}$, which proves the claim, hence the proof of \ref{reduction-formulas:3} when $p$ is odd.
\begin{extradetails}
	Note that in our case (3), letting $-d_L=p^m\varepsilon$ ($m\ge2$), we have
	\[
	L_p\cong \begin{pmatrix}
		0 & 1\\
		1 & 0
	\end{pmatrix} \perp \langle p^m\varepsilon\rangle, \
	K_p\cong \begin{pmatrix}
				0 & 1\\
		1 & 0
	\end{pmatrix} \perp \langle p^{m-1}\varepsilon\rangle,
	\]
	so \cite[Theorem 5.6.3]{KiBook} implies the following.
	Let $N_i = \perp_{j\in\Z} M_j$ be a regular quadratic lattice over $\Z_p$, where $M_j$ is $p^j$-modular or $\{0\}$, and $M_j\neq \{0\}$ occurs only for finitely many $j$, and write $M_j=N_j^{p^j}$ for the scaling of a unimodular lattice $N_j$ by $p^j$. Put
	\[
	n_j=\text{rank}N_j = \text{rank}M_j, \quad w:=\sum_j jn_j\{\sum_{k>j}n_k+(n_j+1)/2\}, \quad P(m):=\prod_{i=1}^m (1-p^{-2i}) (=1 \text{ if } m=0).
	\]
	With, for a regular quadratic space $U$ or $U=\{0\}$ over $\Z/p\Z$,
	\[
	\chi(U):=\begin{cases}
		0 & \text{if $\dim U$ is odd},\\
		1 & \text{if $U$ is hyperbolic space},\\
		-1& \text{otherwise},
	\end{cases}
	\]
	and for unimodular lattice $M$ over $\Z_p$ with $\n(M)=2\mathfrak{s}(M)$, we define a regular quadratic space $\bar{M}:=(M/pM,\frac{1}{2}Q)$ over $\Z/p\Z$, and put 
	\[
	\chi(M):=\chi(\bar{M})
	\]
	If $p\neq 2$, then 
	\[
	\beta_p(N,N)=2^sp^wPE,
	\] 
	where
	\begin{gather*}
		s:=\# \{j|M_j\neq \{0\}\},\ P:=\prod_jP(\lfloor n_j/2\rfloor), \ E:=\prod_{j:M_j\neq 0}(1+\chi(N_j)p^{-n_j/2})^{-1}
	\end{gather*}
	In our case, for $L_p\cong \begin{pmatrix}
		0 & 1\\
		1 & 0
	\end{pmatrix} \perp \langle p^m\varepsilon\rangle$ with $m\ge1$,
	\begin{gather*}
	n_0=2, \ n_m=1,\ n_j=0 \text{ for any }j\neq0,m ,\ w=m \cdot (\sum_{k>m} n_k+1)=m, \ s=2,\\
	P=P(0)P(1)=1\cdot \left(1-\frac{1}{p^2}\right), \ E=\left(1+\frac{1}{p}\right)^{-1}.
	\end{gather*}
Hence
\[
\beta_p(L,L) = 2^sp^wPE = 4p^m\left(1-\frac{1}{p^2}\right)\left(1+\frac{1}{p}\right)^{-1} = 4p^m\left(1-\frac{1}{p}\right).
\]
Since $K_p\cong \begin{pmatrix}
	0 & 1\\
	1 & 0
\end{pmatrix} \perp \langle p^{m-1}\varepsilon\rangle$, and we are assuming $m\ge2$, we may apply the above formula to get
\[
\beta_p(K,K) =  4p^{m-1}\left(1-\frac{1}{p}\right).
\]	
\end{extradetails}

The proof for $p=2$ is the same, although one needs to be careful about the term ``primitive" lattice. 
	In \cite{JuLeeOh}, it means a lattice $L$ with $\n(L)=\Z$, whereas we mean a lattice $L$ with $\mathfrak{s}(L)=\Z$, which cause some differences in the definitions of the Watson transformations.
	Precisely, using the terminology from this paper, two sublattices $\Gamma_{2,1}(S)$ and $\Gamma_{2,2}(S)$ are the only sublattices of $S$ of index $2$ and of norm $4\Z_2$, and their intersection is $\Lambda_4(S)$. 
	If we replace $n$ by $2n$, $\Lambda_p(S)$ by $\Lambda_4(S)$, and $\lambda_p(S)$ by $\lambda_4(S)$, respectively, then all the equalities until  \eqref{eqn:reduction-2} and \eqref{eqn:secondsummand} holds.
	On the other hand, by Lemma \cite[Lemma 3.1]{JuLeeOh}, \eqref{eqn:firstsummandweight} should be replaced by
	\[
		c:=\sum_{[S]\in\gen(L)/_\sim}\frac{r(T^2,S)}{o(S)} = \sum_{[S]\in\gen(L)/_\sim}\frac{\tilde{r}(S^2,T)}{o(S)}= \begin{cases}
			3 & \text{if } m=2,\\
			4 & \text{if } m\ge 3,
		\end{cases}
	\]
	so that we may obtain
	\begin{equation}\label{eqn:reductionp=2}
	r(4n,\gen(L))=c\cdot \frac{w(K)}{w(L)} \cdot r(2n,\gen(K)) - r(n,\gen(\lambda_4(L))).
	\end{equation}
	On the other hand,  one may compute by \cite[Theorem 5.6.3]{KiBook} that
	\[
	\beta_2(L,L)=2^{m+1} \quad \text{and} \quad 
	\beta_2(K,K)=\begin{cases} 3 &\text{if } m=2,\\ 2^m & \text{if }m\ge 3. \end{cases}
	\]
\begin{extradetails}
	If $p=2$, then 
	\[
	\beta_2(N,N)=2^{w-q}PE,
	\] 
	where $q,P,E$ are defined as follows: 
	With 
	\[
	q_j:=\begin{cases}
		0 & \text{if $N_j$ is even},\\
		n_j & \text{if $N_j$ is odd and $N_{j+1}$ is even},\\
		n_j+1 & \text{if $N_j$ and $N_{j+1}$ are odd},\\
	\end{cases}
	\]
	we put 
	\[
	q:=\sum_j q_j.
	\]
	We write for a unimodular lattice $M$, $M=M(e)\perp M(o)$, where $M(e)$ is even and $M(o)$ is either odd or $\{0\}$ with $\text{rank}\, M(o)\le 2$. Then we put
	\[
	P:=\prod_j P\left(\frac{1}{2}\text{rank}N_j(e)\right).
	\]
	Defining $E_j$ by 
	\[
	\begin{cases}
		\frac{1}{2}(1+\chi(N_j(e)) 2^{-\text{rank}N_j(e)/2}) & \text{if both $N_{j-1}$ and $N_{j+1}$ are even} \\
		&	\text{and unless $N_j(o)\cong \langle\varepsilon_1,\varepsilon_2\rangle$ with $\varepsilon_1\equiv \varepsilon_2 \pmod{4}$},\\
		\frac{1}{2} & \text{otherwise},
	\end{cases}
	\]
	we put 
	\[
	E:=\prod_j E_j^{-1}
	\]
	Note that $q_j$ depends on $N_j$ and $N_{j+1}$, and that
	\[
	q_j=0\quad \text{if } N_j=0.
	\]
	Moreover, $E_j$ depends on $N_{j-1}$, $N_j$ and $N_{j+1}$ and
	\[
	E_j=1 \quad \text{if } N_{j-1}=N_j=N_{j+1}=0,
	\]
	but
	\[
	E_j=\begin{cases}
		1 & \text{if either } (N_{j-1}=N_j=0 \text{ and } N_{j+1} \text{ is even}) \text{ or}\\
		&\hfill (N_{j-1} \text{ is even and }N_j=N_{j+1}=\{0\}),\\
		\frac{1}{2}& \text{if either } (N_{j-1}=N_j=0 \text{ and } N_{j+1} \text{ is odd}) \text{ or}\\
		&\hfill (N_{j-1} \text{ is odd and }N_j=N_{j+1}=\{0\}).
	\end{cases}
	\]
	
	In our case, for $L_2\cong \begin{pmatrix}
		0 & 1\\
		1 & 0
	\end{pmatrix} \perp \langle 2^m\varepsilon\rangle$ with $m\ge 1$,
	\begin{gather*}
		n_0=2, \ n_m=1, \ n_j=0 \text{ for any } j\neq 0,m \ \implies \ w=m\cdot 1\cdot \{0+(1+1)/2\} = m \\
		q_0=0, \ q_m=1, \ q_j=0 \text{ for any } j\neq 0,m \ \implies \ q=1,\\
		P=P\left(\frac{1}{2}\text{rank}N_0(e)\right)=P(1)=1-\frac{1}{2^2}=\frac{3}{4}.
	\end{gather*}
	\begin{gather*}
		E_{\leq -2}=1,\ E_{-1}=\frac{1}{2}(1+1\cdot 2^0)=1, \  E_0=\begin{cases}
			\frac{1}{2} & \text{if } m=1,\\
			\frac{3}{4} & \text{if } m\ge2,
		\end{cases}, \ 
		E_1=\begin{cases}
			\frac{1}{2}(1+1\cdot 2^0) = 1 & \text{if }m=1,\\
			\frac{1}{2} & \text{if } m=2,\\
			\frac{1}{2}(1+1\cdot 2^0) = 1 & \text{if }m\geq3,
		\end{cases}\\
		E_{m-1} = \frac{1}{2}, \ E_m = \frac{1}{2}(1+1\cdot 2^0)=1, \ E_{m+1}=\frac{1}{2}, \ E_{\geq m+2} = 1.	
	\end{gather*}
	Hence, if $m=1$, then
	\[
	E_{\leq-1}=1, \ E_0=\frac{1}{2},\ E_1=1,\ E_2=\frac{1}{2},\ E_{\geq 3}=1 \ \implies E=\pfrac{1}{4}^{-1}.
	\]
	If $m=2$, then
	\[
	E_{\leq-1}=1, \ E_0=\frac{3}{4},\ E_1=\frac{1}{2},\ E_2=1,\ E_3=\frac{1}{2},\ E_{\geq 4}=1 \ \implies E=\pfrac{3}{16}^{-1}.
	\]
	If $m\geq 3$, then
	\[
	E_{\leq-1}=1, \ E_0=\frac{3}{4},\ E_{[1,m-2]}=1,\ E_{m-1}=\frac{1}{2},\ E_m=1,\ E_{m+1}=\frac{1}{2},\ E_{\geq m+2}=1 \ \implies E=\pfrac{3}{16}^{-1}.
	\]
	Thus, for $m\ge 2$ we have
	\[
	\beta_2(L,L)=2^{w-q}PE= 2^{m-1} \cdot \frac{3}{4} \cdot \frac{16}{3} = 2^{m+1}
	\]
	On the other hand, since $K_2\cong \begin{pmatrix}
		0 & 1\\
		1 & 0
	\end{pmatrix} \perp \langle 2^{m-1}\varepsilon\rangle$ with $m\ge2$, we may apply the above variables by putting $m\mapsto m-1$ to compute 
	\[
	\beta_2(K,K)=2^{w-q}PE=2^{(m-1)-1}\cdot \frac{3}{4}\cdot \begin{cases}
		4 & \text{if } m=2,\\
		\frac{16}{3} & \text{if } m\ge3,\\
	\end{cases}
	=\begin{cases}
		3 & \text{if } m=2,\\
		2^m & \text{if } m\ge 3.
	\end{cases}
	\]
\end{extradetails}
By Theorem \ref{Minkowski-Siegel}, we have 
	\[
	\frac{w(K)}{w(L)}=\frac{(d_K)^2}{(d_L)^2}\cdot \frac{\a_2(L,L)}{\a_2(K,K)}=\frac{1}{2^2}\cdot\frac{\beta_2(L,L)}{\beta_2(K,K)}=\begin{cases}
		\frac{2}{3} & \text{if } m=2,\\
		\frac{1}{2} & \text{if } m\ge 3.
	\end{cases}
\]
Thus, we have $c\cdot\frac{w(K)}{w(L)}=2$. Plugging this into \eqref{eqn:reductionp=2} completes the proof of the theorem.\qedhere
\end{proof}

We are finally ready to prove Theorem \ref{thm:sumclassnumapproximate}. 
\begin{proof}[Proof of Theorem \ref{thm:sumclassnumapproximate}]
Again using \eqref{eqn:DukeSchulzePillot}, it remains to show that 
\begin{equation}\label{eqn:reductiongenusformula}
r(n,\gen(L))=c_L(n)\sum_{f\mid 2d_L} a_{L,f}(n)\cdot H\pfrac{4s_Ln}{f^2},
\end{equation}
where $c_L(n)$ and $a_{L,f}(n)$ are constants depending only on the residue class of $n$ modulo some positive integer $M$.
Moreover, we may assume that $L$ is primitive.
For any ternary lattice $L$, define 
$$
\mathfrak{D}_L=2^{\delta_{2\notin\mathcal{P}} \cdot \ord_2(d_L)} \cdot \prod_{p\mid d_L, \ p\,\text{odd}} p^{\ord_p(d_L)-1},
$$
and let $k_L$ denote the number of prime divisors (counting with multiplicity) of $\mathfrak{D}_L$.
Note that $k_L\ge 0$, $k_L=0$ if and only if $L$ is stable, and $p\mid \mathfrak{D}_L$ if and only if $p\mid 2d_L$ and $p \notin \mathcal{P}$.

We prove the theorem by induction on $k_L$. When $k_L=0$, that is, when $L$ is a stable lattice, we directly obtain \eqref{eqn:reductiongenusformula}  from Theorem \ref{mainthm} (see also \eqref{formula-stable}).
Precisely, if $L$ is odd, then noting that $\mathfrak{P}=2d_L$, $e_L=4$, and $s_L=d_L$, we may take 
$$
c_L(n)=12\prod_{p\mid \mathfrak{P}} \left(p+S_p^\ast(L)\right)^{-1} \quad \text{and} \quad a_{L,f}(n)=f\cdot \prod_{p\mid f} S_p^\ast(L) ,
$$
which are clearly independent of $n$.
If $L$ is even, we put $c_L(n)$ as above and take 
$$
a_{L,f}(n)=\frac{f}{2} \cdot \prod_{p\mid f} S_p^\ast(L) \text{ if $\ord_2(f)=1$ and $a_{L,f}=0$ otherwise.}
$$
Noting that $\mathfrak{P}=d_L/2$, $e_L=1$, $s_L=d_L$, and $S_2^\ast(L)=1$, one may easily observe that the right-hand side of \eqref{eqn:reductiongenusformula} with these constants is equal to that of \eqref{formula-stable}.

Now, let $k\in\N$ be given and assume that \eqref{eqn:reductiongenusformula} is valid for any ternary lattice $L$ with $k_L<k$.
Let $L$ be a ternary lattice with $k_L=k$ and let $\n(L)=N\Z$ for $N\in\{1,2\}$. We split the argument depending on the divisibility of $n$ by primes dividing $\mathfrak{D}_{L}$. 

First suppose that $n\in\N$ satisfies $n\in N\Z_p^\times$ for any $p\mid \mathfrak{D}_L$. Then $n\in\N\Z_p^{\times}$ for any  $p\mid 2d_L$ with $p\notin \mathcal{P}$, and we may apply Theorem \ref{formula-n,P-coprime}.  Noting that $\frac{2}{\sqrt{e}}\mathfrak{P} \mid 2d_L$, where $e$ is the constant defined in Theorem \ref{formula-n,P-coprime}, Theorem \ref{formula-n,P-coprime} and Remark \ref{rmk-n,P-coprime} implies that there are $m_1\in\N$ and constants $c^{(1)}_L(n)$ and $a^{(1)}_{L,f}(n)$ for any $f\mid 2d_L$ depending only on $n$ modulo $m_1$ satisfying \eqref{eqn:reductiongenusformula} with $c_L(n)=c^{(1)}_L(n)$ and $a_{L,f}(n)=a^{(1)}_{L,f}(n)$ for any $n$ such that $n\in N\Z_p^\times$.

Now, we fix a prime $p\mid \mathfrak{D}_L$ and consider the case when $n\in pN\Z_p$. We claim that there are $m_p\in\N$ and constants $c^{(p)}_L(n)$ and $a^{(p)}_{L,f}(n)$ for any $f\mid 2d_L$ dependent only on the residue class of $n$ modulo $m_p$ satisfying \eqref{eqn:reductiongenusformula} with $c_L(n)=c^{(p)}_L(n)$ and $a_{L,f}(n)=a^{(p)}_{L,f}(n)$ for any $n$ such that $n\in pN\Z_p$.
The proof of the claim will be done case-by-case, relying on reduction formulas in Theorem \ref{reduction-formulas}.
Since the proofs are similar for each cases, we only provide the proof for the case when $p$ is an odd prime.

Assume that $p$ is an odd prime. Since $p\mid \mathfrak{D}_L$, $L$ is not stable at $p$. Hence by Theorem \ref{reduction-formulas}
\begin{equation}\label{eqn:reductongenusformula-odd-prime}
r(n,\gen(L))=
\begin{cases}
	r(n/p,\gen(L')) & \text{if $M_p$ is anisotropic}, \mathfrak{s}(U_p)=p\Z_p,\\
	r(n/p^2,\gen(L')) & \text{if $M_p$ is anisotropic}, \mathfrak{s}(U_p)\subseteq p^2\Z_p,\\
	r(n/p,\gen(K))-r(n/p^2, \gen(L')) & \text{if $M_p$ is isotropic},
\end{cases}
\end{equation}
where $L'$ is the lattice $\lambda_p(L)$ obtained by  Watson transformation, $K$ is a lattice defined in Theorem \ref{reduction-formulas} \ref{reduction-formulas:3}, and $L_p=M_p\perp U_p$ is a Jordan decomposition of $L_p$ with the leading Jordan component $M_p$. Note that $d_{L'}=d_L/p$ in the first case, $d_{L'}=d_L/p^2$ or $d_L/p^4$ in the second case, and $d_{L'}=d_L/p^2$ and $d_K=d_L/p$ in the third case.
Moreover, $s_{L'}=s_L/p$ or $ps_L$ in the first case, $s_{L'}=s_L$ in the other cases, and $s_{K}=s_L/p$ or $ps_L$. In particular, $k_{L'}, k_K<k_L=k$ for every term on the right-hand side. By the induction hypothesis, we obtain a formula of the form \eqref{eqn:reductiongenusformula} for each term on the right-hand side of \eqref{eqn:reductongenusformula-odd-prime}.

As an example, for the first case with $s_{L'}=s_L/p$, there are $m_p'\in \N$ and constants $c_{L'}(n')$ and $a_{L',f'}(n')$ for any $f'\mid 2d_{L'}=2d_L/p$ depending only on the residue class $n'$ modulo $m_p'$ such that
$$
\begin{aligned}
r(n/p,\gen(L'))&=c_{L'}(n/p)\sum_{f'\mid 2d_{L'}} a_{L',f'}(n/p)\cdot H\pfrac{4s_{L'}(n/p)}{f'^2}\\
&=c_{L'}(n/p)\sum_{(pf')\mid 2d_L} a_{L',f'}(n/p)\cdot H\pfrac{4s_{L}n}{(pf')^2}.	
\end{aligned}
$$
Put $c^{(p)}_L(n)=c_{L'}(n/p)$, and for any $f\mid 2d_L$ put $a^{(p)}_{L,f}(n)=a_{L',f/p}(n/p)$ if $p\mid f$ and $a^{(p)}_{L,f}(n)=0$ otherwise.
Then these constants $c^{(p)}_L(n)$ and $a^{(p)}_{L,f}(n)$ are dependent only on the residue class of $n$ modulo $m_p=p\cdot m_p'$, and one may observe that \eqref{eqn:reductiongenusformula}  holds for any $n$ such that $n\in pN\Z_p$ with $c_L(n)=c^{(p)}_L(n)$ and $a_{L,f}(n)=a^{(p)}_{L,f}(n)$. 
Other cases may also be deduced in the same manner, and we leave the details for the remaining cases to the reader.

Finally, let $\mathcal{S}=\{1\}\cup \{ \text{prime divisors of }\mathfrak{D}_L\}=\{p_0=1,p_1,\ldots,p_h\}$, and for any $p\in\mathcal{S}$, let $m_p\in\N$, $c^{(p)}_L(n)$ and $a^{(p)}_{L,f}(n)$ for any $f\mid 2d_L$ be constants defined above.
Let $M$ be the least common multiple of $m_1$ and $m_p$ for all $p\mid \mathfrak{D}_L$.
Clearly, constants $c^{(p)}_L(n)$ and $a^{(p)}_{L,f}(n)$ are dependent only on the residue class of $n$ modulo $M$.
Define constants $c_L(n)$ and $a_{L,f}(n)$ recursively as follows:
\begin{enumerate}
	\item for any $n$ with $n\in N\Z_p^\times$ for all $p\mid \mathfrak{D}_L$, define $c_L(n)=c^{(1)}_L(n)$, $a_{L,f}(n)=a^{(1)}_{L,f}(n)$, 
	\item for $1\le i \le h$, and for any $n$ with $n\in p_iN\Z_{p_i}$ and $n\notin p_jN\Z_{p_j}$ for all $1\le j \le i-1$, define $c_L(n)=c^{(p_i)}_L(n)$, $a_{L,f}(n)=a^{(p_i)}_{L,f}(n)$.
\end{enumerate}
Then by the construction of the constants, $c_L(n)$ and $a_{L,f}(n)$ satisfies \eqref{eqn:reductiongenusformula}. This completes the proof of the theorem. 
\end{proof}
\begin{remark}
	In practice, one would better find the constants $c_L(n)$ and $a_{L,f}(n)$ depending on the condition $n\in \left(\prod_{q\mid \mathfrak{D}_L}q^{e_q}\right)N\Z_p^\times$ $(N\Z=\n(L))$ for all primes $p\mid \mathfrak{D}_L$ for some powers $e_q\in\N$. One may use the reduction formulas in Theorem \ref{reduction-formulas} recursively until one can apply Theorem \ref{mainthm} or Theorem \ref{formula-n,P-coprime}.
	This should always work by the ``in particular"-part of Theorem \ref{reduction-formulas}.
	The existence of $M\in\N$ also follows from this since the formula for $r(n,\gen(L))$ will eventually be reduced to certain linear combination of formulas for stable lattices for any $n\in \left(\prod_{q\mid \mathfrak{D}_L}q^{e_q}\right)\N$ for sufficiently large $e_q\in \N$.
\end{remark}
\section{Examples}\label{sec:examples}
In this section, we provide several examples of how to use Theorems \ref{mainthm}, \ref{formula-n,P-coprime}, and \ref{reduction-formulas} to obtain a formula for $r(n,\gen(L))$.
\subsection{Example with stable lattices}

\begin{example}
We first give an example for a stable lattice. Let 
$$
L=\la 1,3,5 \ra.
$$
Clearly, $L$ is a stable lattice, hence we may apply Theorem \ref{mainthm}. Note that $d_L=15$,
$$
\mathcal{P}_L=\{2,3,5\} \quad \text{and} \quad \mathfrak{P}_L=2^{-0}\cdot 2\cdot 3\cdot 5 = 30.
$$
Also, $e=4$ since $L$ is odd, and for each $p\mid \mathfrak{P}$, we have
$$
S_2^\ast(L)=1,\quad S_3^\ast(L)=1,\quad \text{and} \quad S_5^\ast(L)=-1.
$$
Therefore, by Theorem \ref{mainthm}, we have
$$
\begin{array}{rl}
	r(n,\gen(L))&=12\cdot \frac{1}{2+1}\cdot\frac{1}{3+1}\cdot\frac{1}{5-1} \times \sum_{f\mid 30} \left[ \prod_{p\mid f}S_p^\ast(L)\right]\cdot f\cdot H\pfrac{60n}{f^2}\\ [5pt]
	&=\frac{1}{4} \big(H(60n)+2H(15n)+3H\pfrac{20n}{3}+6H\pfrac{5n}{3} \\ [5pt]
	&\hfill  -5H\pfrac{12n}{5}-10H\pfrac{3n}{5}-15H\pfrac{4n}{5}-30H\pfrac{n}{15} \big).
\end{array}
$$	
\end{example}

\begin{example}
	We provide another example for a stable lattice, which is even and non-diagonal. Let 
	$$
	L=\begin{pmatrix}
		2&1&0\\
		1&2&1\\
		0&1&4
	\end{pmatrix}.
	$$
	Note that $d_L=10$ and $L$ is even, and hence it is a stable lattice. Hence $e=1$,
	$$
	\mathcal{P}_L=\{2,5\} \quad \text{and} \quad \mathfrak{P}_L=2^{-1}\cdot 2 \cdot 5 = 5.
	$$
	Moreover, $L_5 \cong \la 1,2, 10\ra$ over $\Z_5$, hence $S_5^\ast(L)=-1$.
	Therefore, by Theorem \ref{mainthm}, we have
	\begin{center}
	$r(n,\gen(L))=12\cdot\frac{1}{5-1} \times \sum_{f\mid 5} \left[ \prod_{p\mid f}S_p^\ast(L)\right]\cdot f\cdot H\pfrac{10n}{f^2}=3 \left( H(10n)-5H\pfrac{2n}{5} \right)$.		
	\end{center}
\end{example}
\subsection{An example of formulas for arbitrary lattices}
We split Theorem \ref{formula-n,P-coprime} into further cases for ease of use in our examples. Namely, Theorem \ref{formula-n,P-coprime} may be written as 
	\begin{equation*}
		\begin{aligned}
			r(&n,\gen(L))=\prod_{\substack{p\mid 2d_L \\ p\not\in \mathcal{P}}}\a_p(n,L)\left(1-\legendre{d}{p}\frac{1}{p}\right) \left(1-\frac{1}{p^2}\right)^{-1} \\
			&\times
			\begin{cases}
				\frac{24(s,n,2)}{st} \left[\prod_{p\in\mathcal{P}} \frac{p}{p+S_p^\ast(L)}\right] \cdot {\displaystyle\sum_{f\mid \mathfrak{P}}} \left[\prod_{p\mid f}S_p^\ast(L)\right]\cdot f \cdot H\pfrac{sn}{((s,n,2)f)^2}  & \text{if $2\not\in\mathcal{P}$, $\frac{sn}{(s,n)^2}\equiv 3 \pmod{4}$},\\
				
				\frac{12(s,n,2)}{st} \left[\prod_{p\in\mathcal{P}} \frac{p}{p+S_p^\ast(L)}\right] \cdot {\displaystyle\sum_{f\mid \mathfrak{P}}} \left[ \prod_{p\mid f}S_p^\ast(L)\right] \cdot f \cdot H\pfrac{4sn}{((s,n,2)f)^2}  & \text{if $2\not\in\mathcal{P}$, $\frac{sn}{(s,n)^2}\not\equiv 3 \pmod{4}$},\\
				
				\frac{12}{st} \left[\prod_{p\in\mathcal{P}} \frac{p}{p+S_p^\ast(L)}\right] \cdot {\displaystyle\sum_{f\mid \mathfrak{P}}} \left[ \prod_{p\mid f}S_p^\ast(L)\right]\cdot f \cdot H\pfrac{4sn}{f^2}  & \text{if $2\in\mathcal{P}$, $\ord_2(d_L)=0$},\\
				
				\frac{48}{st} \left[\prod_{p\in\mathcal{P}\setminus \{2\}} \frac{p}{p+S_p^\ast(L)}\right] \cdot {\displaystyle\sum_{f\mid \mathfrak{P}/2}} \left[ \prod_{p\mid f}S_p^\ast(L)\right]\cdot f \cdot H\pfrac{sn}{f^2}  & \text{if $2\in \mathcal{P}$, $\ord_2(d_L)=1$.}
			\end{cases}
		\end{aligned}
	\end{equation*}

\begin{example}\label{ex:arbitrary}
 Let $n$ be a positive integer and let
$$
L=\la 1,1,75 \ra.
$$
Note that $4d_L=300$, $L$ is odd with $\ord_2(d_L)=0$, and $L$ is stable at prime numbers $2$ and $3$, hence
$$
s=3, \quad t=10, \quad \mathcal{P}_L=\{2,3\}, \quad \mathfrak{P}_L=2^{-0}\cdot 2\cdot 3 = 6, \quad \text{and} \quad (\epsilon,e)=(12,4).
$$
Moreover, $S_2^\ast(L)=1$ and $S_3^\ast(L)=-1$.

Assume that $(n,5)=1$. Noting that $\legendre{d}{5}=\legendre{-sn}{5}=-\legendre{n}{5}$ and by Remark \ref{rmk-n,P-coprime}, we have
$$
c_5(n,L)=\left(1-\frac{1}{5}\right)\cdot \left(1+\legendre{n}{5}\frac{1}{5}\right)\cdot \left(1-\frac{1}{5^2}\right)^{-1}
=\begin{cases}
	1 & \text{if } n\equiv \pm 1 \pmod{5},\\
	\frac{2}{3} & \text{if } n\equiv \pm 2 \pmod{5}.
\end{cases}
$$
Thus, by Theorem \ref{formula-n,P-coprime}, we have
\begin{equation}\label{example-1,1,75-eq0}
\begin{aligned}
	r(n,\gen(L))&=\frac{12}{3\cdot 10}\cdot \frac{2}{2+1}\cdot \frac{3}{3-1}\cdot c_5(n,L)\times {\sum_{f\mid 6}} \left[ \prod_{p\mid f}S_p^\ast(L)\right]\cdot f \cdot H\pfrac{12n}{f^2}\\
	&=c_L(n)\left( H(12n)+2H(3n)-3H\pfrac{4n}{3}-6H\pfrac{n}{3}\right),
\end{aligned}
\end{equation}
where $c_L(n)=\frac{2}{5}$ if $n\equiv \pm 1 \pmod{5}$, and $c_L(n)=\frac{4}{15}$ if $n\equiv \pm 2 \pmod{5}$.

Now assume that $n\equiv 0 \pmod{5}$ and write $n=5n_0$ for some $n_0\in\N$. Note that 
\begin{center}
	$L_5\cong \la 1,-1, 3\cdot 5^2\ra \cong \left(\begin{smallmatrix} 0&1\\1&0 \end{smallmatrix}\right)\perp \la 3\cdot 5^2 \ra$ over $\Z_5$.
\end{center}
By Theorem \ref{reduction-formulas} \ref{reduction-formulas:3}, 
\begin{equation}\label{example-1,1,75-eq1}
r(n,\gen(L))=r(5n_0,\gen(L))=2r(n_0,\gen(K))-r(n_0/5,\gen(\lambda_5(L))).
\end{equation}
Here, $\lambda_5(L)=\la 1,1,3 \ra$ and $K$ is a lattice with $d_K=15$ whose genus is characterized by
\begin{center}
$K_5\cong \left(\begin{smallmatrix} 0&1\\1&0 \end{smallmatrix}\right)\perp \la 3\cdot 5 \ra$ over $\Z_5$ and $K_p\cong L_p$ for any $p\neq 5$.
\end{center}
Indeed, $K$ may explicitly be chosen as $\la 1,1,15\ra$, which is a lattice obtained by scaling one of two sublattices of $L$ of index $5$ and of norm ideal $5\Z$ by $\frac{1}{5}$.
Note that both $K$ and $\lambda_5(L)$ are stable lattices. Hence by Theorem \ref{mainthm}, we may have
$$\begin{array}{rr}
\multicolumn{2}{l}{r(n_0,\gen(K))=\frac{1}{3} \big( H(60n_0)+2H(15n_0)-3H\pfrac{20n_0}{3}-6H\pfrac{5n_0}{3}}\\ [5pt]
&+ 5H\pfrac{12n_0}{5}+10H\pfrac{3n_0}{5}-15H\pfrac{4n_0}{15}-30H\pfrac{n_0}{15}\big)\\ [5pt]
\multicolumn{2}{c}{=\frac{1}{3} \left( H(12n)+2H(3n)-3H\pfrac{4n}{3}-6H\pfrac{n}{3} + 5H\pfrac{12n}{25}+10H\pfrac{3n}{25}-15H\pfrac{4n}{75}-30H\pfrac{n}{75}\right),}
\end{array}
$$
and
$$
\begin{array}{rl}
r(n_0/5,\gen(\lambda_5(L)))&=2 \left( H(12n_0/5)+2H(3n_0)-3H\pfrac{4n_0/5}{3}-6H\pfrac{n_0/5}{3} \right)\\ [5pt]
&=2 \left( H\pfrac{12n}{25}+2H\pfrac{3n}{25}-3H\pfrac{4n}{75}-6H\pfrac{n}{75}\right).
\end{array}
$$
Plugging the two formulas above into \eqref{example-1,1,75-eq1}, we obtain a formula for $n\equiv 0\pmod{5}$.
Combining this together with \eqref{example-1,1,75-eq0}, we obtain overall that 
$$
\begin{array}{lllr}
\multicolumn{3}{l}{r(n,\gen(L))=c_L(n)\cdot\big( H(12n)+2H(3n)-3H\pfrac{4n}{3}-6H\pfrac{n}{3}}&\hspace{4cm}\\ [5pt]
&&\multicolumn{2}{r}{+ 2H\pfrac{12n}{25}+4H\pfrac{3n}{25}-6H\pfrac{4n}{75}-12H\pfrac{n}{75}\big),}
\end{array}
$$
where $c_L(n)=\frac{2}{5}$, $\frac{4}{15}$, or $\frac{2}{3}$ if $n\equiv \pm 1 \pmod{5}$, $n\equiv \pm 2 \pmod{5}$, or $n\equiv 0 \pmod{5}$, respectively.
\end{example}

\end{document}